\documentclass[11pt]{article}

\usepackage{amssymb}   
\usepackage{amsthm}    
\usepackage{amsmath}   
\usepackage{stmaryrd}  
\usepackage{titletoc}  
\usepackage{mathrsfs}  
\usepackage{graphicx}
\usepackage{xcolor}
\usepackage[hidelinks,bookmarks=false]{hyperref}

\newlength{\defbaselineskip}
\newcommand{\setlinespacing}[1]%
           {\setlength{\baselineskip}{#1 \defbaselineskip}}

\theoremstyle{plain}
\newtheorem{thm}{Theorem}[section]

\newtheorem{lem}[thm]{Lemma}
\newtheorem{prop}[thm]{Proposition}

\theoremstyle{definition}
\newtheorem{defn}{Definition}[section]

\newtheorem{rmk}{Remark}[section]

\newcommand{\eps}{\varepsilon}

\DeclareMathOperator*{\esssup}{esssup}
\DeclareMathOperator*{\essinf}{essinf}

\newcommand{\cO}{\mathcal{O}}

\newcommand{\cL}{\mathcal{L}}
\newcommand{\cT}{\mathcal{T}}

\newcommand{\cB}{\mathcal{B}}
\newcommand{\cA}{\mathcal{A}}
\newcommand{\cS}{\mathcal{S}}

\newcommand{\cG}{\mathcal{G}}

\newcommand{\cU}{\mathcal{U}}

\newcommand{\bH}{\mathbb{H}}
\newcommand{\bP}{\mathbb{P}}
\newcommand{\bR}{\mathbb{R}}

\newcommand{\sF}{\mathscr{F}}
\newcommand{\sP}{\mathscr{P}}

\makeatletter\@addtoreset{equation}{section} \makeatother
 \allowdisplaybreaks
\begin{document}

\title{Existence and Uniqueness of Viscosity Solutions for Stochastic Hamilton--Jacobi--Bellman Equations\footnotemark[1] 
}

\author{Jinniao Qiu\footnotemark[2]  }
\footnotetext[1]{This work was partially supported by the National Science and Engineering Research Council of Canada. }
\footnotetext[2]{Department of Mathematics and Statistics, University of Calgary, 2500 University Drive NW, Calgary, AB T2N 1N4, Canada. \textit{E-mail}: \texttt{jinniao.qiu@ucalgary.ca}. }
 
\maketitle

\begin{abstract}
We study stochastic Hamilton--Jacobi--Bellman equations arising from stochastic optimal control problems with random coefficients and possible degenerateness. 
The viscosity solution is reformulated by using adapted smooth random test fields that touch on stopped space--time neighborhoods   almost surely.
 The state coefficients are predictable, with no continuity assumption
on their dependence on the underlying Brownian path.
Under linear growth of locally Lipschitz drift and diffusion coefficients, 
polynomial growth of the running and terminal cost coefficients, 
and a nonempty separable metric space-valued control set, 
we show that the value function is the unique viscosity solution
in this formulation among adapted continuous fields with
square-integrable random polynomial growth bounds.
The comparison proof constructs adapted fields from controlled
trajectories and spatially shaken conditional values, using a weak
semimartingale decomposition and regularization of its positive
finite-variation part.
In particular, the uniqueness allows for diffusion coefficients being random, degenerate, and depending on time, state, and control, 
whereas earlier uniqueness results of Qiu [SIAM J. Control Optim. 56 (2018), pp. 3708--3730] and Qiu and Zhang [Ann. Appl. Probab. 33 (2023), pp. 889--930]
were obtained under additional structural restrictions on the diffusion coefficient, 
such as determinism combined with either superparabolicity or independence of the state and controls.

\end{abstract}

{\bf Mathematics Subject Classification (2020):}  49L20, 49L25, 93E20, 35D40, 60H15

{\bf Keywords:} stochastic Hamilton-Jacobi-Bellman equation, optimal stochastic control, stochastic partial differential equation, viscosity solution, comparison principle

\section{Introduction}

Stochastic Hamilton--Jacobi--Bellman (HJB) equations arise through
dynamic programming for optimal control problems with random
coefficients, which was first introduced by Peng \cite{Peng_92}. In this pioneering paper, Peng also established
the existence and uniqueness of solutions for a semilinear class of such stochastic HJB equations with
control-independent diffusion coefficients. The general case, however, is fully nonlinear. 
He subsequently identified the development of a generalized notion of viscosity solution as an open problem
\cite[p.~268]{peng1999open}, and revisited the question of general
existence and uniqueness in his 2010 International Congress of
Mathematicians plenary lecture \cite[p.~399]{peng2011backward}.
Building on the framework introduced in \cite{Qiu2018viscosity}, 
the present paper develops a comprehensive theory of existence and uniqueness 
for viscosity solutions of fully nonlinear stochastic HJB equations.


Let $(\Omega,\sF,\bP)$ be a complete probability space carrying an
$m$-dimensional Wiener process $W$, and let $(\sF_t)_{t\geq0}$ be
its augmented natural filtration. We write $E_{\sF_t}$ for
conditional expectation. Fix a terminal time $T\in(0,\infty)$ and
a nonempty separable metric control space $U$, equipped with its
Borel sigma-algebra. The admissible class $\cU$ consists of all
$U$-valued progressively measurable controls. We consider the
optimal control problem
\begin{align}
\inf_{\theta\in\cU}E\left[\int_0^T f(s,X_s,\theta_s)\,ds+G(X_T)\right]
\label{Control-probm}
\end{align}
subject to
\begin{equation}\label{state-proces-contrl}
\left\{
\begin{aligned}
dX_t&=\beta(t,X_t,\theta_t)\,dt+\sigma(t,X_t,\theta_t)\,dW_t,
       &&t\in[0,T],\\
X_0&=x.
\end{aligned}
\right.
\end{equation}
The coefficients $\beta,\sigma,f$ and the terminal cost $G$ may
depend on $\omega\in\Omega$; this dependence is suppressed in the
notation. For $t\ge r$, let $X_t^{r,x;\theta}$ denote the state
starting from $x$ at time $r$ under the control $\theta$. The
dynamic cost and value function are
\begin{align}
J(t,x;\theta)
 &=E_{\sF_t}\left[\int_t^T
 f(s,X_s^{t,x;\theta},\theta_s)\,ds+G(X_T^{t,x;\theta})\right],
 \label{eq-cost-funct}\\
V(t,x)&=\essinf_{\theta\in\cU}J(t,x;\theta),
       \qquad (t,x)\in[0,T]\times\bR^d.
\label{eq-value-func}
\end{align}
For background on stochastic control and dynamic programming, see
\cite{Peng-DPP-1997,yong-zhou1999stochastic}.

The associated dynamic programming equation is the stochastic
HJB equation, as introduced by Peng \cite{Peng_92}, of the following form:
\begin{equation}\label{SHJB}
\left\{
\begin{aligned}
-du(t,x)
 &=\mathbb H(t,x,Du,D^2u,D\psi)\,dt-\psi(t,x)\,dW_t,
 &&(t,x)\in Q:=[0,T)\times\bR^d,\\
u(T,x)&=G(x),&&x\in\bR^d,
\end{aligned}
\right.
\end{equation}
where
\begin{align*}
\mathbb H(t,x,p,A,B)
=\essinf_{v\in U}\bigg\{
\operatorname{tr}\left(\frac12\sigma\sigma'(t,x,v)A
                         +\sigma(t,x,v)B\right)
 +\beta'(t,x,v)p+f(t,x,v)\bigg\},
\end{align*}
for $(p,A,B)\in\bR^d\times\bR^{d\times d}\times\bR^{m\times d}$, and both $u$ and $\psi$
are unknown adapted random fields. 
Examples in applications include utility maximization with habit formation
\cite{EnglezosKaratzas09}, swing option pricing
\cite{bender2016-first-order}, and mean field games
\cite{cardaliaguet2015master}. 

The central difficulty in developing a viscosity theory for \eqref{SHJB} is the presence of the martingale term $\psi\,dW_t$ whose integrand
 $\psi$ is itself unknown. Moreover, 
  $dW_t$ is understood only as notation for integration with respect to Brownian motion in the It\^o sense; 
  it does not denote an ordinary differential or derivative. Thus, to overcome this difficulty, an equivalent formulation 
  was introduced in \cite{Qiu2018viscosity}. Intuitively, one may first 
  write the unknown random filed $u$ in the form of an It\^o process:
\begin{align}
u(t,x)=u(T,x)-\int_t^T\mathfrak d_su(s,x)\,ds
                  -\int_t^T\mathfrak d_\omega u(s,x)\,dW_s.
\label{SDE-u}
\end{align}
Then the uniqueness of the semimartingale decomposition determines its drift and
martingale coefficients up to $d\bP\otimes dt$-null sets, which makes sense of the two linear operators $\mathfrak d_t$ and $\mathfrak d_\omega$. 
Related weak functional calculus  may be found in \cite{cont2013functional,Leao-etal-2018}.
An immediate comparison with \eqref{SHJB} yields that $\psi=\mathfrak d_\omega u$, and that solving \eqref{SHJB} for the pair $(u,\psi)$ is equivalent to 
solving for $u$ the following equation: 
\begin{equation}\label{SHJB-eqiv}
\left\{\begin{aligned}
-\mathfrak d_tu(t,x)
-\mathbb H(t,x,Du(t,x),D^2u(t,x),D\mathfrak d_\omega u(t,x))&=0,
                 &&(t,x)\in Q,\\
u(T,x)&=G(x),&&x\in\bR^d.
\end{aligned}\right.
\end{equation}

The equivalence between \eqref{SHJB} and \eqref{SHJB-eqiv}, 
in which the martingale term is absorbed into the operator $\mathfrak d_\omega$,
 provides a natural foundation for the viscosity formulation. 
 Its development nevertheless raises two challenges: 
 the unknown fields must satisfy nonanticipativity constraints, 
 and the coefficients are only measurable in $\omega$, 
 with no topology imposed on $(\Omega,\sF)$. Consequently, 
 These challenges 
prevent us from defining the viscosity solutions 
in a point-wise manner. 
In \cite{Qiu2018viscosity}, a class of random fields of form \eqref{SDE-u}, with sufficient spacial regularity,
 are used as test functions. 
and the contact classes of test functions are parameterized
by  a stopping time $\tau$, an $\bR^d$-valued $\sF_{\tau}$-measurable variable $\xi$, 
and an event $\Omega_{\tau}\in\sF_{\tau}$. Under this framework, 
 the value function is shown to be a viscosity solution 
 for bounded Lipschitz data, and the maximality is established
  under additional regularity assumptions. 
  The uniqueness
theorem in that work assumes, among other conditions, a
deterministic and superparabolic diffusion coefficient. 
In a subsequent work 
on stochastic differential games \cite{QiuZhang2023games}  
a uniqueness result is proved, allowing for degeneracy when the diffusion is
deterministic and independent of the state and controls;   deterministic superparabolic diffusions are also  addressed
 under
further structural restrictions. More recently,  \cite{liang2026viscosity}  extended this framework
 to 
fully nonlinear stochastic HJB equations for controlled
 jump-diffusion processes, 
 while the uniqueness result (\cite[Theorem 5.14]{liang2026viscosity})
assumes that the diffusion coefficient is deterministic.

The requirement that the diffusion coefficient be deterministic excludes a range of potentially 
relevant applications. One example is portfolio optimization 
under rough volatility, a prominent modeling paradigm in finance (see, e.g., \cite{bayer2023rough}). 
The controlled wealth process may take the form 
(see, e.g., \cite{bauerle2020portfolio})
 \begin{align*}
dX_t
=
X_t(r_t+\theta_t\mu_t)\,dt
+
X_t \theta_t\sqrt{\mathbb V_t}\,dW_t, \text{ for } t\in[0,T]; \quad X_0=x_0,
\end{align*}
where $d=m=1$, $r_t$ is the interest rate, $\mu_t$ denotes the excess return, 
$\theta_t $ is the proportion of wealth invested in the risky asset, 
and $\mathbb V_t$ is the stochastic variance process. 
In typical rough-volatility models, 
$\mathbb V_t$ is driven by fractional Brownian motions with Hurst parameter $H\in(0,1/2)$ and
 is typically neither a semimartingale nor a Markov process. 
Consequently, the portfolio optimization problem is a typical non-Markovian control problem and features a 
random, potentially degenerate controlled diffusion coefficient. It therefore falls outside the scope of existing uniqueness results under deterministic-diffusion assumptions. 
Hence, it becomes interesting and significant to establish a uniqueness result for stochastic HJB equations with random, 
possibly degenerate diffusion coefficients that may depend on the state and control.      

 In this work, we establish both the existence
and uniqueness for stochastic HJB equations with random, possibly
degenerate diffusion coefficients depending on both the state and the
control. The state coefficients are predictable, with no continuity assumption
on their dependence on the underlying Brownian path.
Under $(\cA1)$, the state coefficients may be locally Lipschitz
with linear growth, the costs may have polynomial growth, and the control
space need only be separable. In the viscosity formulation of Definition~\ref{defn-viscosity},
the value function is the unique solution in
$\cS^2_{\mathrm{pol}}$, which allows square-integrable random
polynomial growth bounds
(Theorems~\ref{thm-existence} and~\ref{thm-uniqueness-A1}).
The comparison proof constructs adapted upper fields along controlled
trajectories and lower fields from spatially shaken values, using a weak
semimartingale decomposition and regularization of its positive
finite-variation part. Localization and polynomial barriers constructed
through backward stochastic differential equations (BSDEs) extend comparison from bounded auxiliary problems to the
 polynomial-growth setting. The resulting comparison argument is carried out directly with adapted
random fields; for approaches based on Markovian approximations, refer to
\cite{liang2026viscosity,Qiu2018viscosity,QiuZhang2023games}.

We would also note that the concerned stochastic HJB equation may also be seen
as a class of fully nonlinear backward stochastic partial differential equations (BSPDEs). 
The study of linear BSPDEs goes back to Pardoux's work on filtering
\cite{Pardoux1979}. Regularity and well-posedness for linear and
semilinear equations have been developed in
\cite{DuQiuTang10,DuTangZhang-2013,Hu_Ma_Yong02,Tang-Wei-2013}.
Their representations by forward--backward stochastic differential
equations and their connections with stochastic control are
studied in \cite{Du_Zhang_DegSemilin2012,ma2012non,Tang_98,Zhou_93}.
Related developments include quasilinear equations
\cite{QiuTangMPBSPDE11}, reflected equations
\cite{QiuWei-RBSPDE-2013}, and semilinear stochastic HJB equations in optimal controls and mean-field games
\cite{bensoussan2013mean,cardaliaguet2025mean,meng2023optimal,ye2025fractional}. A Sobolev
weak-solution theory for a special class of fully nonlinear
stochastic HJB equations was established in \cite{Qiu2014weak} and for duality method for

 For the deterministic theory of viscosity solutions,
we refer to
\cite{crandall2000lp,crandall1992user,juutinen2001definition,wang1992-II}.
Stochastic viscosity solutions for forward SPDEs are studied in
\cite{buckdahn2015pathwise,buckdahn2007pathwise,LionsSouganidis1998b}.
A fundamental distinction between these forward equations and backward
stochastic HJB equations such as \eqref{SHJB} concerns the martingale
term. In the former, the noise coefficient is prescribed as part of the
equation, whereas in the latter, the martingale integrand is part of the unknown
solution and is determined jointly with the solution field through
martingale representation.
Path-dependent PDEs provide another framework for non-Markovian control
problems when the coefficients $\beta,\sigma,f$ and $G$ are represented as
deterministic nonanticipative functionals of the state and driving Brownian motions, with the appropriate time and control arguments. For the viscosity
theory of nonlinear path-dependent PDEs, we refer to
\cite{ekren2014viscosity,ekren2016viscosity-1,peng2011note,tang2012path,zhou2023viscosity}.
In their path-dependent formulations, the uniqueness results in
\cite[Theorem~6.4]{ekren2016pseudo} and
\cite[Theorem~7.3]{zhou2023viscosity} allow possibly degenerate
diffusion depending on both the state and the control.
These results impose continuity or Lipschitz conditions on the
coefficients as functionals of the underlying paths.
The present formulation works directly with adapted random fields and
permits merely measurable dependence on $\omega\in\Omega$,
subject to the predictability and spatial regularity assumptions
in $(\cA1)$, with no topology imposed on $\Omega$.
 More recently, in \cite{bank2026duality},
 a duality method for stochastic control with non-Markovian random coefficients is developed, providing
 an alternative approach to characterizing the random value function.

The remainder of the paper is organized as follows. Section~2 introduces
the notation and standing assumptions. Section~3 establishes estimates
for the controlled state process, approximation and regularity properties
of the value function, and the dynamic programming principle.
Section~4 defines test function spaces and the viscosity solution, explore some properties,  and  proves
 the existence of the viscosity solution. 
 Section~5 constructs
upper and lower comparison fields and proves maximality and uniqueness
under $(\cA1)$ in the polynomial-growth class.
Appendix~A recalls the measurable selection theorem used in the proofs,
and Appendix~B gives the proof of Proposition \ref{prop-classical-viscosity}.

\section{Preliminaries}

{We denote by $\sP$ the $\sigma$-algebra of predictable
sets on $\Omega\times[0,T]$ associated with $(\sF_t)_{t\geq0}$.}

 We use $|\cdot|$ for the Euclidean norm of vectors and the Frobenius norm of matrices. For matrices $A,B\in\bR^{d\times d}$, we write $A:B=\operatorname{tr}(AB')$. 
 Define the parabolic distance in $\bR^{1+d}$ as follows:
$$\delta(X,Y):=\max\{ |t-s|^{1/2},|x-y| \},$$
 for $X:=(t,x)$ and $Y:=(s,y)\in \bR^{1+d}$. Denote by $Q^+_r(X)$ the hemisphere of radius $r>0$ and center $X:=(t,x)\in \bR^{1+d}$ with $x\in \bR^d$:
\begin{equation*}
  \begin{aligned}
    Q^+_r(X):=\,  [t,t+r^2)\times B_r(x), \quad
    B_r(x):=\, \{ y\in\bR^d:|y-x|<r \},
  \end{aligned}
\end{equation*}
and by $|Q^+_r(X)|$ the volume. Throughout this paper, we write $(s,y)\rightarrow (t^+,x)$, meaning that $s \downarrow t$ and $y\rightarrow x$.

Let $\mathbb B $ be a Banach space equipped with norm $\|\cdot\|_{\mathbb B }$. Banach-valued $L^p$ spaces are understood in the strongly measurable (Bochner) sense. For each $t\in[0,T]$, denote by $L^0(\Omega,\sF_t;\mathbb B)$ the space of $\mathbb B$-valued $\sF_t$-measurable random variables. For $p\in[1,\infty]$, $\cS ^p ({\mathbb B })$ is the set of all the ${\mathbb B }$-valued,
 $\sP$-measurable continuous processes $\{\mathcal X_{t}\}_{t\in [0,T]}$ such
 that
{\small $$\|\mathcal X\|_{\cS ^p({\mathbb B })}:= \left\|\sup_{t\in [0,T]} \|\mathcal X_t\|_{\mathbb B }\right\|_{L^p(\Omega,\sF,\bP)}< \infty.$$
}
 For $1\leq p<\infty$, denote by $\mathcal{L}^p({\mathbb B })$ the totality of all  the ${\mathbb B }$-valued,
  $\sP$-measurable processes $\{\mathcal X_{t}\}_{t\in [0,T]}$ such
 that
 {\small
 $$
 \|\mathcal X\|_{\mathcal{L}^p({\mathbb B })}:=\left\| \bigg(\int_0^T \|\mathcal X_t\|_{\mathbb B }^p\,dt\bigg)^{1/p} \right\|_{L^p(\Omega,\sF,\bP)}< \infty.
 $$
 }
For $p=\infty$, define $\mathcal L^\infty(\mathbb B)$ using the norm
$$\|\mathcal X\|_{\mathcal L^\infty(\mathbb B)}
:=\mathop{\mathrm{ess\,sup}}_{(\omega,t)\in\Omega\times[0,T]}
\|\mathcal X_t(\omega)\|_{\mathbb B},$$
where the essential supremum is with respect to $\bP\otimes dt$.
Obviously, $(\cS^p({\mathbb B }),\,\|\cdot\|_{\cS^p({\mathbb B })})$ and $(\mathcal{L}^p({\mathbb B }),\|\cdot\|_{\mathcal{L}^p({\mathbb B })})$
are Banach spaces.

For each $(k,q)\in \mathbb{N}_0\times [1,\infty]$ we define  the $k$-th Sobolev space $(H^{k,q},\|\cdot\|_{k,q})$ as usual, and for each domain $\cO\subset \bR^d$, denote by $C^k(\cO)$ the space of functions with the up to $k$-th order derivatives being bounded and continuous on $\cO$, $C^k_0(\cO) $ being the subspace of $C^k(\cO)$ vanishing on the boundary $\partial \cO$. On $\bR^d$, $C^k_0(\bR^d)$ denotes the functions in $C^k(\bR^d)$ whose derivatives of orders zero through $k$ vanish at spatial infinity. When $k=0$, write $C_0(\cO) $ and $C(\cO)$ simply. Through this paper, we define 
$$
\cS^p(C_{loc}(\bR^d))=\cap_{N>0} \cS^{p}(C(B_N(0))),\quad \text{ for } p\in[1,\infty].
$$

By convention, we treat elements of spaces like $\cS^p(H^{k,q})$ and $\cL^p(H^{k,q})$ as functions rather than distributions or classes of equivalent functions, and if a function of such class admits a version with better properties, we always denote this version by itself. For example, if $u\in \cL^p(H^{k,q})$ and $u$ admits a version lying in $\mathcal{S}^p(H^{k,q})$, we always adopt the modification $u\in \cL^p(H^{k,q})\cap \cS^p(H^{k,q})$.
\medskip

{
{Throughout the paper, we write $\varpi(x)=(1+|x|^2)^{1/2}$ for the spatial weight, with the same convention for scalar arguments.} For $R>0$, let $B_R=B_R(0)$ denote the open ball of radius $R$ centered at the origin, and define the radial projection $\pi_R:\bR^d\to\overline{B_R}$ by
\[
\pi_R(x)=
\begin{cases}
x, & |x|\le R,\\
R\dfrac{x}{|x|}, & |x|>R.
\end{cases}
\]

The standing assumption is
the following.

\medskip
\noindent$(\cA1)$
(i) The control space $U$ is a nonempty separable metric space with
its Borel sigma-algebra. The coefficients $\beta,\sigma,f$ are
$\sP\otimes\cB(\bR^d)\otimes\cB(U)$-measurable, and $G$ is
$\sF_T\otimes\cB(\bR^d)$-measurable. For almost every $(\omega,t)$,
$\beta,\sigma,f$ are continuous in $(x,v)$; $G$ is continuous in $x$
  a.s..

(ii) There is a deterministic $K<\infty$ such that
\[
 |\beta(t,x,v)|+{|\sigma(t,x,v)|}
 \le K{\varpi(x)}.
\]
For each $R<\infty$ there is a deterministic $L_R<\infty$ such that
\[
 |\beta(t,x,v)-\beta(t,y,v)|+
 {|\sigma(t,x,v)-\sigma(t,y,v)|}
 \le L_R|x-y|,\qquad x,y\in\overline B_R, v\in U.
\]

(iii) There are deterministic $C<\infty$ and $p_0\ge0$ such that
\[
 |f(t,x,v)|\le C{\varpi(x)}^{p_0},\qquad
 |G(x)|\le C{\varpi(x)}^{p_0}.
\]
For every $R<\infty$, almost everywhere in $(\omega,t)$,
\begin{equation}\label{eq:A1-cost-modulus}
 \lim_{a\downarrow0}\sup_{v\in U}
 \sup_{\substack{x,y\in\overline B_R\\|x-y|\le a}}
 |f(t,x,v)-f(t,y,v)|=0.
\end{equation}
All bounds hold outside common null sets, simultaneously in the spatial
and control variables. There is no restriction on the growth of $L_R$
as $R\to\infty$. Local spatial Lipschitz continuity of $f$ with constants
independent of $v$ implies \eqref{eq:A1-cost-modulus}; a deterministic
modulus is not required. For compact $U$, joint continuity already
implies \eqref{eq:A1-cost-modulus}. Separability and continuity in $v$
allow all control infima to be taken over a fixed countable dense set;
no attaining minimizer is assumed.

We use the following stronger hypothesis only for bounded auxiliary
problems and for results that explicitly invoke it.
\medskip

\noindent$(\cA1_{\mathrm b})$ Assumption $(\cA1)$ holds, and there
is a deterministic $L<\infty$ such that
\begin{align*}
 |G(x)|+\sup_{v\in U}
 \bigl(|f(t,x,v)|+|\beta(t,x,v)|+{|\sigma(t,x,v)|}\bigr)
 &\le L,\\
 |G(x)-G(y)|+\sup_{v\in U}\bigl(
 |f(t,x,v)-f(t,y,v)|+|\beta(t,x,v)-\beta(t,y,v)|
 &\\[-6pt]
 {}+{|\sigma(t,x,v)-\sigma(t,y,v)|}\bigr)
 &\le L|x-y|.
\end{align*}
Compactness of $U$ is not part of $(\cA1_{\mathrm b})$.

The polynomial-growth class used for uniqueness is
\begin{equation}\label{eq:polynomial-class}
 \cS^2_{\mathrm{pol}}
 :=\bigcup_{r\ge0}\left\{u\in\cS^2(C_{\mathrm{loc}}(\bR^d)):
 E\left[\sup_{0\le t\le T,\,x\in\bR^d}
 \frac{|u(t,x)|^2}{{\varpi(x)}^{2r}}\right]<\infty\right\}.
\end{equation}
This class permits a random square-integrable polynomial growth bound.
The degree $r$ of a competing solution need not equal $p_0$.
}

Finally, we introduce an auxiliary result on the Snell envelope of a continuous adapted process. 
As it follows straightforwardly from \cite[Propositions~2.3, 3.5 and~5.1]
{El_Karoui-reflec-1997}, we omit the proof here. 
\begin{lem}[Scalar Snell envelope]\label{lem-snell-envelope}
Let $0\le a<T$ be deterministic, and let $Y$ be a real-valued
continuous adapted process on $[a,T]$ such that
$E(Y^*)^2<\infty$, where $Y^*=\sup_{a\le s\le T}|Y_s|$.
Under the standing augmented Brownian filtration, the following hold.
\begin{enumerate}
\item[(i)] The Snell envelope $Z$ of $Y$ is a continuous
supermartingale. For every $[a,T]$-valued stopping time $\tau$,
\[
 Z_\tau=\esssup_{\rho:\,\tau\le\rho\le T}
                 E_{\sF_\tau}Y_\rho,
\]
where the essential supremum is taken over stopping times $\rho$.
In particular, $Z\ge Y$ and $Z_T=Y_T$.

\item[(ii)] The envelope has the Doob--Meyer decomposition
\[
 Z_s=Z_a+M_s-K_s,\qquad
 M_s=\int_a^s\zeta_r\,dW_r,\qquad a\le s\le T,
\]
where $\zeta$ is predictable and square-integrable, and $K$ is
continuous, adapted and increasing, with $K_a=0$. Moreover,
there is a constant $C$, depending only on $T$, such that
\begin{equation}\label{eq:contact-snell-bounds}
 E\sup_{a\le s\le T}|Z_s|^2
 +E\int_a^T|\zeta_s|^2\,ds+EK_T^2
 \le C\,E(Y^*)^2.
\end{equation}

\item[(iii)] For every $[a,T]$-valued stopping time $\tau$, the
first contact time
\[
 \rho_\tau^*:=\inf\{s\in[\tau,T]:Z_s=Y_s\}
\]
is optimal: $Z_\tau=E_{\sF_\tau}Y_{\rho_\tau^*}$.
\end{enumerate}
\end{lem}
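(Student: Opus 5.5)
I would prove Lemma~\ref{lem-snell-envelope} by reducing it to the classical theory of optimal stopping and reflected BSDEs in \cite{El_Karoui-reflec-1997}. The key observation is that the Snell envelope of $Y$ coincides with the first component of the reflected BSDE with zero generator, terminal value $\xi=Y_T$ and continuous obstacle $S=Y$.

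\textbf{Step 1: existence of a continuous solution.} Since $Y$ is continuous and adapted with $E(Y^*)^2<\infty$, the data $(\xi,f\equiv0,S)$ satisfy the standing hypotheses of \cite{El_Karoui-reflec-1997}: square-integrable terminal value, Lipschitz (here trivial) generator, and continuous obstacle with $E\sup_s (S_s^+)^2<\infty$ and $S_T\le\xi$. Working on the interval $[a,T]$ with the augmented Brownian filtration, Proposition~5.1 of that paper (Picard or penalization) gives a unique triple $(Z,\zeta,K)$. Here $Z$ is continuous, $\zeta$ is predictable and square-integrable, and $K$ is continuous and increasing with $K_a=0$. The triple satisfies
\[
 Z_s=Y_T+K_T-K_s-\int_s^T\zeta_r\,dW_r,\qquad Z\ge Y,\qquad \int_a^T(Z_s-Y_s)\,dK_s=0 .
\]
Martingale representation in the Brownian filtration is what makes the martingale part a stochastic integral. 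Rewriting the equation from time $a$ gives the decomposition in (ii).

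\textbf{Step 2: identification with the Snell envelope.} Proposition~2.3 of \cite{El_Karoui-reflec-1997} shows that for every stopping time $\tau\in[a,T]$,
\[
 Z_\tau=\esssup_{\tau\le\rho\le T}E_{\sF_\tau}Y_\rho ,
\]
with the convention that the value at $T$ is $\xi=Y_T$. The argument runs as follows. For any $\rho\ge\tau$, taking conditional expectations in the equation between $\tau$ and $\rho$ and using $K_\rho-K_\tau\ge0$ and $Z_\rho\ge Y_\rho$ gives $Z_\tau\ge E_{\sF_\tau}Y_\rho$. For the reverse inequality, take $\rho=\rho^*_\tau$. By continuity, $Z_{\rho^*_\tau}=Y_{\rho^*_\tau}$, and $Z>Y$ on $[\tau,\rho^*_\tau)$. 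The Skorokhod condition then forces $K$ to be constant on $[\tau,\rho^*_\tau]$, so $Z_\tau=E_{\sF_\tau}Y_{\rho^*_\tau}$.

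This single argument yields both the essential-supremum formula in (i) and the optimality statement (iii). It also shows that $Z$ is the smallest continuous supermartingale dominating $Y$, that is, the Snell envelope. Since $Z_s=Z_a+M_s-K_s$ with $K$ increasing, $Z$ is a supermartingale, and $Z_T=Y_T$ holds by construction.

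\textbf{Step 3: a priori bound.} The estimate \eqref{eq:contact-snell-bounds} is Proposition~3.5 of \cite{El_Karoui-reflec-1997} specialized to $f\equiv0$. It can also be obtained directly:
\begin{itemize}
\item From the representation in (i), $|Z_s|\le E_{\sF_s}Y^*$, and Doob's inequality gives $E\sup|Z|^2\le 4E(Y^*)^2$.
\item It\^o's formula applied to $|Z|^2$, combined with $\int (Z-Y)\,dK=0$, gives $\int Z\,dK=\int Y\,dK\le Y^*K_T$. Young's inequality then controls $E\int|\zeta|^2$ by $E(Y^*)^2+\epsilon EK_T^2$.
\item Writing $K_T=Z_a-Y_T+\int\zeta\,dW$ gives $EK_T^2\le C\,(E(Y^*)^2+E\int|\zeta|^2)$.
\item Choosing $\epsilon$ small closes the estimate with a constant depending only on $T$.
\end{itemize}

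\textbf{Main obstacle.} The only delicate point is the first-contact-time optimality. It needs the continuity of both $Z$ and $Y$, so that the infimum defining $\rho^*_\tau$ is attained, and the flat-off (Skorokhod) condition, so that $K$ does not increase before contact. Both are supplied by the continuous-obstacle theory of \cite{El_Karoui-reflec-1997}, so no new argument is required beyond checking its hypotheses on $[a,T]$.
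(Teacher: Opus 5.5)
Your proposal is correct and takes the same route as the paper. The paper omits the proof and cites Propositions~2.3, 3.5 and~5.1 of \cite{El_Karoui-reflec-1997}, which amounts to identifying the Snell envelope with the first component of the reflected BSDE with zero generator, terminal value $Y_T$ and obstacle $Y$. Your direct checks of first-contact optimality via the Skorokhod condition, and of \eqref{eq:contact-snell-bounds} via It\^o's formula for $|Z|^2$, are sound supplements to that citation.
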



\section{Some properties of the value function and dynamic programming principle}

We first present some estimates for the state process and the standard proof is omitted.

\begin{lem}\label{lem-SDE}
Let Assumption $(\cA1)$ hold. Then the following assertions hold.
\begin{enumerate}
\item[(i)] For each $\theta\in\cU$, the controlled SDE
\eqref{state-proces-contrl} has a unique nonexplosive strong solution,
even when it is started at a stopping time $\tau$ from a finite
$\sF_\tau$-measurable state $\xi$.

\item[(ii)] For every $p>0$, the solution in {\rm (i)} satisfies
\begin{align}
 E_{\sF_\tau}\sup_{\tau\le s\le T}
       {\varpi(X_s^{\tau,\xi;\theta})}^p
 &\le C_p{\varpi(\xi)}^p,\label{eq-poly-state-moment}\\
 E_{\sF_\tau}\sup_{\tau\le s\le(\tau+h)\wedge T}
       |X_s^{\tau,\xi;\theta}-\xi|^p
 &\le C_p{\varpi(\xi)}^p h^{p/2},\qquad 0<h\le T,
 \label{eq-poly-state-increment}
\end{align}
where $C_p$ depends only on $p,T$ and the linear-growth constant
in $(\cA1)$. 
Here, we note that for merely finite $\xi$, the nonnegative conditional
expectations are understood in the extended sense, or equivalently
by $\sF_\tau$-localization. 
 More precisely, for any nonnegative random variable $Z$,
we define
\[
 E_{\sF_\tau}Z
 :=\lim_{k\to\infty}E_{\sF_\tau}[Z\wedge k]\in[0,\infty],
\]
where the limit is increasing. For localization, put
$A_n=\{|\xi|\le n\}\in\sF_\tau$ and $\xi_n=\xi\mathbf1_{A_n}$.
Let $Z(\xi)$ denote either of the nonnegative suprema on the
left-hand sides of \eqref{eq-poly-state-moment} and
\eqref{eq-poly-state-increment}, with $\tau,\theta$ and, when
applicable, $h$ fixed. The  uniqueness of the strong solution in (i) implies that
$Z(\xi)=Z(\xi_n)$ on $A_n$. Hence
\[
 E_{\sF_\tau}[\mathbf1_{A_n}Z(\xi)]
 =\mathbf1_{A_n}E_{\sF_\tau}Z(\xi_n).
\]
The right-hand side is an ordinary integrable conditional
expectation. The estimates for bounded initial states bound it by
$C_p\mathbf1_{A_n}{\varpi(\xi)}^p$ for
\eqref{eq-poly-state-moment}, and by
$C_p\mathbf1_{A_n}{\varpi(\xi)}^p h^{p/2}$ for
\eqref{eq-poly-state-increment}.
Since $A_n\uparrow\Omega$   a.s., conditional monotone
convergence yields the required estimates.
The displayed bounds make them finite
  a.s.; the corresponding suprema are integrable whenever
$E{\varpi(\xi)}^p<\infty$.

\item[(iii)] If $X=X^{\tau,\xi;\theta}$ and
$\widehat X=X^{\tau,\hat\xi;\theta}$ start from
$\sF_\tau$-measurable $\overline B_R$-valued states $\xi,\hat\xi$ and
use the same control, define
\[
 \rho_R=\inf\{s\in[\tau,T]:|X_s|\vee|\widehat X_s|\ge R\}\wedge T,
 \qquad \inf\varnothing=\infty.
\]
Then, for every $p>0$,
\begin{equation}\label{eq-poly-state-stability}
 E_{\sF_\tau}\sup_{\tau\le s\le\rho_R}
 |X_s^{\tau,\xi;\theta}-X_s^{\tau,\hat\xi;\theta}|^p
 \le C_{p,R}|\xi-\hat\xi|^p,
\end{equation}
where $C_{p,R}$ depends only on $p,T$ and the local Lipschitz
constant $L_R$. If either initial state lies on $\partial B_R$,
then $\rho_R=\tau$ and the estimate is immediate.
Under $(\cA1_{\mathrm b})$, the estimate holds up to $T$, with
a constant independent of $R$.

\item[(iv)] The solution has the following flow property: for each fixed
$\theta,\tau,\xi$ and stopping time $\rho$ with $\tau\le\rho\le T$,
\[
 X_s^{\tau,\xi;\theta}
 =X_s^{\rho,X_\rho^{\tau,\xi;\theta};\theta},
 \qquad \rho\le s\le T,\quad\text{a.s.}
\] 
\end{enumerate}
\end{lem}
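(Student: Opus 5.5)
The plan is to treat Lemma~\ref{lem-SDE} as a collection of standard SDE facts with random coefficients. Everything will be reduced, by $\sF_\tau$-localization, to bounded initial data and to the localized (globally Lipschitz) coefficients obtained by composing with the radial projection $\pi_R$.

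\emph{Existence and uniqueness, item (i).} For each $R$, define $\beta_R(t,x,v)=\beta(t,\pi_R(x),v)$ and $\sigma_R$ similarly. Since $\pi_R$ is $1$-Lipschitz, these coefficients are globally Lipschitz in $x$ with constant $L_R$, uniformly in $v$. They are also bounded by $K\varpi(R)$ and remain $\sP\otimes\cB(\bR^d)$-measurable after the progressively measurable control $\theta$ is plugged in. The classical Picard iteration then gives, for an $\sF_\tau$-measurable $\xi$ (first bounded, then general via the sets $A_n$), a unique strong solution $X^R$ on $[\tau,T]$. Let $\rho_R$ be the exit time of $X^R$ from $B_R$. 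By pathwise uniqueness, the solutions $X^R$ and $X^{R'}$ with $R'>R$ agree up to $\rho_R$, so they can be patched into a solution defined up to $\lim_R\rho_R$. Nonexplosion will follow from the moment bound in (ii), applied to the stopped processes uniformly in $R$, together with Fatou's lemma. Uniqueness up to each $\rho_R$ comes from the Lipschitz estimate, giving uniqueness globally.

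\emph{Moment estimates, item (ii).} Apply It\^o's formula to $\varpi(X)^p$ for $p\ge2$ (smaller $p$ then follows by conditional Jensen). Because $|\beta|+|\sigma|\le K\varpi(x)$, both the drift and the quadratic-variation terms are bounded by $C\varpi(X)^p$. The Burkholder--Davis--Gundy inequality, applied conditionally on $\sF_\tau$ for the stopped process, bounds the martingale supremum by
\[
\tfrac12 E_{\sF_\tau}\sup\varpi(X)^p+C\,E_{\sF_\tau}\int\varpi(X)^p\,ds.
\]
Gronwall's lemma then yields \eqref{eq-poly-state-moment}, first for bounded $\xi$ with stopping at $\rho_R$ and letting $R\to\infty$ by monotone convergence. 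For \eqref{eq-poly-state-increment}, write
\[
X_s-\xi=\int_\tau^s\beta\,dr+\int_\tau^s\sigma\,dW_r .
\]
H\"older's inequality and BDG give a bound of $C\bigl(h^{p}+h^{p/2}\bigr)E_{\sF_\tau}\sup\varpi(X)^p$, which by \eqref{eq-poly-state-moment} is at most $C\varpi(\xi)^ph^{p/2}$ since $h\le T$. The extension to finite $\xi$ via $A_n$ is exactly as described in the statement.

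\emph{Stability and flow, items (iii)--(iv).} For (iii), up to $\rho_R$ both processes stay in $\overline B_R$, where the coefficients are $L_R$-Lipschitz. It\^o's formula for $|X-\widehat X|^p$ on $[\tau,\rho_R]$, combined with BDG and Gronwall, yields \eqref{eq-poly-state-stability} with $C_{p,R}$ depending only on $p$, $T$ and $L_R$. Under $(\cA1_{\mathrm b})$ the same argument runs on all of $[\tau,T]$ with the constant $L$. For (iv), both sides solve the SDE on $[\rho,T]$ with the same $\sF_\rho$-measurable initial value, so pathwise uniqueness from (i) gives the identity.

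\emph{Main obstacle.} The only delicate point is making the conditional BDG and Gronwall steps rigorous when $\tau$ is random and $\xi$ is merely finite. I would handle this by working on the sets $A_n\in\sF_\tau$, using the identity $E_{\sF_\tau}[\mathbf 1_{A_n}Z]=E[\mathbf 1_{A_n}Z\,|\,\sF_\tau]$. I would then shift time to $\tau$, since $W_{\tau+\cdot}-W_\tau$ is a Brownian motion independent of $\sF_\tau$ by the strong Markov property, which lets the unconditional estimates be applied pathwise in the $\sF_\tau$ data.
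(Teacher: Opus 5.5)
Your proposal is correct and follows the standard argument the paper has in mind: the paper omits the proof as standard and records only the same $\{|\xi|\le n\}$-localization for finite $\xi$ that you use. One simplification: the strong-Markov time shift is unnecessary, and since the coefficients depend on the path before $\tau$, it would in fact require regular conditional probabilities. Applying the unconditional BDG and Gronwall arguments to $\mathbf 1_A$ times the stopped processes, for arbitrary $A\in\sF_\tau$, already gives the conditional estimates with the same deterministic constants.
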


Next is a bounded-data approximation result that is used in various applications.

\begin{lem}[Bounded-data approximation]\label{lem-poly-approximation}
{Let Assumption $(\cA1)$ hold.}
Let $\pi_N$ denote the radial projection onto $\overline B_N$,
$N\ge1$, and define
\begin{align*}
 \beta_N(t,x,v)&=\beta(t,\pi_Nx,v),&
 \sigma_N(t,x,v)&=\sigma(t,\pi_Nx,v),\\
 f_N(t,x,v)&=f(t,\pi_Nx,v),&G_N(x)&=G(\pi_Nx).
\end{align*}
For a nonnegative smooth spatial mollifier $\rho_h$ supported in
$B_h$, $0<h\le1$, set
$f_{N,h}=f_N*\rho_h$ and $G_{N,h}=G_N*\rho_h$.
Write $J_N,V_N$ and
$J_{N,h},V_{N,h}$ for the corresponding costs and values. Set
\begin{align}
 a_{N,h}(t)&=\sup_{x\in\bR^d,\,v\in U}
       |f_{N,h}(t,x,v)-f_N(t,x,v)|,\nonumber\\
 b_{N,h}&=\sup_{x\in\bR^d}|G_{N,h}(x)-G_N(x)|,\nonumber\\
 \eta_{N,h}(t)&=E_{\sF_t}
       \left[b_{N,h}+\int_t^T a_{N,h}(s)\,ds\right].
 \label{eq-poly-cost-correction}
\end{align}
{Then the following assertions hold.}
\begin{enumerate}
\item[{(i)}] The data
$(\beta_N,\sigma_N,f_{N,h},G_{N,h})$ satisfy
$(\cA1_{\mathrm b})$ for each fixed $N,h$.

\item[{(ii)}] {The quantities
$a_{N,h},b_{N,h}$ and $\eta_{N,h}$ are measurable,} and
\begin{equation}\label{eq-poly-eta-convergence}
 -d\eta_{N,h}=a_{N,h}\,dt-z_{N,h}\,dW,
 \qquad \eta_{N,h}\ge0,
 \qquad \|\eta_{N,h}\|_{\cS^2(\bR)}\longrightarrow0
 \quad(h\downarrow0).
\end{equation}
\item[{(iii)}] For every fixed control, the costs and the values have jointly
 (w.r.t. $(t,x)$) continuous versions.
{The exceptional null set for a fixed-control field
may depend on that control.}
For these versions,
\begin{equation}\label{eq-poly-cost-mollification}
\begin{aligned}
 |J_{N,h}(t,x;\theta)-J_N(t,x;\theta)|
   &\le\eta_{N,h}(t),\\
 |V_{N,h}(t,x)-V_N(t,x)|&\le\eta_{N,h}(t),
\end{aligned}
\end{equation}
\begin{equation}\label{eq-poly-exit-cost}
\begin{aligned}
 |J(t,x;\theta)-J_N(t,x;\theta)|
   &\le C_a{\varpi(x)}^{p_0+a}N^{-a},\\
 |V(t,x)-V_N(t,x)|
   &\le C_a{\varpi(x)}^{p_0+a}N^{-a}
\end{aligned}
\end{equation}
for every $a>0$ and $|x|<N$, with $C_a$ independent of $N$ and
$\theta$. {Both estimates hold at stopping times $\tau$
and bounded $\sF_\tau$-measurable initial states $\xi$, with
$|\xi|<N$ a.s. for \eqref{eq-poly-exit-cost}.}
\item[{(iv)}] All these costs and values satisfy 
\begin{equation}\label{eq-poly-value-bound}
 |V(t,x)|+|J(t,x;\theta)|\le C_0{\varpi(x)}^{p_0},
\end{equation}
and the same estimate, with a uniform constant, holds for the
projected and mollified problems.

\item[{(v)}] In particular,
$V_{N,h}\to V_N$ in $\cS^2(C(\overline B_R))$ as $h\downarrow0$,
and $V_N\to V$ in $\cS^2(C(\overline B_R))$ as $N\uparrow\infty$, for every $R$.
\end{enumerate}
\end{lem}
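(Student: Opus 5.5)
I will prove the assertions in the order (i), (iv), (ii), (iii), (v). The estimates in (iii)–(iv) drive everything, and (v) is a corollary of them.

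\emph{Step (i).} The projection $\pi_N$ is $1$-Lipschitz. Hence $\beta_N,\sigma_N$ are bounded by $K\varpi(N)$ and globally Lipschitz with constant $L_N$. Likewise $|f_N|,|G_N|\le C\varpi(N)^{p_0}$. After mollification, $f_{N,h},G_{N,h}$ are bounded by the same constant and Lipschitz with constant $C\varpi(N)^{p_0}\|\nabla\rho_h\|_{L^1}$, uniformly in $v$. Continuity in $(x,v)$ and measurability are preserved by convolution in $x$ (dominated convergence). This gives $(\cA1_{\mathrm b})$.

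\emph{Step (iv).} For (iv), write $X=X^{t,x;\theta}$ and use Lemma~\ref{lem-SDE}(ii): $|f(s,X_s,\theta_s)|\le C\varpi(X_s)^{p_0}$ and $|G(X_T)|\le C\varpi(X_T)^{p_0}$. Then $|J|\le C_0\varpi(x)^{p_0}$, and $V$ inherits the bound as an essential infimum. The projected and mollified data satisfy the same growth bounds with constants independent of $N,h$, since $\varpi(\pi_N x)\le\varpi(x)$ and $h\le1$.

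\emph{Step (ii).} Measurability of $a_{N,h},b_{N,h}$ holds because the suprema can be taken over rational $x$ and a countable dense subset of $U$, using continuity. Each $a_{N,h}$ is bounded by $2C\varpi(N+1)^{p_0}$. Then $\eta_{N,h}(t)=E_{\sF_t}[\,b_{N,h}+\int_0^Ta_{N,h}\,]-\int_0^t a_{N,h}$. Martingale representation gives the BSDE with some $z_{N,h}$, and clearly $\eta_{N,h}\ge0$. By Doob's inequality, $\|\eta_{N,h}\|_{\cS^2}\le C\|b_{N,h}+\int_0^Ta_{N,h}\|_{L^2}$. Because $f_N$ depends on $x$ only through the compact set $\overline B_{N}$, the modulus \eqref{eq:A1-cost-modulus} with $R=N$ gives $a_{N,h}(t)\to0$ for a.e.\ $(\omega,t)$. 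The same holds for $b_{N,h}\to0$, since $G$ is uniformly continuous on $\overline B_{N+1}$. Dominated convergence then yields \eqref{eq-poly-eta-convergence}.

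\emph{Step (iii), mollification estimate.} The same control yields the same state process in the $N$ and $(N,h)$ problems. Therefore $|J_{N,h}-J_N|(t,x;\theta)\le E_{\sF_t}[b_{N,h}+\int_t^Ta_{N,h}]=\eta_{N,h}(t)$, and taking essential infima gives the $V$-estimate.

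\emph{Step (iii), exit estimate.} For $|x|<N$, let $\tau_N$ be the exit time of $X^{t,x;\theta}$ from $B_N$. Up to $\tau_N$ the projected and original state processes coincide, so the costs differ only on $\{\tau_N\le T\}$. On that event the difference is bounded by $C\sup_s\varpi(X_s)^{p_0}+C\sup_s\varpi(X^N_s)^{p_0}$. Apply Cauchy--Schwarz, and bound the exit probability by $P_{\sF_t}(\sup|X|\ge N)\le N^{-2a}E_{\sF_t}\sup\varpi(X)^{2a}\le C N^{-2a}\varpi(x)^{2a}$ via Lemma~\ref{lem-SDE}(ii). This gives $C_a\varpi(x)^{p_0+a}N^{-a}$ uniformly in $\theta$, and the $V$-version follows.

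\emph{Step (iii), stopping times.} Both estimates extend to stopping times and bounded $\sF_\tau$-measurable $\xi$. First treat simple $(\tau,\xi)$ by piecewise pasting, then pass to limits, using the continuous versions.

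\emph{Continuity (main obstacle).} The main obstacle is producing jointly continuous versions of $J$ and $V$, because $V$ is an essential infimum over an uncountable family. Under $(\cA1_{\mathrm b})$ I would proceed as follows. Lemma~\ref{lem-SDE}(iii) gives the conditional Lipschitz bound $|J(t,x)-J(t,y)|\le C|x-y|$ uniformly in $\theta$. It also gives a time increment bound of order $|t-s|^{1/2}$ plus a martingale term. Kolmogorov's criterion then yields a jointly continuous version of $J$. For $V$, use the uniform spatial Lipschitz bound. For time regularity, use the dynamic programming estimate $|V(t,x)-E_{\sF_t}V(s,x)|\le C|s-t|^{1/2}$, obtained from the lattice property of $\{J(t,x;\theta)\}$ (controls can be pasted). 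Continuity of $t\mapsto V(t,x)$ then follows as in \cite{Qiu2018viscosity}, by comparison with the continuous martingale $E_{\sF_t}V(s,x)$. For general $(\cA1)$ data, the continuous versions are obtained as locally uniform limits of the projected values, via the exit estimate.

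\emph{Step (v).} This follows from \eqref{eq-poly-cost-mollification}, \eqref{eq-poly-eta-convergence}, and the exit estimate with $N\to\infty$ on $\overline B_R$.
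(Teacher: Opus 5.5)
Your decomposition is essentially the paper's: projection and mollification, the linear BSDE for $\eta_{N,h}$, same-state comparison for the mollification bound, coupling up to exit for the $N^{-a}$ bound, and continuous versions obtained by successive limits. Your Cauchy--Schwarz step is equivalent to the paper's $M^{p_0}\mathbf{1}_{E_N}\le N^{-a}M^{p_0+a}$. Your argument for $V$ under $(\cA1_{\mathrm b})$, via the lattice property and comparison with the continuous martingale $E_{\sF_t}V(s,x)$, is sound; the paper instead imports bounded-data continuity from \cite{Qiu2018viscosity}. The continuity paragraph, however, has two concrete defects.

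First, Kolmogorov's criterion does not apply to $J$ jointly in $(t,x)$. The martingale term in the time increment has no power-law moment bound when the data are merely measurable in $\omega$. For instance, with $f=0$ and $G=g(\omega)$ independent of $x$, $J(t,x;\theta)=E_{\sF_t}g$, and its $L^p$ time modulus can be arbitrarily slow. What works is the device you already use for $V$. Flow and tower give $|J(t,x;\theta)-E_{\sF_t}J(s,x;\theta)|\le C|s-t|^{1/2}$, hence continuity in $t$ at each fixed $x$, and the deterministic spatial Lipschitz bound then gives joint continuity.

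Second, you skip the projected problem. The data $(\beta_N,\sigma_N,f_N,G_N)$ do not satisfy $(\cA1_{\mathrm b})$: $f_N$ and $G_N$ are only uniformly continuous, with a random modulus, and are not Lipschitz. So your $(\cA1_{\mathrm b})$ argument covers $J_{N,h},V_{N,h}$ but not $J_N,V_N$. Yet both your passage to general data (``limits of the projected values'') and your step (v) presuppose continuous versions of $V_N$. The paper supplies them from ingredients you already have. On a countable dense set, $|V_{N,h}-V_{N,h'}|\le\eta_{N,h}+\eta_{N,h'}$, and this extends to all $(t,x)$ by continuity. Hence $V_{N,h}$ is Cauchy in $\cS^2(C(\overline B_R))$ by (ii), and its limit is identified with $V_N$ at deterministic points; the same holds for $J_N$. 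Only after this step is the exit estimate used to reach $V$ and $J$.

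Your stopping-time paragraph is too thin at the one point that matters. Once continuous versions exist, the two estimates at $(\tau,\xi)$ follow simply by substitution. The real content, used later in Lemma~\ref{lem-J} and Proposition~\ref{prop-value-func}, is that these versions at $(\tau,\xi)$ coincide with the conditional full cost $E_{\sF_\tau}H(\tau,\xi;\theta)$ and with $Q:=\essinf_\theta E_{\sF_\tau}H(\tau,\xi;\theta)$. For $Q$, ``pass to limits'' is not enough, because essential infima do not commute with limits. The paper handles this as follows:
\begin{itemize}
\item it takes finite-valued $\tau_n\downarrow\tau$ and uses $\sF_{\tau_n}\downarrow\sF_\tau$;
\item it uses a bound of order $2^{-n/2}$ on the full cost under the shift of initial time, uniform in the control;
\item it pastes $1/n$-optimal controls at $\tau_n$ after an arbitrary control on $[\tau,\tau_n)$, which yields $EQ\le EV(\tau,\xi)$;
\item it transfers the identity from the bounded problems to $V_N$ and $V$ via $\eta_{N,h}(\tau)\to0$ and the conditional coupling estimate at $\tau$.
\end{itemize}
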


\begin{proof}
{\noindent\textbf{(i).}} The projection is $1$-Lipschitz. Thus $\beta_N,\sigma_N$ are bounded
and globally Lipschitz, and retain a common linear-growth constant.
The projected costs are bounded by $C{\varpi(N)}^{p_0}$.
Spatial mollification gives
deterministic global Lipschitz bounds for the mollified costs, while
preserving continuity in the control. {Projection and convolution also
preserve the required measurability. Hence these data satisfy
$(\cA1_{\mathrm b})$.}

{\noindent\textbf{(ii).}} The local spatial equicontinuity of $f$, uniformly in $v$, implies
uniform spatial continuity of $f_N$ on all of $\bR^d$, uniformly in $v$, a.s. The same assertion for $G_N$ follows from
continuity on the compact ball. Consequently $a_{N,h}\to0$
 $\bP\otimes dt$-a.e., and $b_{N,h}\to0$ a.s. Their bounds
$2C{\varpi(N)}^{p_0}$ permit dominated convergence. By continuity, suprema
in \eqref{eq-poly-cost-correction} can be taken over fixed countable
dense sets in $\bR^d$ and $U$, respectively. This proves
measurability.
As dominated convergence gives
$\|b_{N,h}\|_{L^2(\Omega)}+\|a_{N,h}\|_{\cL^2}\to0$,
the standard BSDE estimate for the linear driver $a_{N,h}$ (see \cite{Karoui_Peng_Quenez} for instance) yields
\[
\begin{aligned}
 \|\eta_{N,h}\|_{\cS^2}
 &\le C\left(\|b_{N,h}\|_{L^2(\Omega)}
             +\,\|a_{N,h}\|_{\cL^2}\right)
 \longrightarrow0,
\end{aligned}
\]
proving the remaining assertion of \eqref{eq-poly-eta-convergence}.

{\noindent\textbf{(iii).} Under $(\cA1_{\mathrm b})$,
the argument of \cite[Proposition 3.3 and Appendix B]{Qiu2018viscosity}
{gives the deterministic-time conditional-cost and value
identities, joint continuity, spatial Lipschitz estimates, and
uniform bounds for the bounded cost and value fields}
for the present separable metric control space, as it just uses uniform bounds
and spatial Lipschitz estimates, Brownian martingale continuity, and
measurable pasting, without requiring compactness of the control space $U$. 
At deterministic
times, substitution of random initial states follows by simple-state
approximation, SDE stability, and spatial Lipschitz continuity
of the costs; countable pasting on $\sF_t$-events gives the
essential-infimum identity. These arguments use neither a
stopping-time DPP nor the polynomial-growth assertions of this lemma.}

Here is the stopping-time extension needed for those bounded problems.
For bounded $\xi\in L^0(\sF_\tau)$, choose finite-valued
$\tau_n\downarrow\tau$ with $\tau_n-\tau\le2^{-n}T$.
Boundedness and global Lipschitz continuity give $L^1$ convergence of
the full costs started from $(\tau_n,\xi)$ to those started from
$(\tau,\xi)$, uniformly in the control, with error at most
$C2^{-n/2}$.
{Write
\[
 H(r,\zeta;\theta)
 =\int_r^T f(s,X_s^{r,\zeta;\theta},\theta_s)\,ds
       +G(X_T^{r,\zeta;\theta})
\]
for the full cost before taking conditional expectations.
For each fixed $\theta$, put $H_n=H(\tau_n,\xi;\theta)$
and $H=H(\tau,\xi;\theta)$. Right-continuity of the filtration
gives $\sF_{\tau_n}\downarrow\sF_\tau$, so continuity of the martingale and dominated convergence yield
\[
\begin{aligned}
 \|E_{\sF_{\tau_n}}H_n-E_{\sF_\tau}H\|_{L^1}
 &\le \|H_n-H\|_{L^1}
       +\|E_{\sF_{\tau_n}}H-E_{\sF_\tau}H\|_{L^1}\\
 &\longrightarrow0.
\end{aligned}
\]
The finite-valued stopping-time identity gives
$J(\tau_n,\xi;\theta)=E_{\sF_{\tau_n}}H_n$.
Continuity and boundedness of the cost field also give
$J(\tau_n,\xi;\theta)\to J(\tau,\xi;\theta)$ in $L^1$,
which identifies $J(\tau,\xi;\theta)=E_{\sF_\tau}H$.}

At a deterministic time, downward
directedness of conditional costs gives a decreasing sequence tending
to their essential infimum. Selecting the first index whose cost is
within a prescribed positive constant of that infimum gives an
admissible control by countable pasting; refer to \cite[Proposition 3.3]{Qiu2018viscosity}.
 This also applies at each
finite-valued $\tau_n$.
 Set
\[
 Q=\essinf_{\theta\in\cU}E_{\sF_\tau}H(\tau,\xi;\theta).
\]
For each fixed $\theta$, passage to the limit in
$V(\tau_n,\xi)\le J(\tau_n,\xi;\theta)$ gives
$V(\tau,\xi)\le J(\tau,\xi;\theta)$, and hence
$V(\tau,\xi)\le Q$.
Choose $\theta_n\in\cU$ with
$J(\tau_n,\xi;\theta_n)\le V(\tau_n,\xi)+1/n$.
Past an arbitrary control on $[\tau,\tau_n)$ and denote
the resulting control by $\bar\theta_n$. The uniform full-cost
estimate above gives
\[
\begin{aligned}
 EQ
 \le E H(\tau,\xi;\bar\theta_n)
 &\le E H(\tau_n,\xi;\bar\theta_n)+C2^{-n/2}\\
 &=EJ(\tau_n,\xi;\theta_n)+C2^{-n/2}\\
 &\le EV(\tau_n,\xi)+1/n+C2^{-n/2}.
\end{aligned}
\]
Boundedness and continuity of $V$ imply $EQ\le EV(\tau,\xi)$.
Together with $V(\tau,\xi)\le Q$, this proves $Q=V(\tau,\xi)$
  a.s., i.e., 
\[
 V(\tau,\xi)=\essinf_{\theta\in\cU}
 E_{\sF_\tau}\left[\int_\tau^T
 f(s,X_s^{\tau,\xi;\theta},\theta_s)\,ds
 +G(X_T^{\tau,\xi;\theta})\right]
\]
for the bounded problem. Locality argument extends this identity to every
finite $\xi$. Th establishes the bounded stopping-time
identity independently of the general results below.

The state dynamics are unchanged by cost mollification, so comparison
of the full costs gives \eqref{eq-poly-cost-mollification} first at
deterministic points, uniformly in the control. Taking essential
infima gives its value-function version.
{For fixed $N,h,h'$, first take a countable dense
set of deterministic $(t,x)$. 
Then, at all these points, it holds that
\[
 |V_{N,h}(t,x)-V_{N,h'}(t,x)|
 \le\eta_{N,h}(t)+\eta_{N,h'}(t),\quad \text{ a.s.}
\]
 Continuity of the two bounded-data
value fields and of $\eta_{N,h},\eta_{N,h'}$ extends this
inequality to every $(t,x)$. Consequently,
\[
 \|V_{N,h}-V_{N,h'}\|_{\cS^2(C(\overline B_R))}
 \le\|\eta_{N,h}\|_{\cS^2(\bR)}+\|\eta_{N,h'}\|_{\cS^2(\bR)}
 \longrightarrow0,\quad \text{as }h,h'\downarrow0.
\]
  This  gives a continuous limit in $\cS^2(C(\overline B_R))$.
At each deterministic point, \eqref{eq-poly-cost-mollification}
identifies that limit with $V_N$. A countable exhaustion by
balls gives a jointly continuous version on all of
$[0,T]\times\bR^d$. The same argument applies to each
fixed-control cost $J_N$.
Passing to the limit also gives \eqref{eq-poly-cost-mollification}
  for these versions.}
{To identify these versions at a stopping time $\tau$
and a bounded $\sF_\tau$-measurable state $\xi$, let
$H_N(\tau,\xi;\theta)$ denote the full cost for the projected
problem and set
\[
 C_N^\theta=E_{\sF_\tau}H_N(\tau,\xi;\theta),\qquad
 Q_N=\essinf_{\theta\in\cU}C_N^\theta.
\]
Optional sampling gives
\[
 \eta_{N,h}(\tau)=E_{\sF_\tau}\left[
 b_{N,h}+\int_\tau^T a_{N,h}(s)\,ds\right].
\]
Thus the full-cost comparison and the bounded stopping-time
identities give
\[
 |J_{N,h}(\tau,\xi;\theta)-C_N^\theta|\le\eta_{N,h}(\tau),
 \qquad
 |V_{N,h}(\tau,\xi)-Q_N|\le\eta_{N,h}(\tau).
\]
Combining these with \eqref{eq-poly-cost-mollification}, which
holds for all $(t,x)$ a.s., yields
\[
 |J_N(\tau,\xi;\theta)-C_N^\theta|\le2\eta_{N,h}(\tau),
 \qquad
 |V_N(\tau,\xi)-Q_N|\le2\eta_{N,h}(\tau).
\]
Since $\|\eta_{N,h}(\tau)\|_{L^2}\le
\|\eta_{N,h}\|_{\cS^2(\bR)}\to0$ as $h\downarrow0$, these versions coincide with the
conditional costs and their essential infima at $(\tau,\xi)$.}

{Fix a stopping time $\tau$, a bounded
$\sF_\tau$-measurable state $\xi$ with $|\xi|<N$, and a control
$\theta$. Couple the original state $X$ and the projected state
$X^N$ using the same Brownian motion and control, both starting
from $(\tau,\xi)$. They agree up to their common first hitting
time of radius $N$. Hence, 
\[
 E_N=\left\{\sup_{\tau\le s\le T}|X_s|\ge N\right\}
    =\left\{\sup_{\tau\le s\le T}|X_s^N|\ge N\right\}, \quad \text{a.s.}
\]
  Put
\[
 M=\sup_{\tau\le s\le T}{\varpi(X_s)},\qquad
 M_N=\sup_{\tau\le s\le T}{\varpi(X_s^N)},
\]
and let $H,H_N$ denote the full costs for the original and
projected data. These costs agree on $E_N^c$. Their growth
bounds and $M,M_N\ge N$ on $E_N$ give, for every $a>0$,
\[
\begin{aligned}
 |H-H_N|
 &\le C(M^{p_0}+M_N^{p_0})\mathbf1_{E_N}\\
 &\le CN^{-a}(M^{p_0+a}+M_N^{p_0+a}).
\end{aligned}
\]
The original and projected dynamics have a common linear-growth
constant. Lemma~\ref{lem-SDE} yields
\[
 E_{\sF_\tau}|H-H_N|
 \le C_aN^{-a}{\varpi(\xi)}^{p_0+a},
\]
uniformly in $N$ and $\theta$. At deterministic initial points
this gives \eqref{eq-poly-exit-cost} for costs. Taking essential
infima gives the same estimate for the values.}
{For fixed $R,a>0$ and $N,n>R$, comparison through
$V$ at a countable dense set of deterministic points gives
\[
 \sup_{\substack{0\le t\le T\\|x|\le R}}
 |V_N(t,x)-V_n(t,x)|
 \le C_a(1+R^2)^{(p_0+a)/2}(N^{-a}+n^{-a}),
\]
where continuity of $V_N,V_n$ extends the estimate to every point.
Taking integers $N,n>R$ and a countable exhaustion by balls constructs
a locally uniform continuous (adapted) limit. The deterministic-point
estimate \eqref{eq-poly-exit-cost} identifies this limit with $V$.
The same argument applies to each fixed-control cost $J$.
Continuity then extends \eqref{eq-poly-exit-cost} to all $(t,x)$
with $|x|<N$ for these versions.

For a stopping time $\tau$ and a bounded $\sF_\tau$-measurable
state $\xi$, set $C^\theta=E_{\sF_\tau}H$ and
$Q=\essinf_{\theta\in\cU}C^\theta$.
For $N>\|\xi\|_{L^\infty}$, the conditional coupling estimate gives
\[
 |C_N^\theta-C^\theta|\le C_aN^{-a}{\varpi(\xi)}^{p_0+a},
 \qquad
 |Q_N-Q|\le C_aN^{-a}{\varpi(\xi)}^{p_0+a}.
\]
Since $C_N^\theta=J_N(\tau,\xi;\theta)$ and
$Q_N=V_N(\tau,\xi)$, local uniform convergence identifies
$J(\tau,\xi;\theta)=C^\theta$ and $V(\tau,\xi)=Q$.}

{\noindent\textbf{(iv).}
Since $|\pi_Nx|\le |x|$ and
${\varpi(x-y)}\le2{\varpi(x)}$ for $|y|\le h\le1$,
the projected and mollified costs satisfy
\[
 |f_{N,h}(t,x,v)|+|G_{N,h}(x)|
 \le C'{\varpi(x)}^{p_0},
\]
with $C'$ independent of $N,h$; the same bound holds for
$f_N,G_N$ and the original costs.
The common linear-growth bound for the state coefficients and
Lemma~\ref{lem-SDE} therefore give
\[
 |J(t,x;\theta)|
 \le C E_{\sF_t}\left[
       \int_t^T{\varpi(X_s^{t,x;\theta})}^{p_0}\,ds
       +{\varpi(X_T^{t,x;\theta})}^{p_0}\right]
 \le C''{\varpi(x)}^{p_0},
\]
uniformly in $\theta$, and likewise for $J_N,J_{N,h}$, with
the same constant. For $p_0=0$, this estimate follows directly
from boundedness of the costs. Taking essential infima gives
the corresponding bounds for the values. Continuity extends
these bounds from a countable dense set of deterministic points
to all $(t,x)$, which proves \eqref{eq-poly-value-bound} and its
approximating versions.}

{\noindent\textbf{(v).}
For fixed $N$ and $R$, assertions (ii) and (iii) yield
\[
 \|V_{N,h}-V_N\|_{\cS^2(C(\overline B_R))}
 \le\|\eta_{N,h}\|_{\cS^2(\bR)}\longrightarrow0
 \qquad(h\downarrow0).
\]
For any fixed $a>0$ and $N>R$, assertion (iii) also gives
\[
 \|V_N-V\|_{\cS^2(C(\overline B_R))}
 \le C_a(1+R^2)^{(p_0+a)/2}N^{-a}
 \longrightarrow0
 \qquad(N\uparrow\infty).
\]
This proves the desired convergence.}
\end{proof}
\begin{lem}\label{lem-J}
Let $(\cA1)$ hold, $\theta\in\cU$, and $t\in[0,T]$.
For every $\sF_t$-measurable $\xi$ satisfying the moment condition
$E{\varpi(\xi)}^{p_0}<\infty$, we have
\[
 J(t,\xi;\theta)=E_{\sF_t}\left[
 \int_t^T f(s,X_s^{t,\xi;\theta},\theta_s)\,ds
 +G(X_T^{t,\xi;\theta})\right],\qquad
 V(t,\xi)=\essinf_{\theta\in\cU}J(t,\xi;\theta).
\]
Under $(\cA1_{\mathrm b})$, these identities hold for every finite
$\sF_t$-measurable $\xi$, without a moment condition.
\end{lem}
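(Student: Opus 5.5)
We would extend the bounded-state substitution identities of Lemma~\ref{lem-poly-approximation}(iii), which hold at deterministic (indeed stopping) times for bounded $\sF_t$-measurable $\xi$, to unbounded $\xi$ by localization on the events $A_n=\{|\xi|\le n\}\in\sF_t$. Put $\xi_n=\xi\mathbf 1_{A_n}$. Here $J(t,\cdot;\theta)$ and $V(t,\cdot)$ denote the jointly continuous versions from Lemma~\ref{lem-poly-approximation}(iii), so that $J(t,\xi;\theta)$ and $V(t,\xi)$ are well defined by composition.

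\emph{Cost identity.} By pathwise uniqueness of strong solutions (Lemma~\ref{lem-SDE}(i)), $X^{t,\xi;\theta}=X^{t,\xi_n;\theta}$ on $A_n$. Hence the full cost $H(t,\xi;\theta)=\int_t^T f(s,X_s^{t,\xi;\theta},\theta_s)\,ds+G(X_T^{t,\xi;\theta})$ coincides with $H(t,\xi_n;\theta)$ on $A_n$. The growth bound (iii) of $(\cA1)$ and \eqref{eq-poly-state-moment} give
\[
E_{\sF_t}|H(t,\xi;\theta)|\le C\varpi(\xi)^{p_0}.
\]
Since $E\varpi(\xi)^{p_0}<\infty$, $H(t,\xi;\theta)$ is integrable and its conditional expectation is an ordinary one. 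Because $A_n\in\sF_t$,
\[
\mathbf 1_{A_n}E_{\sF_t}H(t,\xi;\theta)=\mathbf 1_{A_n}E_{\sF_t}H(t,\xi_n;\theta)=\mathbf 1_{A_n}J(t,\xi_n;\theta)=\mathbf 1_{A_n}J(t,\xi;\theta),
\]
where the last equality is pointwise composition with a field. Letting $n\to\infty$ gives the first identity a.s.

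\emph{Value identity.} The same localization gives
\[
\mathbf 1_{A_n}V(t,\xi)=\mathbf 1_{A_n}V(t,\xi_n)=\mathbf 1_{A_n}\essinf_\theta J(t,\xi_n;\theta).
\]
It remains to check that $\mathbf 1_{A}\essinf_\theta Y^\theta=\essinf_\theta \mathbf 1_A Y^\theta$ for $A\in\sF_t$ and that the essential infimum is local. This is the step needing care. We would argue via downward directedness: the family $\{J(t,\xi;\theta)\}_\theta$ is closed under pasting $\theta\mathbf 1_B+\theta'\mathbf 1_{B^c}$ for $B\in\sF_t$, since the cost identity just proved respects such pasting on $\sF_t$-events. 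Therefore $\essinf_\theta J(t,\xi;\theta)$ is a decreasing limit of a sequence $J(t,\xi;\theta_k)$. The same controls paste with those realizing the infimum for $\xi_n$ on $A_n$, which gives equality on each $A_n$. Integrability of the conditional costs, from the bound $C\varpi(\xi)^{p_0}$, keeps all essential infima finite.

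\emph{Bounded case.} Under $(\cA1_{\mathrm b})$ the costs are bounded by $L(1+T)$, so every conditional expectation is well defined without any moment condition, and the same localization applies verbatim. The main obstacle is the localization of the essential infimum, which rests on the pasting/directedness argument above; the rest is routine.
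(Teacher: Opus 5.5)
Your proposal is correct and follows the paper's proof essentially verbatim. Both arguments localize on $A_n=\{|\xi|\le n\}\in\sF_t$, use pathwise uniqueness to identify the full costs on $A_n$, apply the bounded-state identities from Lemma~\ref{lem-poly-approximation}, and invoke locality of conditional expectations and essential infima. The one difference is that your directedness-and-pasting justification for the essential infimum is more than needed: since $\mathbf 1_{A_n}J(t,\xi_n;\theta)=\mathbf 1_{A_n}J(t,\xi;\theta)$ for each $\theta$, the maximality property of the essential infimum alone gives equality of the two essential infima on $A_n$.
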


\begin{proof}
The bounded-initial-state identities follow from
Lemma \ref{lem-poly-approximation}.
 We use the continuous versions constructed there and write
\[
 H^\theta(\zeta)=\int_t^T
 f(s,X_s^{t,\zeta;\theta},\theta_s)\,ds
 +G(X_T^{t,\zeta;\theta})
\]
for the full cost from an $\sF_t$-measurable state $\zeta$.
For $p_0>0$, polynomial growth and \eqref{eq-poly-state-moment} give
\[
 E|H^\theta(\xi)|
 \le C E\sup_{t\le s\le T}\varpi(X_s^{t,\xi;\theta})^{p_0}
 \le C' E\varpi(\xi)^{p_0}<\infty.
\]
For $p_0=0$, integrability follows directly from boundedness
of the costs.

Set $A_n=\{|\xi|\le n\}\in\sF_t$ and
$\xi_n=\xi\mathbf1_{A_n}$. Pathwise uniqueness gives
$H^\theta(\xi)=H^\theta(\xi_n)$ on $A_n$. Thus the bounded-state
identity and locality of conditional expectation imply
\begin{align*}
 \mathbf1_{A_n}J(t,\xi;\theta)
 &=\mathbf1_{A_n}J(t,\xi_n;\theta)
 =E_{\sF_t}[\mathbf1_{A_n}H^\theta(\xi_n)]
 =\mathbf1_{A_n}E_{\sF_t}H^\theta(\xi),\quad \text{a.s.}
\end{align*}
Similarly, locality of essential infima yields
\begin{align}
 \mathbf1_{A_n}V(t,\xi)
 &=\mathbf1_{A_n}V(t,\xi_n)
 =\mathbf1_{A_n}\essinf_{\theta\in\cU}J(t,\xi_n;\theta)
 =\mathbf1_{A_n}\essinf_{\theta\in\cU}J(t,\xi;\theta),\quad \text{a.s.}
\end{align}
  Since
$\lim_{n\to\infty}\bP(\Omega\setminus A_n)=0$, both asserted identities follow.
Under $(\cA1_{\mathrm b})$, the full costs are uniformly bounded,
so the same argument applies to all finite $\xi$ without a
moment condition.
\end{proof}

\begin{prop}\label{prop-value-func}
Let $(\cA1)$ hold.
\begin{enumerate}
\item[(i)]  Let $\tau\le T$ be a stopping time and let
$\xi$ be $\sF_\tau$-measurable with $E\varpi(\xi)^{p_0}<\infty$.
For every $\theta\in\cU$,
\[
 J(\tau,\xi;\theta)=E_{\sF_\tau}\left[
 \int_\tau^T f(s,X_s^{\tau,\xi;\theta},\theta_s)\,ds
 +G(X_T^{\tau,\xi;\theta})\right],\qquad
 V(\tau,\xi)=\essinf_{\theta\in\cU}J(\tau,\xi;\theta).
\]
For every deterministic $a>0$, there exists $\theta^a\in\cU$
such that
\[
 0\le J(\tau,\xi;\theta^a)-V(\tau,\xi)\le a
 \quad\text{a.s.}
\]
In particular, for $\tau=t$ deterministic and every $\eps>0$,
one may choose $\bar\theta\in\cU$ with
$E[J(t,\xi;\bar\theta)-V(t,\xi)]<\eps$.
\item[(ii)]  For every $(\tau,\xi)$ as in (i), every
fixed $\theta\in\cU$, and every stopping time
$\eta\in\mathcal T^\tau$,
\begin{equation}\label{eq-stopping-control}
 V(\tau,\xi)\le E_{\sF_\tau}\left[
 \int_\tau^\eta f(s,X_s^{\tau,\xi;\theta},\theta_s)\,ds
 +V(\eta,X_\eta^{\tau,\xi;\theta})\right]
 \le J(\tau,\xi;\theta).
\end{equation}
The stopping time $\eta$ may be chosen after fixing $\theta$;
no DPP identity for a family of control-dependent horizons is
asserted.
In particular, for every $(\bar\theta,x)\in\cU\times\bR^d$,
$J(t,X_t^{0,x;\bar\theta};\bar\theta)
-V(t,X_t^{0,x;\bar\theta})$ is a supermartingale. Equivalently,
for $0\le t\le\tilde t\le T$,
\begin{equation}\label{eq-vfunc-supM}
 V(t,X_t^{0,x;\bar\theta})\le E_{\sF_t}\left[
 V(\tilde t,X_{\tilde t}^{0,x;\bar\theta})
 +\int_t^{\tilde t}f(s,X_s^{0,x;\bar\theta},\bar\theta_s)\,ds
 \right],\quad \text{a.s}.
\end{equation}
\item[(iii)] For every $(\bar\theta,x)$,
$t\mapsto V(t,X_t^{0,x;\bar\theta})$ is continuous.
\item[(iv)] The fields $V$ and $J(\cdot,\cdot;\theta)$, for each
fixed $\theta$, have jointly continuous versions. Under the
additional bounded assumption $(\cA1_{\mathrm b})$, there is a
deterministic $L_V$, independent of $\theta$, such that
\[
 |V(t,x)-V(t,y)|+|J(t,x;\theta)-J(t,y;\theta)|
 \le L_V|x-y|.
\]
No local Lipschitz assertion for $V$ is made under $(\cA1)$ alone.
\item[(v)] These continuous versions satisfy
\eqref{eq-poly-value-bound}. In particular,
$V,J(\cdot,\cdot;\theta)\in\cS^\infty(C(\overline B_R))$
for every $R$. Under $(\cA1_{\mathrm b})$, the moment restriction
in {(i) and (ii)} can be omitted and, with $L$ a uniform bound on the data,
\[
 \sup_{(t,x)\in[0,T]\times\bR^d}
 \max\{|V(t,x)|,|J(t,x;\theta)|\}\le L(T+1)
 \quad\text{a.s.}
\]
\end{enumerate}
The exceptional null set for a fixed-control field may depend on
that control.
\end{prop}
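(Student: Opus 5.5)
The plan is to reduce everything to the bounded projected problems of Lemma~\ref{lem-poly-approximation}, where identities and continuity are available, and then pass to the limit $N\uparrow\infty$ using the exit-cost estimate \eqref{eq-poly-exit-cost} together with a localization in $\xi$ as in Lemma~\ref{lem-J}.

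\textbf{Assertion (i).} For bounded $\xi$, the stopping-time identities for $J(\tau,\xi;\theta)$ and $V(\tau,\xi)$ were already obtained at the end of the proof of Lemma~\ref{lem-poly-approximation}(iii). For general $\xi$ with $E\varpi(\xi)^{p_0}<\infty$, I would set $A_n=\{|\xi|\le n\}\in\sF_\tau$ and $\xi_n=\xi\mathbf 1_{A_n}$, then repeat the locality argument of Lemma~\ref{lem-J} verbatim with $t$ replaced by $\tau$. Integrability of the full cost follows from \eqref{eq-poly-state-moment}.

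For the $a$-optimal control, the family $\{J(\tau,\xi;\theta)\}_\theta$ is downward directed by pasting on $\sF_\tau$-events. Hence there is a sequence $\theta_k$ with $J(\tau,\xi;\theta_k)\downarrow V(\tau,\xi)$ a.s. I would then set $\theta^a=\theta_k$ on the event where $k$ is the first index with $J(\tau,\xi;\theta_k)\le V(\tau,\xi)+a$; these events partition $\Omega$ up to a null set. The control $\theta^a$ is progressively measurable on $[\tau,T]$; before $\tau$ it is set arbitrarily. Locality then gives $J(\tau,\xi;\theta^a)\le V+a$. The deterministic-time statement follows by taking expectations.

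\textbf{Assertion (ii).} The right inequality uses the flow property of Lemma~\ref{lem-SDE}(iv), the tower property, and $V(\eta,X_\eta)\le J(\eta,X_\eta;\theta)$, where the latter is identified via (i) at $(\eta,X_\eta)$. The moment condition holds by \eqref{eq-poly-state-moment}.

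For the left inequality, I would apply (i) at $(\eta,X_\eta)$ to obtain an $a$-optimal $\theta^a$. Then I would concatenate $\theta$ on $[\tau,\eta)$ with $\theta^a$ on $[\eta,T]$. The concatenated control gives $V(\tau,\xi)\le J(\tau,\xi;\tilde\theta)\le E_{\sF_\tau}[\int_\tau^\eta f\,ds+V(\eta,X_\eta)]+a$; letting $a\downarrow0$ yields the claim.

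The supermartingale property and \eqref{eq-vfunc-supM} are the special case $\tau=t$, $\eta=\tilde t$, $\xi=X^{0,x;\bar\theta}_t$. Here $J(t,X_t;\bar\theta)+\int_0^tf$ is a martingale by the tower property, and subtracting the submartingale-type bound for $V$ gives the supermartingale.

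\textbf{Assertions (iii)--(v).} Joint continuity of $V$ and $J$ is Lemma~\ref{lem-poly-approximation}(iii),(v), and the bounds are \eqref{eq-poly-value-bound}. For (iii), $V$ is jointly continuous and $X$ is continuous, so the composition is continuous; a single null set suffices because the versions are continuous everywhere.

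Under $(\cA1_{\mathrm b})$, I would argue as follows. The Lipschitz estimate follows from Lemma~\ref{lem-SDE}(iii) with the global constant and the Lipschitz costs, uniformly in $\theta$, and then passes to the essential infimum. The sup bound $L(T+1)$ is immediate from $|f|,|G|\le L$. The moment restriction can be dropped because the costs are bounded, as in Lemma~\ref{lem-J}.

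\textbf{Main obstacle.} The delicate step is the pasting at random time $\eta$ in (ii). One must check that the concatenated control is progressively measurable. One must also check that the value at $(\eta,X_\eta)$ computed with the continuous version coincides with the conditional essential infimum, which requires (i) at stopping times with unbounded random states. I would handle the latter by localizing on $\{|X_\eta|\le n\}$ and using that the bounded-state identity holds there.
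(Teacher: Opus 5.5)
Your proposal is correct and follows essentially the same route as the paper. You take the bounded-state identities from Lemma~\ref{lem-poly-approximation} and extend them by $\sF_\tau$-localization as in Lemma~\ref{lem-J}, build the $a$-optimal control by first-index pasting along a decreasing sequence, obtain \eqref{eq-stopping-control} by concatenating at $\eta$, and inherit continuity and bounds from the approximation lemma. The only difference is that you derive the $(\cA1_{\mathrm b})$ Lipschitz bound directly from Lemma~\ref{lem-SDE}(iii), where the paper cites \cite{Qiu2018viscosity} for the same argument.
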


\begin{proof}
 The deterministic-time and regularity assertions under
$(\cA1_{\mathrm b})$ follow from
\cite[Proposition 3.3 and Appendix B]{Qiu2018viscosity}, with the
extension to the present control space explained in the proof of
Lemma~\ref{lem-poly-approximation}. For general data, (iv), (v),
and then (iii) follow from Lemma \ref{lem-poly-approximation} and continuity of the state
paths.

For (i), Lemma~\ref{lem-poly-approximation} establishes the two
identities for bounded $\xi$. They extend by locality on
$\{|\xi|\le n\}\in\sF_\tau$, as in the proof of
Lemma~\ref{lem-J}. The moment assumption and
\eqref{eq-poly-state-moment} ensure integrability; for $p_0=0$
this follows directly from boundedness of the costs.
The conditional costs are downward directed by pasting two
controls after $\tau$ on the $\sF_\tau$-event where the
cost is smaller. 
Choose a sequence
$J(\tau,\xi;\theta^j)\downarrow V(\tau,\xi)$. Selecting the
first index $j$ for which
$J(\tau,\xi;\theta^j)\le V(\tau,\xi)+a$ gives a countable
$\sF_\tau$-measurable partition. On each event, use the selected
control after $\tau$, with a fixed control before $\tau$. 
This piecewise definition gives a
progressively measurable $U$-valued control $\theta^a$ satisfying
\[
 0\le J(\tau,\xi;\theta^a)-V(\tau,\xi)\le a
 \quad\text{a.s.}
\]
Taking $\tau=t$ and $a=\eps/2$ proves the expected
$\eps$-optimality assertion. Under $(\cA1_{\mathrm b})$, the
full costs are uniformly bounded, so the moment condition is
unnecessary.

For (ii), put $X=X^{\tau,\xi;\theta}$. The state moment
estimate ensures that (i) applies at $(\eta,X_\eta)$; under
bounded data, only finiteness of $X_\eta$ is needed.
The right inequality in \eqref{eq-stopping-control} follows from
the flow and tower properties and
$V(\eta,X_\eta)\le J(\eta,X_\eta;\theta)$.
For the left inequality, keep $\theta$ up to $\eta$ and then
use a conditionally $a$-optimal continuation from (i) at
$(\eta,X_\eta)$. The pasted control $\tilde\theta^a$ is
admissible, and the flow and tower properties give
\[
\begin{aligned}
 V(\tau,\xi)
 &\le J(\tau,\xi;\tilde\theta^a)
 \le E_{\sF_\tau}\left[
 \int_\tau^\eta f(s,X_s,\theta_s)\,ds+V(\eta,X_\eta)
 \right]+a,\quad\text{a.s.}
\end{aligned}
\]
The arbitrariness of $a$ implies the left inequality.

Finally, fix $(\bar\theta,x)$ and write
\[
 X_t=X_t^{0,x;\bar\theta},\qquad
 A_t=\int_0^t f(s,X_s,\bar\theta_s)\,ds.
\]
The growth bounds and Lemma~\ref{lem-SDE} imply that
$A_T+G(X_T)$ is integrable and that
\[
 E\sup_{0\le t\le T}
 \bigl(|J(t,X_t;\bar\theta)|+|V(t,X_t)|\bigr)<\infty.
\]
The flow property and Lemma~\ref{lem-J} yield
\[
 J(t,X_t;\bar\theta)+A_t
 =E_{\sF_t}[A_T+G(X_T)],
\]
which is a martingale. Applying \eqref{eq-stopping-control}
with initial pair $(t,X_t)$ and horizon $\tilde t$ gives
\eqref{eq-vfunc-supM}; equivalently, $V(t,X_t)+A_t$ is a
submartingale. Their difference $J(t,X_t;\bar\theta)-V(t,X_t)$ is the claimed supermartingale,
and the martingale identity also yields the stated equivalence.
\end{proof}

\begin{thm}\label{thm-DPP}
Under $(\cA1)$, for stopping times $\tau\le\hat\tau\le T$ and
$\sF_\tau$-measurable $\xi$ with
$E{\varpi(\xi)}^{p_0}<\infty$,
\[
 V(\tau,\xi)=\essinf_{\theta\in\cU}E_{\sF_\tau}\left[
 \int_\tau^{\hat\tau}f(s,X_s^{\tau,\xi;\theta},\theta_s)\,ds
 +V(\hat\tau,X_{\hat\tau}^{\tau,\xi;\theta})\right]
 \quad\text{a.s.}
\]
Under $(\cA1_{\mathrm b})$, it suffices that $\xi$ be finite.
\end{thm}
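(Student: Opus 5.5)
Denote by $\Lambda$ the right-hand side of the asserted identity and by
\[
 \Lambda^\theta:=E_{\sF_\tau}\Bigl[\int_\tau^{\hat\tau}f(s,X_s^{\tau,\xi;\theta},\theta_s)\,ds+V(\hat\tau,X_{\hat\tau}^{\tau,\xi;\theta})\Bigr]
\]
the quantity inside the essential infimum, so that $\Lambda=\essinf_{\theta\in\cU}\Lambda^\theta$. The plan is to establish the two inequalities separately, using Proposition~\ref{prop-value-func}(i) and~(ii) together with a pasting argument.

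\emph{Lower bound $V(\tau,\xi)\le\Lambda$.} Fix $\theta\in\cU$. Inequality~\eqref{eq-stopping-control} with $\eta=\hat\tau$ gives $V(\tau,\xi)\le\Lambda^\theta$ a.s. This is legitimate because $\hat\tau$ is a fixed stopping time that does not depend on $\theta$. Taking the essential infimum over $\theta$ yields $V(\tau,\xi)\le\Lambda$.

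\emph{Upper bound $\Lambda\le V(\tau,\xi)$.} Here I would not invoke the right inequality of~\eqref{eq-stopping-control} for each $\theta$ in isolation, since that only gives $\Lambda^\theta\le J(\tau,\xi;\theta)$. Taking essential infima of both sides then gives $\Lambda\le\essinf_\theta J(\tau,\xi;\theta)=V(\tau,\xi)$, where the last equality is Proposition~\ref{prop-value-func}(i). So this direction is immediate once the integrability is in place.

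\emph{Integrability.} The only point needing care is that each $\Lambda^\theta$ is well defined, i.e.\ that $V(\hat\tau,X_{\hat\tau})$ and $\int f$ are integrable. By~\eqref{eq-poly-value-bound} and~\eqref{eq-poly-state-moment}, $E\varpi(X_{\hat\tau}^{\tau,\xi;\theta})^{p_0}\le C\,E\varpi(\xi)^{p_0}<\infty$, so Proposition~\ref{prop-value-func}(i) also applies at $(\hat\tau,X_{\hat\tau})$; this is what part~(ii) already uses. Under $(\cA1_{\mathrm b})$, all fields are bounded by $L(T+1)$ by Proposition~\ref{prop-value-func}(v), and the moment condition is dropped as stated there.

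\emph{Main obstacle.} Everything is reduced to Proposition~\ref{prop-value-func}, whose ingredients are the stopping-time identification of $V(\tau,\xi)$ from Lemma~\ref{lem-poly-approximation}, downward directedness, and measurable pasting. The only subtlety I expect is measurability and the versions of the fields: $V(\hat\tau,X_{\hat\tau})$ must mean the jointly continuous version evaluated at a random point, and the null sets are control-dependent. Since each inequality is asserted a.s.\ for one fixed $\theta$ at a time and essential infima are insensitive to null sets, this causes no difficulty.
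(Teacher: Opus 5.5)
Your proposal is correct and matches the paper's proof: both sandwich $V(\tau,\xi)\le\Lambda^\theta\le J(\tau,\xi;\theta)$ using Proposition~\ref{prop-value-func}(ii) with the fixed horizon $\eta=\hat\tau$, then take essential infima and close with Proposition~\ref{prop-value-func}(i). Your remark that you ``would not invoke'' the right inequality of \eqref{eq-stopping-control} contradicts what you then correctly do, so delete it.
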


\begin{proof}
 For each $\theta\in\cU$, set
\[
 Q^\theta=E_{\sF_\tau}\left[
 \int_\tau^{\hat\tau}f(s,X_s^{\tau,\xi;\theta},\theta_s)\,ds
 +V(\hat\tau,X_{\hat\tau}^{\tau,\xi;\theta})\right].
\]
Proposition~\ref{prop-value-func}(ii), with the prescribed horizon
$\eta=\hat\tau$, gives
\[
 V(\tau,\xi)\le Q^\theta\le J(\tau,\xi;\theta)
 \quad\text{a.s.}
\]
Taking essential infima and using
Proposition~\ref{prop-value-func}(i), we obtain
\[
 V(\tau,\xi)
 \le\essinf_{\theta\in\cU}Q^\theta
 \le\essinf_{\theta\in\cU}J(\tau,\xi;\theta)
 =V(\tau,\xi),
\]
which proves the asserted equality.
\end{proof}

\section{Existence of viscosity solutions for stochastic HJB equations}

\subsection{Definition of viscosity solutions}\label{sec:def}

\begin{defn}\label{defn-testfunc}
 For $u\in\cS^2(C_{\mathrm{loc}}(\bR^d))$, we say
$u\in\mathscr C_{\sF}^2$ if there exist predictable fields
$(\mathfrak d_tu,\mathfrak d_\omega u)$ such that, for every $R>0$,
\[
 u\in\cL^2(C^2(B_R)),\qquad
 \mathfrak d_tu\in\cL^2(C(B_R)),\qquad
 \mathfrak d_\omega u\in\cL^2(C^1(B_R;\bR^m)),
\]
and, a.s.,
\begin{align*}
u(r,x)=u(T,x)-\int_r^T \mathfrak{d}_su(s,x)\,ds -\int_r^T\mathfrak{d}_{\omega}u(s,x)\,dW_s,\quad \forall\,(r,x)\in[0,T]\times\bR^d.
\end{align*}
\end{defn}

In contrast to the test spaces used in
\cite{Qiu2018viscosity,qiu2019uniqueness,QiuZhang2023games}, which
require the defining norms to be finite over the whole space
$\bR^d$, the present definition imposes only local finiteness on each
ball $B_R$. This choice reflects the local nature of test functions
in the classical viscosity theory for deterministic PDEs. It places no
boundedness condition at spatial infinity, while retaining the
adaptedness and local square-integrability required for stochastic
calculus. Moreover, imposing global finiteness of the defining norms
does not change the classes of viscosity subsolutions or
supersolutions in Definition~\ref{defn-viscosity}; see
Remark~\ref{rmk-viscosity-interpretation}.





Definition~\ref{defn-testfunc} imposes square-integrability
on every spatial ball
to justify the It\^o--Wentzell formula and the martingale estimates
used below. More general integrability conditions may be considered,
but are not pursued here.
For each fixed $x$, a field $u\in\mathscr C_{\sF}^2$ is a
square-integrable It\^o process. Uniqueness of the continuous
semimartingale decomposition determines
$(\mathfrak d_tu,\mathfrak d_\omega u)$ up to
$d\bP\otimes dt$-null sets. Definition~\ref{defn-testfunc} thus
defines both the test space and the two linear operators $\mathfrak{d}_t \text{ and } \mathfrak{d}_{\omega}$.
The functional calculus developed in 
\cite{cont2013functional,Leao-etal-2018} gives a corresponding interpretation of the
drift and diffusion coefficients for such It\^o processes.
For a deterministic field in this test class, $\mathfrak d_\omega u=0$
and $t\mapsto u(t,x)$ is absolutely continuous, with
$\mathfrak d_tu(t,x)=\partial_tu(t,x)$ for almost every $t$.
If the field is continuously differentiable in time, this is the
ordinary time derivative. Under Malliavin regularity sufficient for
a diagonal trace, the corresponding identity is
$\mathfrak d_\omega u(t,x)=D_tu(t,x)$ where $D_t$ is 
the Malliavin derivative; see \cite{Malliavin-1978}.
The pair $(\mathfrak d_t,\mathfrak d_\omega)$ should be distinguished
from the path derivatives $(\partial_t,\partial_\omega)$ used in
functional It\^o calculus; see \cite{buckdahn2015pathwise} and
\cite[Section 2.3]{ekren2016viscosity-1}.
If, for each fixed $x$, the field $u$ admits a sufficiently smooth
nonanticipative representation on Brownian path space, the functional
It\^o formula and uniqueness of the semimartingale decomposition give
\[
 \mathfrak d_tu
 =\partial_tu+\frac12\operatorname{tr}(\partial^2_{\omega\omega}u),
 \qquad
 \mathfrak d_\omega u=\partial_\omega u,
 \quad d\bP\otimes dt\text{-a.e.}
\]
For applications of this correspondence to stochastic partial
differential equations, see \cite[Section 6]{ekren2016viscosity-1}
and \cite{buckdahn2015pathwise}.

For an $[0,T]$-valued stopping time $t$, let $\mathcal T^t$ denote
the set of stopping times taking values in $[t,T]$, and set
$\mathcal T^t_+=\{\tau\in\mathcal T^t:\tau>t\text{ a.s.}\}$.
For $\tau\in\mathcal T^0$ and $\Omega_\tau\in\sF_\tau$, we write
$L^0(\Omega_\tau,\sF_\tau;\bR^d)$ for the space of
$\bR^d$-valued, $\sF_\tau$-measurable functions on $\Omega_\tau$.

We now introduce the notion of viscosity solutions. For each $(u,\tau)\in \cS^{2}(C_{loc}(\bR^d))\times \mathcal T^0$, $\Omega_{\tau}\in\sF_{\tau}$ with $\mathbb P(\Omega_{\tau})>0$ and $\xi\in L^0(\Omega_{\tau},\sF_{\tau};\bR^d)$, we define
\begin{align*}
\underline{\mathcal{G}}u(\tau,\xi;\Omega_{\tau}):=\bigg\{
\phi\in\mathscr C^2_{\sF}:\ &
(\phi-u)(\tau,\xi)=0\quad\text{a.s. on }\Omega_\tau,\\
&\exists\,(\delta,\hat\tau)\in(0,\infty)\times\mathcal T^\tau_+
\text{ such that}\\
& (\phi-u)(s,x)\ge0\quad
\forall\,(s,x)\in[\tau,\hat\tau]\times B_\delta(\xi),
\quad\text{a.s. on }\Omega_\tau
\bigg\},\\
\overline{\mathcal{G}}u(\tau,\xi;\Omega_{\tau}):=\bigg\{
\phi\in\mathscr C^2_{\sF}:\ &
(\phi-u)(\tau,\xi)=0\quad\text{a.s. on }\Omega_\tau,\\
&\exists\,(\delta,\hat\tau)\in(0,\infty)\times\mathcal T^\tau_+
\text{ such that}\\
& (\phi-u)(s,x)\le0\quad
\forall\,(s,x)\in[\tau,\hat\tau]\times B_\delta(\xi),
\quad\text{a.s. on }\Omega_\tau
\bigg\}.
\end{align*}
 Here,  
the radius $\delta$ is deterministic, 
both time endpoints are stopping
times, and the event $\Omega_\tau$ is $\sF_\tau$-measurable.
The contact center need only be finite; no moment assumption on
$\xi$ is imposed. When conditional expectations are used in the
proofs, we localize to $\Omega_\tau\cap\{|\xi|\le N\}$.
On these events the spatial extrema are integrable by the
$\cS^2$ assumptions. Obviously, if $\underline{\mathcal{G}}u(\tau,\xi;\Omega_{\tau})$ or $\overline{\mathcal{G}}u(\tau,\xi;\Omega_{\tau})$ is nonempty, we must have $0\leq\tau <T$ on $\Omega_{\tau}$.

Now it is at the stage to introduce the definition of viscosity solutions.

\begin{defn}\label{defn-viscosity}
For $\phi\in\mathscr C^2_{\sF}$, write
\[
R_\phi(s,x):=-\mathfrak d_s\phi(s,x)
-\bH(s,x,D\phi(s,x),D^2\phi(s,x),D\mathfrak d_\omega\phi(s,x)).
\]
We say $u\in \cS^2(C_{loc}(\bR^d))$ is a viscosity subsolution (resp. supersolution) of BSPDE \eqref{SHJB}, if $u(T,x)\leq (\text{ resp. }\geq) G(x)$ for all $x\in\bR^d$ a.s., and for any $\tau\in  \mathcal T^0$, $\Omega_{\tau}\in\sF_{\tau}$ with $\mathbb P(\Omega_{\tau})>0$ and $\xi\in L^0(\Omega_{\tau},\sF_{\tau};\bR^d)$ and any $\phi\in \underline{\cG}u(\tau,\xi;\Omega_{\tau})$ (resp. $\phi\in \overline{\cG}u(\tau,\xi;\Omega_{\tau})$), there holds
\begin{align}
&\operatorname*{ess\,liminf}_{(s,x)\rightarrow (\tau^+,\xi)}
R_\phi(s,x)\leq0, \text{ for almost all $\omega\in\Omega_{\tau}$}
\label{defn-vis-sub}
\\
\text{(resp., }&\operatorname*{ess\,limsup}_{(s,x)\rightarrow (\tau^+,\xi)}
R_\phi(s,x)\geq0, \text{ for almost all $\omega\in\Omega_{\tau}$).}
\label{defn-vis-sup}
\end{align}

The essential limits in \eqref{defn-vis-sub}--\eqref{defn-vis-sup}
are taken pathwise. More precisely, on $\{\tau<T\}$,
\begin{align*}
\operatorname*{ess\,liminf}_{(s,x)\to(\tau^+,\xi)}R_\phi(s,x)
&:=\lim_{r\downarrow0}
\essinf_{\tau<s<(\tau+r^2)\wedge T}
\inf_{x\in B_r(\xi)}R_\phi(s,x),\\
\operatorname*{ess\,limsup}_{(s,x)\to(\tau^+,\xi)}R_\phi(s,x)
&:=\lim_{r\downarrow0}
\esssup_{\tau<s<(\tau+r^2)\wedge T}
\sup_{x\in B_r(\xi)}R_\phi(s,x).
\end{align*}
Here the essential infimum and supremum are with respect to Lebesgue
measure in time for each fixed $\omega$; the spatial infimum and supremum
are taken before these time-essential extrema.

Equivalently, $u\in\cS^2(C_{loc}(\bR^d))$ is a viscosity subsolution
(resp. supersolution) if it satisfies the terminal inequality above and
the following exclusion property. For any $\tau$, $\Omega_\tau$ and $\xi$
as above and any $\phi\in\mathscr C^2_{\sF}$, suppose there exist
$\eps,\tilde\delta>0$, an event
$\Omega'_\tau\in\sF_\tau$ with
$\Omega'_\tau\subset\Omega_\tau$ and $\bP(\Omega'_\tau)>0$, and
a stopping time $\rho\in\cT^\tau$ with $\rho>\tau$ on $\Omega'_\tau$,
such that, for almost every $\omega\in\Omega'_\tau$,
\[
R_\phi(s,x)\geq\eps
\quad\text{(resp. }R_\phi(s,x)\leq-\eps\text{)}
\]
for Lebesgue-almost every $s\in(\tau(\omega),\rho(\omega))$ and every
$x\in B_{\tilde\delta}(\xi(\omega))$. Then
$\phi\notin\underline{\cG}u(\tau,\xi;\Omega_\tau)$
(resp. $\phi\notin\overline{\cG}u(\tau,\xi;\Omega_\tau)$).

The function $u$ is a viscosity solution of BSPDE \eqref{SHJB} if it is both a viscosity subsolution and a viscosity supersolution of \eqref{SHJB}.
\end{defn}

Now we give some remarks on the above definition of viscosity solutions.
First, we justify the equivalence of the two formulations.
 Under $(\cA1)$ and Definition \ref{defn-testfunc}, the residual $R_\phi$
has a predictable version that is continuous in $x$ a.s. Indeed, the infimum defining the Hamiltonian
{can be taken over a countable dense subset of the separable space $U$, 
by continuity in $v$. The local spatial Lipschitz bounds on $\beta,\sigma$, 
the control-uniform modulus \eqref{eq:A1-cost-modulus}, and the spatial continuity of the test-function
 derivatives give a common local spatial modulus for the expressions inside this infimum. Hence the residual is spatially continuous.}
Spatially continuous versions agree  all $x$ a.s. 
Fubini's theorem therefore shows that the pathwise
time-essential extrema are independent of the choice of such versions.

Both formulations impose the same terminal inequality. We prove the
equivalence for subsolutions. Fix $\tau $ and $\xi$,
extend $\xi$ by zero outside $\Omega_\tau$, and write
\[
L:=\operatorname*{ess\,liminf}_{(s,x)\to(\tau^+,\xi)}R_\phi(s,x).
\]
First suppose \eqref{defn-vis-sub} holds for every upper contact test.
If the strict residual inequality in the exclusion property holds on
$\Omega'_\tau$, then, for almost every $\omega\in\Omega'_\tau$, every
sufficiently small $r>0$ satisfies
$r<\tilde\delta$ and $r^2<\rho(\omega)-\tau(\omega)$.
Thus $L\geq\eps$ on $\Omega'_\tau$.
Since this event has positive probability and is contained in $\Omega_\tau$,
$\phi$ cannot belong to $\underline{\cG}u(\tau,\xi;\Omega_\tau)$.
This proves the exclusion property.

Conversely, suppose the exclusion property holds but there is
$\phi\in\underline{\cG}u(\tau,\xi;\Omega_\tau)$ such that
$\bP(\Omega_\tau\cap\{L>0\})>0$. For positive integers $n,k$, put
\begin{align*}
b_n(s)&:=\inf_{|z|<1/n}R_\phi(s,\xi+z),\qquad s>\tau,\\
I_t^{n,k}&:=\int_0^t
1_{\{s>\tau\}}1_{\{b_n(s)<1/k\}}\,ds,\\
\rho_{n,k}&:=T\wedge\inf\{t\in[0,T]:I_t^{n,k}>0\},
\end{align*}
where $\inf\emptyset=+\infty$ and the integrand is defined to be zero
for $s\leq\tau$. The spatial infimum can be taken over rational
$z$. Moreover, the process $\xi1_{\{s>\tau\}}$ is predictable, so the
integrand defining $I^{n,k}$ is predictable.
Consequently $I^{n,k}$ is continuous, adapted and nondecreasing.
The right-continuity of the filtration implies that
$\rho_{n,k}\in\cT^\tau$ and
\[
D_{n,k}:=\{\rho_{n,k}>\tau\}\in\sF_\tau.
\]
For almost every $\omega\in\Omega_\tau$, $I_t^{n,k}=0$ for $t<\rho_{n,k}$.
It follows that
\[
R_\phi(s,x)\geq1/k
\quad\text{for Lebesgue-almost every }s\in(\tau,\rho_{n,k})
\quad\text{and every }x\in B_{1/n}(\xi).
\]
The exceptional time set here is common to all spatial points.

For almost every $\omega\in\Omega_\tau\cap\{L>0\}$, we have $\tau<T$ and can choose $k$ with $1/k<L$.
For sufficiently large $n$, the defining shrinking-cylinder infimum
is greater than $1/k$. Therefore
$I_{(\tau+n^{-2})\wedge T}^{n,k}=0$ and
$\rho_{n,k}\geq(\tau+n^{-2})\wedge T>\tau$. Hence, up to a null set,
\[
\Omega_\tau\cap\{L>0\}\subset\bigcup_{n,k\geq1}(\Omega_\tau\cap D_{n,k}).
\]
Countability gives a pair $(n,k)$ for which
$\Omega'_\tau:=\Omega_\tau\cap D_{n,k}$ has positive probability.
This event belongs to $\sF_\tau$ and satisfies the exclusion antecedent
with $\eps=1/k$, $\tilde\delta=1/n$ and $\rho=\rho_{n,k}$.
It follows that $\phi\notin\underline{\cG}u(\tau,\xi;\Omega_\tau)$, a contradiction.
Thus \eqref{defn-vis-sub} holds.
Applying the same argument to $-R_\phi$ and lower contact tests proves
the supersolution equivalence.

Next is about the local versus global test functions. 
 \begin{rmk}\label{rmk-viscosity-interpretation}
Requiring the defining norms to be finite on $\bR^d$ gives
the same viscosity subsolutions and supersolutions in
Definition~\ref{defn-viscosity}. Indeed, every such global test belongs
to $\mathscr C_{\sF}^2$. Conversely, suppose that the viscosity
inequality holds for all global tests, and let
$\phi\in\mathscr C_{\sF}^2$ be an upper contact test at
$(\tau,\xi;\Omega_\tau)$, with contact radius $\delta$. Extend $\xi$ by zero
outside $\Omega_\tau$ and set $A_j=\Omega_\tau\cap\{|\xi|\le j\}$. Choose a
deterministic $\chi_j\in C_c^\infty(B_{j+2})$ equal to one on
$B_{j+1}$, and put $\phi_j=\chi_j\phi$. The local norm bounds for
$\phi$ give all the global norm bounds for $\phi_j$, with
\[
 \mathfrak d_s\phi_j=\chi_j\mathfrak d_s\phi,
 \qquad
 \mathfrak d_\omega\phi_j=\chi_j\mathfrak d_\omega\phi.
\]
On $A_j$, the field $\phi_j$ retains the contact on the same
stopped interval and the smaller ball
$B_{\min\{\delta,1/2\}}(\xi)$. All derivatives entering the
residual agree there; hence $R_{\phi_j}=R_\phi$ on $B_{j+1}$
outside one $d\bP\otimes ds$-null set. Their pathwise essential
limits at $(\tau,\xi)$ therefore agree a.s. on $A_j$.
Applying the global-test inequality on each $A_j$ of positive
probability and using $A_j\uparrow \Omega_\tau$ proves the inequality for
$\phi$. The same argument applies to lower contacts. Thus the
local spatial requirements enlarge the test class without changing
the viscosity-solution notion or the growth conditions imposed on
solutions in the comparison results.
\end{rmk}

Third, in contrast to definitions of contact functions in \cite{Qiu2018viscosity,qiu2019uniqueness,QiuZhang2023games}, 
we drop the conditional expectations therein. 
The following lemma relates the conditional and pathwise
contact criteria while keeping the residual inequalities fixed.

{ \begin{lem}
\label{lem-contact-martingale}
Let $u\in\cS^2(C_{\mathrm{loc}}(\bR^d))$,
 and suppose that $\phi\in\mathscr C^2_\sF$
satisfies $(\phi-u)(\tau,\xi)=0$ a.s. on $\Omega_\tau$.
Extend $\xi$ by zero outside $\Omega_\tau$ and put
$A_{j}=\Omega_\tau\cap\{|\xi|\le j\}$, for $j=1,2,3,\ldots$.
Suppose that, for some $\delta>0$ and
$\hat\tau\in\mathcal T^\tau_+$, every
$\bar\tau\in\mathcal T^\tau$ and $j\ge1$ satisfy
\begin{equation}\label{eq:conditional-contact-criterion}
 E_{\sF_\tau}\!\left[
 1_{A_j}\inf_{x\in B_\delta(\xi)}
 (\phi-u)(\bar\tau\wedge\hat\tau,x)\right]\ge0.
\end{equation}
Then, for every $j$ with $\bP(A_j)>0$, there is a spatially
constant square-integrable continuous martingale $N^j$, with
$N^j_\tau=0$, such that
\[
 \phi+N^j\in\underline\cG u(\tau,\xi;A_j),
 \qquad R_{\phi+N^j}=R_\phi.
\]
If the spatial infimum and inequality in
\eqref{eq:conditional-contact-criterion} are replaced by a
supremum and $\le0$, respectively, then
$\phi-N^j\in\overline\cG u(\tau,\xi;A_j)$ and
$R_{\phi-N^j}=R_\phi$.
Consequently, for \eqref{SHJB}, Definition~\ref{defn-viscosity}
is equivalent to imposing the same residual inequalities on
tests satisfying the conditional criterion above and equality
at $(\tau,\xi)$.
\end{lem}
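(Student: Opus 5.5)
The plan is to absorb the conditional (averaged) contact into a spatially constant martingale correction produced by a Snell envelope, using Lemma~\ref{lem-snell-envelope}. Fix $j$ with $\bP(A_j)>0$. For $s\in[\tau,T]$ put
\[
 Y_s:=1_{A_j}\inf_{x\in B_\delta(\xi)}(\phi-u)(s\wedge\hat\tau,x).
\]
Since $\xi$ is $\sF_\tau$-measurable and $\phi-u$ is continuous in $(s,x)$, $Y$ is continuous and adapted on $[\tau,T]$; the infimum may be taken over rational points of the ball. On $A_j$ we have $|\xi|\le j$, so
\[
 |Y_s|\le\sup_{|x|\le j+\delta}\bigl(|\phi(s,x)|+|u(s,x)|\bigr).
\]
The $\cS^2(C_{\mathrm{loc}})$ assumptions on $\phi$ and $u$ then give $E\sup_s|Y_s|^2<\infty$. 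Applying \eqref{eq:conditional-contact-criterion} with $\bar\tau=\rho$ for any $\rho\in\mathcal T^\tau$ gives $E_{\sF_\tau}Y_\rho\ge0$.

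Lemma~\ref{lem-snell-envelope} is stated on a deterministic interval $[a,T]$, so I first extend $-Y$ to $[0,T]$ by
\[
 \widetilde Y_s:=E_{\sF_s}[-Y_\tau]\ \ (s<\tau),\qquad \widetilde Y_s:=-Y_s\ \ (s\ge\tau).
\]
This process is continuous and adapted (the first piece is a continuous Brownian martingale stopped at $\tau$), and it is square-integrable in supremum by Doob's inequality. Let $Z$ be its Snell envelope, with decomposition $Z=Z_0+M-K$, where $M=\int_0^\cdot\zeta\,dW$. By Lemma~\ref{lem-snell-envelope}(i) and the criterion above,
\[
 Z_\tau=\esssup_{\rho\in\mathcal T^\tau}E_{\sF_\tau}[-Y_\rho]\le0 .
\]
Define $N^j_s:=M_{s\vee\tau}-M_\tau=\int_0^s1_{\{r>\tau\}}\zeta_r\,dW_r$, a square-integrable continuous martingale with $N^j_\tau=0$. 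For $s\ge\tau$, since $K$ is increasing,
\[
 -Y_s\le Z_s=Z_\tau+(M_s-M_\tau)-(K_s-K_\tau)\le N^j_s .
\]
Hence $(\phi+N^j-u)(s,x)\ge Y_s+N^j_s\ge0$ for all $(s,x)\in[\tau,\hat\tau]\times B_\delta(\xi)$, a.s.\ on $A_j$. Equality at $(\tau,\xi)$ is preserved because $N^j_\tau=0$. This pathwise domination step is the key one; the main technical care lies in the extension before $\tau$, which is needed to invoke the lemma with a deterministic starting time.

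It remains to check that $\phi+N^j\in\mathscr C^2_\sF$ and that the residual is unchanged. Since $N^j$ is spatially constant, $\mathfrak d_t(\phi+N^j)=\mathfrak d_t\phi$ and $\mathfrak d_\omega(\phi+N^j)=\mathfrak d_\omega\phi+1_{\{s>\tau\}}\zeta$. The added term has zero spatial derivative and lies in $\cL^2$ by \eqref{eq:contact-snell-bounds}. All local norms remain finite, and $D$, $D^2$, $D\mathfrak d_\omega$ are unchanged, so $R_{\phi+N^j}=R_\phi$. The supremum case follows by applying the same argument to $u-\phi$, which yields $\phi-N^j$.

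For the final equivalence, first note that every pathwise test in $\underline\cG u(\tau,\xi;\Omega_\tau)$ satisfies \eqref{eq:conditional-contact-criterion}, since the integrand is nonnegative on $A_j$. Conversely, given a test satisfying the conditional criterion, the viscosity inequality for $\phi+N^j$ on each $A_j$ with $\bP(A_j)>0$ transfers to $\phi$ because $R_{\phi+N^j}=R_\phi$. The pathwise essential limits therefore coincide a.s.\ on $A_j$, and letting $A_j\uparrow\Omega_\tau$ gives the inequality a.s.\ on $\Omega_\tau$. The same reasoning applies to supersolutions.
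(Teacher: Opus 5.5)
Your proposal is correct and follows essentially the paper's argument. It takes a Snell envelope of the negated spatial gap on $A_j$, uses the conditional criterion to force $Z_\tau\le0$, and adds the spatially constant correction $N^j=\int_0^\cdot 1_{\{r>\tau\}}\zeta_r\,dW_r$, which dominates the gap pathwise after $\tau$ and leaves the residual unchanged; the differences are cosmetic: you extend the reward before $\tau$ by the martingale $E_{\sF_s}[-Y_\tau]$ rather than first using $\bar\tau=\tau$ to show the gap vanishes at $\tau$ and extending by zero, and you take the infimum over the full ball $B_\delta(\xi)$ instead of a maximum over a smaller closed ball $\overline B_r(\xi)$, which (by uniform continuity on the compact closure) even gives contact on the original radius.
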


\begin{proof}
Fix $j$ and $0<r<\delta$, and define
\[
 Y_s=
 \begin{cases}
 0,&s<\tau,\\
 1_{A_j}\displaystyle\max_{|z|\le r}
 (u-\phi)(s\wedge\hat\tau,\xi+z),&s\ge\tau.
 \end{cases}
\]
At $\bar\tau=\tau$, the random variable inside
\eqref{eq:conditional-contact-criterion} is
$\sF_\tau$-measurable. Together with equality at $\xi$, this
implies $\phi(\tau,x)\ge u(\tau,x)$ throughout
$B_\delta(\xi)$ on $A_j$ and hence $Y_\tau=0$.
Thus the zero extension before $\tau$ is continuous and adapted.
The maximum over the closed ball is continuous in time.
For any deterministic $R>j+r$, we have
\[
 \sup_s|Y_s|\le 1_{A_j}\left(
 \sup_s\|u(s,\cdot)\|_{C(B_R)}
 +\sup_s\|\phi(s,\cdot)\|_{C(B_R)}\right).
\]
The right-hand side is square integrable, so $Y\in\cS^2$.
Apply Lemma~\ref{lem-snell-envelope} with $a=0$.
Let $Z$ be the Snell envelope of $Y$, and write
\[
 Z_s=Z_0+M_s-K_s,\qquad
 M_s=\int_0^s\zeta_r\,dW_r,\qquad M_0=K_0=0.
\]
For every $\rho\in\mathcal T^\tau$, the inclusion
$\overline B_r(\xi)\subset B_\delta(\xi)$ gives
\[
 Y_\rho\le-1_{A_j}\inf_{x\in B_\delta(\xi)}
       (\phi-u)(\rho\wedge\hat\tau,x).
\]
Thus \eqref{eq:conditional-contact-criterion} yields
$E_{\sF_\tau}Y_\rho\le0$. The stopping-time characterization
of the Snell envelope therefore gives
\[
 Z_\tau=\esssup_{\rho\in\mathcal T^\tau}
 E_{\sF_\tau}Y_\rho=0,
\]
where immediate stopping gives the equality because
$Y_\tau=0$.
The process $r\mapsto1_{\{r>\tau\}}$ is left-continuous
and adapted, hence predictable. Using the decomposition above, set
\[
 N^j_s=\int_0^s1_{\{r>\tau\}}\zeta_r\,dW_r
      =M_s-M_{s\wedge\tau},\qquad 0\le s\le T.
\]
This is a global square-integrable continuous martingale with
$N^j_\tau=0$. Simultaneously for all $s\ge\tau$,
\[
 N^j_s=M_s-M_\tau
 =Z_s-Z_\tau+K_s-K_\tau
 =Z_s+K_s-K_\tau\ge Z_s\ge Y_s.
\]
On $A_j$, it follows that
\[
 (\phi+N^j-u)(\tau,\xi)=0,\qquad
 (\phi+N^j-u)(s,x)\ge0
 \quad\text{for }(s,x)\in[\tau,\hat\tau]\times B_r(\xi).
\]
The martingale bounds in \eqref{eq:contact-snell-bounds} and
Doob's inequality show that the scalar field $N^j$ belongs to
$\mathscr C^2_\sF$. Its drift is zero and its spatial derivatives
vanish. In particular,
\[
 \mathfrak d_s(\phi+N^j)=\mathfrak d_s\phi,\quad
 D(\phi+N^j)=D\phi,\quad D^2(\phi+N^j)=D^2\phi,\quad
 D\mathfrak d_\omega(\phi+N^j)=D\mathfrak d_\omega\phi.
\]
This proves the upper-contact assertion and the equality of
residuals  
for all $x$ a.s. For lower contact, use the maximum of $\phi-u$ in
the definition of $Y$ and subtract $N^j$.

Every pathwise contact satisfies the corresponding conditional
criterion. Conversely, applying the pathwise viscosity inequality
to the corrected fields and then taking the countable union
$A_j\uparrow A$ gives the viscosity inequality for each test
satisfying that criterion. This proves the equivalence.
It uses the fact that $\bH$ depends on $D\mathfrak d_\omega\phi$
but not on the spatially constant value of
$\mathfrak d_\omega\phi$ or of $\phi$ itself.
\end{proof}

Let $\underline\cG_{\mathrm{cond}}u$ and
$\overline\cG_{\mathrm{cond}}u$ denote the contact classes obtained
by replacing the pathwise touching inequalities by
\eqref{eq:conditional-contact-criterion} and its lower-contact
counterpart, respectively, as in \cite{Qiu2018viscosity,qiu2019uniqueness,QiuZhang2023games}. 
The localization to $A_j$ makes these
conditional expectations well defined for a finite random center.
Every pathwise contact is a conditional contact. Conversely,
Lemma~\ref{lem-contact-martingale} converts each conditional
contact, on each $A_j$ and a smaller spatial ball, into a pathwise
contact by adding or subtracting a spatially constant martingale.
For \eqref{SHJB}, this correction preserves the residual.
Consequently the two choices of contact classes give the same
viscosity subsolutions, supersolutions and solutions when the
residual inequalities \eqref{defn-vis-sub}--\eqref{defn-vis-sup}
are kept fixed. This is an equivalence of solution notions; it
does not identify the individual contact classes.

The definition in \cite{Qiu2018viscosity} uses conditional
expectations in both the contact condition and the residual
inequalities. The preceding lemma concerns the contact condition
with the residual rule fixed; it does not assert that
pathwise essential limits of residuals agree with essential limits
of their conditional expectations. In the present definition the
contact and residual conditions are both pathwise, while
conditional expectations remain available in the proofs.

In addition, in the classical Markovian setting 
when all coefficients in problem \eqref{Control-probm} are deterministic, 
the stochastic HJB equation \eqref{SHJB} reduces to a deterministic parabolic PDE.
 Refined equivalent formulations of viscosity solutions for such equations are discussed in \cite{crandall2000lp,juutinen2001definition}. 
 After reversing time and restricting to deterministic test functions, 
 one can verify that our definition agrees with the standard notion for continuous viscosity 
 solutions, although it permits a broader class of test functions.

Finally, if the coefficients $\beta,\sigma,f$ and the terminal cost $G$ are 
deterministic functionals of time, the control, 
and the paths of $X$ and $W$, the problem is generally 
non-Markovian in the classical finite-dimensional sense. 
It can instead be viewed as a control problem on path space, 
with the histories of $X$ and $W$ as state variables. 
In this formulation, the value function is deterministic and 
may be characterized by a path-dependent PDE on an 
infinite-dimensional space. We refer to 
\cite{ekren2014viscosity,ekren2016viscosity-1,peng2011note,tang2012path} for the theory 
of viscosity solutions for such nonlinear path-dependent PDEs.
The path-dependent viscosity framework developed in 
\cite{ekren2014viscosity,ekren2016viscosity-1} has also been applied 
to certain stochastic HJB equations, including equations of the form \eqref{SHJB}. 
Those applications require the coefficients to be continuous in $\omega\in\Omega$, 
so that the test functions and the resulting viscosity solution 
can be treated pointwise. In contrast, the coefficients considered 
here are only measurable with respect to $\omega$, and no topology is imposed 
on $\Omega$. This motivates our use of touching and differential 
inequalities that hold almost surely on each nontrivial 
$\sF_\tau$-measurable event $\Omega_\tau$. In particular, 
the contact classes $\underline{\cG}u(\tau,\xi;\Omega_\tau)$ 
and $\overline{\cG}u(\tau,\xi;\Omega_\tau)$ are defined without 
conditional expectations.

\subsection{Some properties of viscosity solutions}
 
This subsection contains two basic properties of viscosity solutions. 
First, we introduce the notion of classical solutions and show that every classical solution is a viscosity solution.

\begin{defn} \label{defn-classical}
 Assume $(\cA1)$. A field $u\in\mathscr C_{\sF}^2$
is a classical solution of \eqref{SHJB} if $u(T,x)=G(x)$ for all $x\in\bR^d$ a.s., and, 
 there holds $d\bP\otimes dt$-a.e.  
\begin{equation}\label{eq:classical-equation}
 \mathfrak d_tu(t,x)
 +\bH(t,x,Du(t,x),D^2u(t,x),D\mathfrak d_\omega u(t,x))=0,\quad \forall x\in\bR^d.
\end{equation}
 \end{defn}

 It is typical that the classical solutions $u$ of backward SPDEs (see \cite{DuQiuTang10,Tang-Wei-2013} for instance) may not be  
 differentiable in the time variable $t$ and $(\mathfrak{d}_tu,\mathfrak{d}_{\omega}u)$ 
 may not be time-continuous but just measurable in $t$.
 This occurs already for the spatially independent BSDE
\[
 -dY_t=f_t\,dt-Z_t\,dW_t,\qquad Y_T=\eta,
\]
where $f\in\cL^2(\bR)$ and $\eta\in L^2(\Omega,\sF_T;\bR)$.
Its solution satisfies $\mathfrak d_tY=-f$ and
$\mathfrak d_\omega Y=Z$, $d\bP\otimes dt$-a.e., and these
coefficients need not admit continuous versions.
This possible lack of time continuity is reflected both in
Definitions~\ref{defn-testfunc} and~\ref{defn-viscosity}, which use
essential limits in \eqref{defn-vis-sub} and~\eqref{defn-vis-sup},
and in the definition of classical solutions above, which does not
require $\mathfrak{d}_tu,\mathfrak{d}_{\omega}u$ to be continuous in time.


\begin{prop}
\label{prop-classical-viscosity}
Under $(\cA1)$, every classical solution in the sense of
Definition~\ref{defn-classical} is a viscosity solution in the sense
of Definition~\ref{defn-viscosity}.
\end{prop}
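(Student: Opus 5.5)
The plan is to verify the exclusion formulation of Definition~\ref{defn-viscosity}, treating the subsolution case in detail; the supersolution case is symmetric. The terminal condition $u(T,\cdot)=G$ is immediate. Suppose, for contradiction, that $\phi\in\underline{\cG}u(\tau,\xi;\Omega_\tau)$ while there are $\eps,\tilde\delta>0$, an event $\Omega'_\tau\subset\Omega_\tau$ of positive probability and $\rho\in\cT^\tau$ with $\rho>\tau$ on $\Omega'_\tau$, such that $R_\phi\ge\eps$ for a.e.\ $s\in(\tau,\rho)$ and all $x\in B_{\tilde\delta}(\xi)$. After shrinking, we may assume the following:
\begin{itemize}
\item $\Omega'_\tau\subset\{|\xi|\le j\}$;
\item $\tilde\delta$ is no larger than the contact radius $\delta$;
\item $\rho$ is no larger than the contact time $\hat\tau$.
\end{itemize}
Set $\psi=\phi-u$. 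By linearity of $\mathfrak d_t$ and $\mathfrak d_\omega$, $\psi\in\mathscr C^2_\sF$. Moreover $\psi\ge0$ on $[\tau,\rho]\times B_{\tilde\delta}(\xi)$ and $\psi(\tau,\xi)=0$ on $\Omega'_\tau$.

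\emph{Linearization through one control.} For $v\in U$ write
\[
\cL^v\varphi=\beta'(s,x,v)D\varphi+\tfrac12\operatorname{tr}\bigl(\sigma\sigma'(s,x,v)D^2\varphi\bigr)+\operatorname{tr}\bigl(\sigma(s,x,v)D\mathfrak d_\omega\varphi\bigr),
\]
so that $\bH(\cdot,\varphi)=\essinf_v\{\cL^v\varphi+f(\cdot,v)\}$. If $v$ satisfies
\[
\cL^v\phi+f(\cdot,v)\le\bH(\cdot,D\phi,D^2\phi,D\mathfrak d_\omega\phi)+\eps/2
\]
at $(s,x)$, then combining this with $R_\phi\ge\eps$ and with the classical equation, which gives $-\mathfrak d_tu=\bH(\cdot,u)\le\cL^vu+f(\cdot,v)$, yields
\[
-\mathfrak d_s\psi-\cL^v\psi\ge\eps/2 \quad\text{at }(s,x).
\]
The running cost cancels, and the first-order operator in $\psi$ is linear.

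\emph{Itô--Wentzell and contradiction.} Let $\theta$ be an admissible control satisfying the near-minimality above along the state $X=X^{\tau,\xi;\theta}$ started at $(\tau,\xi)$. Let $\kappa$ be the first exit time of $X$ from $B_{\tilde\delta}(\xi)$, and set $\rho'=\rho\wedge\kappa\wedge(\tau+h)$. The Itô--Wentzell formula for $\psi\in\mathscr C^2_\sF$ composed with the Itô process $X$ gives
\[
d\psi(s,X_s)=\bigl(\mathfrak d_s\psi+\cL^{\theta_s}\psi\bigr)(s,X_s)\,ds+dM_s,
\]
where the cross-variation term produces exactly $\operatorname{tr}(\sigma D\mathfrak d_\omega\psi)$. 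The local $\cL^2$ bounds of Definition~\ref{defn-testfunc} make $M$ a true martingale up to $\rho'$ on the localized event. Taking $E_{\sF_\tau}$ on $\Omega'_\tau$ then gives
\[
E_{\sF_\tau}\psi(\rho',X_{\rho'})\le\psi(\tau,\xi)-\tfrac\eps2E_{\sF_\tau}(\rho'-\tau)<0,
\]
since $\rho'>\tau$ a.s.\ on $\Omega'_\tau$. This contradicts $\psi(\rho',X_{\rho'})\ge0$, which holds because $X_{\rho'}\in\overline B_{\tilde\delta}(\xi)$ and $\rho'\le\hat\tau$. The closed ball is handled by taking $\tilde\delta$ strictly smaller than $\delta$. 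For supersolutions the inequalities are reversed. In that case one uses an $\eps/2$-minimizer for $u$ instead of for $\phi$: then $\bH(\phi)\le\cL^v\phi+f$ and $-\mathfrak d_t u\ge\cL^v u+f-\eps/2$, and the resulting inequality for $\psi=\phi-u$ has the opposite sign.

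\emph{Main obstacle.} The hardest step is constructing $\theta$: we need near-minimality at the random point $(s,X_s)$, but this is a feedback and need not define a solvable SDE. I would first try a control that is frozen in space at the centre, namely $\theta_s=v(s,\xi)$. Here $v$ is a predictable selection of an $\eps/4$-minimizer of $\cL^v\phi+f$ at $(s,\xi)$, obtained by first-index selection over a countable dense subset of $U$ (Appendix~A). One must then control the spatial oscillation of $x\mapsto\cL^v\phi(s,x)+f(s,x,v)$ uniformly in $v$ on a small ball. This uses the local Lipschitz bounds on $\beta,\sigma$ and the modulus \eqref{eq:A1-cost-modulus}. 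The difficulty is that the moduli of $D^2\phi(s,\cdot)$ and $D\mathfrak d_\omega\phi(s,\cdot)$ are random and only square-integrable in $s$, not uniform in $s$. To handle this, I would restrict to a sub-event of $\Omega'_\tau$ of positive probability and to a time set of large measure on which these moduli are uniformly small. Then I would show that the $\eps/4$ defect incurred on the complementary set is outweighed in the Itô--Wentzell estimate, possibly replacing the constant $\eps/2$ by a time-averaged bound. The same treatment applies to $u$, whose derivatives enter only through $\psi$. If this fails, the fallback is a piecewise-constant-in-time feedback on a fine grid, with the control updated from $X$ at grid times, combined with the continuity of $\psi$.
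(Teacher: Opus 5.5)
Your skeleton is the paper's own (Appendix~B). It uses the exclusion formulation and the difference $\psi=\phi-u$, in which the running cost cancels because the classical equation gives $\mathfrak d_su+\cL^vu+f\ge0$ for every $v$. It applies the It\^o--Wentzell formula from the random pair $(\tau,\xi)$, and for supersolutions it selects a near-minimizer for $u$ rather than $\phi$. The gap is exactly the step you call the main obstacle, which is where the substance of the proof lies, and your primary route does not close.

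Write $Q(s,x,v)=\mathfrak d_s\phi+\cL^v\phi+f$, let $\omega_Q(s,\gamma)$ be its spatial modulus on a fixed ball, uniform in $v$, and let $K_Q(s)=\sup_{x,v}|Q(s,x,v)|$ on that ball. With the control $\theta_s=v(s,\xi)$ frozen at the centre, the selection error at time $s$ is bounded by $\omega_Q(s,|X_s-\xi|)$, and $|X_s-\xi|$ is of order $\sqrt{s-\tau}$. So the spatial scale of the error is tied to the time window. Shrinking the window reduces the error only by reducing the gain $\eps E[1_{\Omega'_\tau}(\rho'-\tau)]$ at the same rate.

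Nothing in $\mathscr C^2_\sF$ controls this ratio. The moduli of $D^2\phi(s,\cdot)$ and $D\mathfrak d_\omega\phi(s,\cdot)$ need not be uniform as $s\downarrow\tau$; only continuity for each $s$ and square-integrable sup norms are assumed. The Egorov-type restriction you suggest is circular: the exceptional set must be small relative to a window whose length is itself dictated by the spatial scale. On that set the defect is not $\eps/4$ but can be as large as $2K_Q(s)$. Your final step, a pathwise conditional inequality $E_{\sF_\tau}\psi(\rho',X_{\rho'})<0$ with the exact margin $\eps/2$, is also unavailable once the control realizes the near-minimizer only approximately.

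The missing idea is to decouple the feedback mesh from the horizon, i.e.\ to make your fallback quantitative. What it needs is the control-uniform spatial modulus of $Q$, not continuity of $\psi$.
\begin{enumerate}
\item Localize to $|\xi|\le N$ and $\tau\le T-b$. Fix once and for all a horizon $h$ and an exit radius $\tilde\delta/2$, and set $\eta^\theta=(\tau+h)\wedge\rho\wedge\inf\{s\ge\tau:|X^\theta_s-\xi|\ge\tilde\delta/2\}$. The fourth-moment increment bound of Lemma~\ref{lem-SDE} with Chebyshev then gives $\inf_{\theta\in\cU}E[1_{\Omega'_\tau}(\eta^\theta-\tau)]\ge m>0$, uniformly over all controls.
\item Keeping $h$ fixed, take $\theta^n_s=a(s,X^n_{t_k})$ on $(t_k,t_{k+1}]$, with $t_k=(\tau+kh/n)\wedge T$ and $a$ the first-index near-minimizer over a countable dense subset of $U$. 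This control is progressively measurable, and the SDE is solved interval by interval.
\item The lag $\Delta_n=\max_k\sup_{t_k\le s\le t_{k+1}}|X^n_s-X^n_{t_k}|$ satisfies $E\Delta_n^4\le Ch^2/n$. The selection error is at most $E\int_0^T\omega_Q(s,\gamma)\,ds+2E[1_{\{\Delta_n>\gamma\}}\int_0^TK_Q(s)\,ds]$. This vanishes as $n\to\infty$ and then $\gamma\downarrow0$ by dominated convergence, using only $\omega_Q(s,\gamma)\to0$ for a.e.\ $(\omega,s)$, $\omega_Q\le2K_Q$ and $K_Q\in\cL^2$. No uniformity in $s$ is required.
\item Drop the $u$-part via $\mathfrak d_su+\cL^vu+f\ge0$ before selecting, and take the contradiction in unconditional expectation. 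With your margin $\eps$ and an $\eps/4$-selector, $0\le E[1_{\Omega'_\tau}\int_\tau^{\eta^{\theta^n}}Q(s,X^n_s,\theta^n_s)\,ds]\le-\tfrac34\eps m+o(1)$.
\end{enumerate}
The supersolution case is identical, with the selection and the modulus taken for the $u$-field.
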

As the proof uses some techniques from subsequent sections, we postpone it to the appendix.
The second property is a stability result for viscosity solutions.
 
\begin{thm}[Stability under local $\cS^2$ convergence]
\label{thm-viscosity-stability}
Let $(\beta_n,\sigma_n,f_n,G_n)$, $n\ge1$, and
$(\beta,\sigma,f,G)$ satisfy $(\cA1)$ on the same Brownian
probability space and with the same control space $U$. Suppose
that their linear-growth and polynomial-growth bounds hold with
common deterministic constants $K,C,p_0$. For every $R<\infty$,
define
\begin{align}
 d_{n,R}(s)&=\sup_{\substack{|x|\le R\\v\in U}}
 \bigl(|\beta_n-\beta|+{|\sigma_n-\sigma|}
                         +|f_n-f|\bigr)(s,x,v),\notag\\*
 g_{n,R}&=\sup_{|x|\le R}|G_n(x)-G(x)|,
 \label{eq:stability-data-errors}
\end{align}
and assume
\begin{equation}\label{eq:stability-data-convergence}
 \|d_{n,R}\|_{\cL^2(\bR)}+\|g_{n,R}\|_{L^2(\Omega)}
       \longrightarrow0\qquad\text{for every }R<\infty.
\end{equation}
Write $\bH_n$ for the corresponding Hamiltonians. If $u_n$ are
viscosity subsolutions (resp. supersolutions) for $(\bH_n,G_n)$,
$u\in\cS^2(C_{\mathrm{loc}}(\bR^d))$, and
\begin{equation}\label{eq:stability-field-convergence}
 \|u_n-u\|_{\cS^2(C(\overline B_R))}\longrightarrow0
       \qquad\text{for every }R<\infty,
\end{equation}
then $u$ is a viscosity subsolution (resp. supersolution) for
$(\bH,G)$. No polynomial-growth bound on $u_n$ or $u$ is
required. The local spatial Lipschitz constants in $(\cA1)$
may depend on $n$.
\end{thm}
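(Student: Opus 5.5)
We argue for subsolutions; supersolutions follow by the symmetric argument with lower contacts and $-R_\phi$. The terminal inequality is immediate. For each $R$, \eqref{eq:stability-field-convergence} and \eqref{eq:stability-data-convergence} give, along a subsequence, $u_n(T,\cdot)\to u(T,\cdot)$ and $G_n\to G$ uniformly on $\overline B_R$ a.s. A diagonal subsequence over integer $R$ then passes $u_n(T,x)\le G_n(x)$ to the limit for all $x$ a.s.

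For the interior inequality we use the exclusion formulation of Definition~\ref{defn-viscosity} and argue by contradiction. Suppose $\phi\in\underline\cG u(\tau,\xi;\Omega_\tau)$ with contact data $(\delta,\hat\tau)$. Suppose also there are $\eps,\tilde\delta$, an event $\Omega'_\tau\subset\Omega_\tau$ and a stopping time $\rho>\tau$ on $\Omega'_\tau$ such that $R_\phi\ge\eps$ for a.e.\ $s\in(\tau,\rho)$ and all $x\in B_{\tilde\delta}(\xi)$. Localizing to $\Omega'_\tau\cap\{|\xi|\le j\}$ of positive probability, we may assume $\xi$ is bounded. Shrinking, we replace $\hat\tau,\rho$ by $\hat\tau\wedge\rho\wedge(\tau+h)$ and $\delta,\tilde\delta$ by a common $r$.

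Next we make the contact strict. Put $\tilde\phi(s,x)=\phi(s,x)+\kappa|x-\xi|^4+\kappa(s-\tau)$ for $s\ge\tau$. The term $|x-\xi|^4$ is adapted after $\tau$ with $\sF_\tau$-measurable center; write it via $1_{\{s>\tau\}}$, as in the exclusion proof. Its first and second derivatives at $\xi$ vanish to order $r^2$. It changes $\mathfrak d_s$ by $\kappa$ and leaves $\mathfrak d_\omega$ unchanged. For small $\kappa,r$ (possibly after a further localization making $|D\phi|,|D^2\phi|$ bounded near $\xi$ on a positive-probability event) we keep $R_{\tilde\phi}\ge\eps/2$ on the cylinder. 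We then have $\tilde\phi-u\ge\kappa r^4$ on the lateral boundary $|x-\xi|=r$, and $\tilde\phi-u\ge\kappa(\hat\tau-\tau)$ at time $\hat\tau$.

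To perturb $u$ into $u_n$, we note that $\tilde\phi-u_n$ on $[\tau,\hat\tau]\times\overline B_r(\xi)$ is within $\Delta_n:=\sup_{s,|x|\le j+1}|u_n-u|$ of $\tilde\phi-u$, and $\Delta_n\to0$ in $L^2$, hence a.s.\ along a subsequence. We do not try to touch $u_n$ pathwise at a point, which would require an optimal random center and a random time. Instead we use Lemma~\ref{lem-contact-martingale} in the converse direction: it suffices to build a test with equality at a contact point and the conditional criterion. Take the constant shift $\phi_n=\tilde\phi-(\tilde\phi-u_n)(\tau,\xi)$, which satisfies equality at $(\tau,\xi)$ and has the same residual. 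Its minimum of $\phi_n-u_n$ over the cylinder is $\ge -2\Delta_n$. This is not a contact, so we need an optimal point. Let $\tau_n$ be the first time after $\tau$ at which $\inf_{x\in \overline B_r(\xi)}(\tilde\phi-u_n)(s,x)-\Delta_n$-level events occur, and let $\xi_n$ be a measurable minimizer, chosen via Appendix~A. Running the Snell-envelope correction of Lemma~\ref{lem-contact-martingale} from $(\tau_n,\xi_n)$ then produces $\phi_n+N^n\in\underline\cG u_n(\tau_n,\xi_n;A_n)$ with the same residual. On $A_n$ the minimizer stays interior, since the boundary values exceed the interior ones by $\kappa r^4-2\Delta_n>0$ on a set of probability tending to that of the localized event.

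The last step, which we expect to be the main obstacle, transfers the residual from $\bH$ to $\bH_n$. Since $u_n$ is a subsolution, the essential liminf of $R^n_{\phi_n}$ at $(\tau_n^+,\xi_n)$ is $\le0$ a.s.\ on $A_n$, where $R^n_{\phi_n}$ is the residual built with $\bH_n$. On the other hand, $|R^n_{\phi_n}-R_{\tilde\phi}|\le C(1+|D\tilde\phi|+|D^2\tilde\phi|+|D\mathfrak d_\omega\tilde\phi|)\,d_{n,j+1}(s)$. The difficulty is that this is only $\cL^2$-small, not pointwise small, whereas the essential liminf is pathwise. We would handle it by choosing $\tau_n$ and the cylinder so that the integrated error $\int_{\tau}^{\hat\tau}d_{n,j+1}(s)\,ds$ is small. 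Then on a positive-measure subset of the time interval after $\tau_n$ the error is below $\eps/4$, which forces $R^n_{\phi_n}\ge\eps/4$ on a full-measure subset. This relies on the exclusion formulation, which only needs a.e.\ $s$ in an interval $(\tau_n,\rho_n)$. To get such an interval we would define $\rho_n$ as the first time the running integral of $1_{\{d_{n,j+1}C(\cdot)>\eps/4\}}$ becomes positive, exactly as in the construction of $\rho_{n,k}$ in the equivalence argument. We then need $\rho_n>\tau_n$ with positive probability, which follows from Chebyshev and the $\cL^2$ convergence after passing to a further subsequence along which $d_{n,j+1}\to0$ $d\bP\otimes ds$-a.e. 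This contradicts the exclusion property for $u_n$, and hence proves the claim.
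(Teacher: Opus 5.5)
Your argument breaks at the step you flag as the main obstacle, and the remedy you propose does not work. You need a stopping time $\rho_n>\tau_n$ with $C(s)\,d_{n,j+1}(s)\le\eps/4$ for a.e.\ $s\in(\tau_n,\rho_n)$. However, neither $\cL^2$-smallness nor $d\bP\otimes ds$-a.e.\ convergence of $d_{n,R}$ says anything about right neighborhoods of a given time. For a counterexample, take $\beta_n=\beta$, $\sigma_n=\sigma$, $G_n=G$ and $f_n=f+\mathbf 1_{E_n}(s)$, where $E_n\subset[0,T]$ is open and dense with $|E_n|\le2^{-n}$. All hypotheses hold (with $C$ replaced by $C+1$), and $d_{n,R}=\mathbf 1_{E_n}\to0$ in $\cL^2$ and, by Borel--Cantelli, a.e. Yet $E_n$ meets every interval $(t,t+h)$ in positive measure. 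So your running integral is positive immediately after $\tau_n$, and $\rho_n=\tau_n$ for every $n$. Since here $R^{\bH_n}_\phi=R^{\bH}_\phi-\mathbf 1_{E_n}$, the strict sign can genuinely be lost on a set of times accumulating at every point. Then no choice of contact point contradicts the pathwise essential liminf for $u_n$.

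The missing idea, which is what the paper uses, is to move the $\cL^2$-small Hamiltonian error into the test field instead of into the time interval. Keep $F=\tilde\phi$ fixed and let $a_n(s)=\sup_{|x|\le R}|(\bH_n-\bH)(s,x,DF,D^2F,D\mathfrak d_\omega F)|$. Set $q_n(s)=E_{\sF_s}\int_s^Ta_n(r)\,dr$. This is a spatially constant element of $\mathscr C^2_\sF$ with $\mathfrak d_sq_n=-a_n$ and $D\mathfrak d_\omega q_n=0$, so $R^{\bH_n}_{F+q_n}\ge R^{\bH}_F$ for a.e.\ $s$ and all $|x|\le R$. Moreover $\|q_n\|_{\cS^2}\le2\sqrt T\|a_n\|_{\cL^2}\to0$. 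Here $a_n\to0$ in $\cL^2$ because $d_{n,R}$ is bounded by the common growth constants, so dominated convergence applies. The residual error thus becomes an $\cS^2$-small perturbation of the field, which the contact step absorbs exactly like $u_n-u$.

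The contact step must also be made precise. Lemma~\ref{lem-contact-martingale} needs the conditional criterion at the new point. That is, the new time must be optimal for the reward $Y^n_s=\max_{|z|\le r}(u_n-F-q_n)(s\wedge\rho,\xi+z)$, so a level-crossing time does not suffice. Instead, $\tau_n$ must be the first contact time $\gamma_n$ of the Snell envelope of $Y^n$, extended before $\tau$ by $E_{\sF_s}Y^n_\tau$. The correcting martingale must then come from that envelope's Doob--Meyer decomposition.

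Your pathwise interiority argument is valid only at $s=\tau$. For $s>\tau$ nothing bounds $(\tilde\phi-u)(s,\xi)$ from above, so the maximizer at $\gamma_n$ may lie on $|x-\xi|=r$. What works is to combine Snell optimality with $Y^n_\tau\ge-e_n$ and $Y^n_{\gamma_n}\le e_n-\kappa(\gamma_n-\tau+|\xi_n-\xi|^4)$, where $e_n=\sup_{s,|x|\le R}|u_n-u|+\sup_s|q_n(s)|$. These give $E[\mathbf 1_A(\gamma_n-\tau+|\xi_n-\xi|^4)]\le2Ee_n/\kappa\to0$. Fix a deterministic $T_0<T$ with $\tau<T_0$ on $A$. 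Then the events $A_n=A\cap\{\gamma_n<T_0,\ \gamma_n<\rho,\ |\xi_n-\xi|<r/2\}$ lie in $\sF_{\gamma_n\wedge T_0}$ and satisfy $\bP(A_n)\to\bP(A)$.

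Two smaller points remain. First, $\mathbf 1_{\{s>\tau\}}|x-\xi|^4$ jumps at $\tau$, so it is not in $\mathscr C^2_\sF$. Expand it in $x$ and replace its coefficients, which are polynomials in the bounded $\xi$, by their conditional expectations $E_{\sF_s}[\cdot]$. Second, no bound on $D\phi,D^2\phi$ is needed, and none is available, since these are only square integrable in time. The penalty changes the residual by at most $\kappa$ times a deterministic constant depending only on the bounds of $\beta,\sigma$ on the ball.
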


\begin{proof}
For $F\in\mathscr C^2_\sF$ and a deterministic $R<\infty$, put
\[
 a_n(s)=\sup_{|x|\le R}\left|
 (\bH_n-\bH)(s,x,DF,D^2F,D\mathfrak d_\omega F)\right|.
\]
The spatial derivatives in this expression are evaluated at
$(s,x)$. Spatial continuity makes $a_n$ predictable by taking
the supremum over a countable dense subset. The common local
coefficient bounds and the test-function norms give
\[
 a_n\longrightarrow0\quad\text{in }\cL^2(\bR).
\]
Indeed,   
\[
\begin{aligned}
 a_n(s)\le C_Rd_{n,R}(s)\bigl(&1+\|DF(s)\|_{C(\overline B_R)}
                  +\|D^2F(s)\|_{C(\overline B_R)}
                  +\|D\mathfrak d_\omega F(s)\|_{C(\overline B_R)}\bigr),
\end{aligned}
\]
where $C_R$ is deterministic and independent of $n$.
By \eqref{eq:stability-data-convergence} and the finite $\cL^2(\bR)$ norms of $DF,D^2F$ and
$D\mathfrak d_\omega F$, we have $a_n\to0$ in $\cL^2(\bR)$.

Define
\begin{equation}\label{eq:stability-residual-correction}
 q_n(s)=E_{\sF_s}\int_s^T a_n(r)\,dr.
\end{equation}
Martingale representation and Doob's inequality give
\[
 dq_n=-a_n\,ds+z_n\,dW_s,\qquad q_n\ge0,\qquad
 \|q_n\|_{\cS^2(\bR)}\le2\sqrt T\,\|a_n\|_{\cL^2(\bR)}\longrightarrow0.
\]
As a spatially constant field, $q_n$ belongs to
$\mathscr C^2_\sF$. With $R^{\bH_n}$ denoting the residual
formed with $\bH_n$, we have on $\overline B_R$
\begin{equation}\label{eq:stability-residual-order}
 R^{\bH_n}_{F+q_n}\ge R^{\bH}_F,
 \qquad R^{\bH_n}_{F-q_n}\le R^{\bH}_F.
\end{equation}
Here the scalar corrections have $D\mathfrak{d}_{\omega}q_n\equiv 0$.

The terminal inequalities pass to the limit by
\eqref{eq:stability-data-convergence} and
\eqref{eq:stability-field-convergence}. We prove the differential subsolution condition.
If it fails, the equivalent formulation in
Definition~\ref{defn-viscosity} gives an upper
contact test $\phi$, an event $A\in\sF_\tau$ with
$\bP(A)>0$, and a stopping time $\rho\ge\tau$, such that
$\rho>\tau$ on $A$ and, for some $r,\eps>0$,
\begin{align}
 (\phi-u)(s,x)&\ge0,
     &&(s,x)\in[\tau,\rho]\times\overline B_r(\xi),\notag\\
 (\phi-u)(\tau,\xi)&=0,\qquad
 R^{\bH}_\phi(s,x)\ge2\eps,
     &&s\in(\tau,\rho),\ x\in\overline B_r(\xi).
 \label{eq:stability-strict-cylinder}
\end{align}
These assertions hold a.s. on $A$, with the residual
inequality understood for all $x$ almost everywhere in time.
 The closed ball is chosen strictly inside both original
spatial neighborhoods. By reducing $A$ while preserving positive probability
 if necessary, we may
assume $|\xi|\le K_0$ and $\tau\le T-b$ there, for deterministic
$K_0<\infty$ and $b>0$. Extend $\xi$ by zero off $A$ and fix
$R>K_0+r$.

Choose a smooth compactly supported cutoff $\chi$ equal to one
on a neighborhood of $\overline B_R$, and set
\[
 P_s(x)=\chi(x)\left(|x|^2-2x\cdot E_{\sF_s}\xi
                                  +E_{\sF_s}|\xi|^2\right).
\]
The bounded variables $\xi$ and $|\xi|^2$ and their
Brownian martingale representations show that
$P\in\mathscr C^2_\sF$. 
For $s\ge\tau$ and $x\in\overline B_R$,
$P_s(x)=|x-\xi|^2$ and its drift and noise vanish. Consequently,
for a sufficiently small deterministic $\kappa>0$, the field
\[
 F(s,x)=\phi(s,x)+\kappa P_s(x)+\kappa(s-\tau)^+
\]
belongs to $\mathscr C^2_\sF$ and satisfies, on the cylinder in
\eqref{eq:stability-strict-cylinder},
\begin{equation}\label{eq:stability-contact-penalty}
 F-u\ge\kappa\bigl(|x-\xi|^2+s-\tau\bigr),
 \qquad R^{\bH}_F\ge\eps.
\end{equation}
In fact the absolute change in the residual is at most
\[
 \kappa\left[1+\sup_{v\in U}
 \bigl(2r|\beta(s,x,v)|+{|\sigma(s,x,v)|^2}\bigr)\right],
\]
which has a deterministic bound on this ball. Construct $q_n$
from this fixed $F$ by \eqref{eq:stability-residual-correction}
and put $F_n=F+q_n$. Then
\begin{equation}\label{eq:stability-strict-approx-residual}
 R^{\bH_n}_{F_n}\ge\eps
 \quad\text{on the same stopped cylinder.}
\end{equation}

For $s\ge\tau$, define
\[
 Y_s^n=\max_{|z|\le r}
       (u_n-F_n)(s\wedge\rho,\xi+z),
\]
and, for $s<\tau$, define $Y_s^n=E_{\sF_s}Y_\tau^n$.
This is a continuous adapted process in $\cS^2(\bR)$: boundedness of
$\xi$ places the spatial maximum in $\overline B_R$, and the
continuous martingale extension agrees with $Y_\tau^n$ at $\tau$.
Let $Z^n$ be its Snell envelope and let
\[
 \gamma_n=\inf\{s\ge\tau:Z_s^n=Y_s^n\}.
\]
Lemma~\ref{lem-snell-envelope}(iii) gives optimality,
$Z_\tau^n=E_{\sF_\tau}Y_{\gamma_n}^n$. Since the reward is
constant after $\rho$, $Z_\rho^n=Y_\rho^n$ and
$\tau\le\gamma_n\le\rho$. By Theorem~\ref{thm-MS} on the
compact offset ball, choose an $\sF_{\gamma_n}$-measurable
maximizer $\xi_n\in\overline B_r(\xi)$ at $\gamma_n$.
Set
\[
 e_n=\sup_{s,\,|x|\le R}|u_n-u|(s,x)+\sup_s|q_n(s)|.
\]
Then $Ee_n\to0$. On $A$, evaluation at $\xi$ gives
$Y_\tau^n\ge-e_n$, while \eqref{eq:stability-contact-penalty}
gives
\[
 Y_{\gamma_n}^n\le e_n-
       \kappa\bigl(\gamma_n-\tau+|\xi_n-\xi|^2\bigr).
\]
Optimality and $A\in\sF_\tau$ therefore imply
\begin{equation}\label{eq:stability-contact-convergence}
 E\left[1_A\bigl(\gamma_n-\tau+|\xi_n-\xi|^2\bigr)\right]
       \le\frac{2Ee_n}{\kappa}\longrightarrow0.
\end{equation}
This implies $\bP(A\cap\{\gamma_n=\rho\})\to0$. Indeed,
for every $a>0$ this probability is at most
\[
 \bP(A\cap\{\rho-\tau\le a\})
       +a^{-1}E[1_A(\gamma_n-\tau)],
\]
and we let $n\to\infty$ followed by $a\downarrow0$.
Put $T_0=T-b/2$, $\tau_n=\gamma_n\wedge T_0$ and
\[
 A_n=A\cap\{\gamma_n<T_0,\ \gamma_n<\rho,
                                      \ |\xi_n-\xi|<r/2\}.
\]
Since $A_n\in\sF_{\gamma_n}$ and
$A_n\subset\{\gamma_n<T_0\}$, we have
$A_n\in\sF_{\gamma_n\wedge T_0}=\sF_{\tau_n}$. Moreover,
$\bP(A_n)\to\bP(A)>0$, by
\eqref{eq:stability-contact-convergence} and $\tau\le T-b$ on $A$.
In particular, no restriction to a future-measurable event is
used in the conditional optimality argument.

Write the Snell decomposition as $Z^n=\mathcal M^n-K^n$,
with the initial value included in the square-integrable
martingale $\mathcal M^n$. Lemma~\ref{lem-snell-envelope}(ii)
gives $E(K_T^n)^2<\infty$. Define the scalar martingale
\[
 N_s^n=E_{\sF_s}[\mathcal M_T^n-K_{\tau_n}^n].
\]
For $s\ge\tau_n$,
\[
 N_s^n=Z_s^n+K_s^n-K_{\tau_n}^n\ge Y_s^n,
\]
and $N_{\tau_n}^n=Y_{\tau_n}^n$ on $A_n$. Hence $F_n+N^n$
is a pathwise upper contact test for $u_n$ at $(\tau_n,\xi_n;A_n)$
on $B_{r/2}(\xi_n)$. Here $\xi_n$ is extended by zero off $A_n$;
this extension is $\sF_{\tau_n}$-measurable. Its contact horizon
is $\rho$ on $A_n$ and $T$ elsewhere. Pasting at $\tau_n$
makes this a stopping time globally strictly greater than
$\tau_n$, since $\tau_n\le T_0<T$.
The correction $N^n$ belongs to $\mathscr C^2_\sF$ and has zero
drift and spatial derivatives. Thus
\eqref{eq:stability-strict-approx-residual} contradicts the
subsolution property of $u_n$.

For supersolutions, start from a lower contact test with
$R^{\bH}_\phi\le-2\eps$, take
$F=\phi-\kappa P-\kappa(s-\tau)^+$ and $F_n=F-q_n$, and use
the reward $\max_{|z|\le r}(F_n-u_n)(s\wedge\rho,\xi+z)$.
The same estimates locate a contact, and the corrected lower
test is $F_n-N^n$, with strictly negative residual. This gives
the supersolution condition and completes the proof.
\end{proof}

\subsection{Existence of the viscosity solution}
 
 We use the It\^o-Kunita-Wentzell-Krylov formula for compositions of
random fields and controlled state processes; see
\cite[pp.~118--119]{kunita1981some} and
\cite[Theorem~3.1]{Krylov_09}. The latter formulation applies to
the temporal integrability in Definition~\ref{defn-testfunc}.
Throughout this work, we define for any $\phi\in\mathscr C^2_{\sF}$ and $v\in U$,
\begin{align*}
\mathscr L^{v}\phi(t,x)=
 \mathfrak{d}_t \phi (t,x)+\text{tr}\left(\frac{1}{2}\sigma \sigma'(t,x,v) D^2\phi(t,x)
     	+\sigma(t,x,v) D\mathfrak{d}_{\omega}\phi(t,x)\right)       +D\phi(t,x)\beta(t,x,v).
\end{align*}
\begin{lem}\label{lem-ito-wentzell}
Let assumption $(\cA1)$ hold and suppose $u\in\mathscr C_{\sF}^2$.
For each stopping time $\tau\in\mathcal T^0$, initial state
$\xi\in L^0(\Omega,\sF_\tau;\bR^d)$ and control $\theta\in\cU$, put
$X=X^{\tau,\xi;\theta}$. Then, a.s., simultaneously for all
$t\in[\tau,T]$,
\begin{align*}
 u(t,X_t)-u(\tau,\xi)
 &=\int_\tau^t\mathscr L^{\theta_s}u(s,X_s)\,ds\\
 &\quad+\int_\tau^t\left[
 \mathfrak d_\omega u(s,X_s)
 +Du(s,X_s)\sigma(s,X_s,\theta_s)\right]dW_s.
\end{align*}
The stochastic integral, extended by zero before $\tau$, is a
continuous local martingale. For every $R>0$, it is square
integrable after stopping at
\[
 \rho_R=\inf\{s\in[\tau,T]:|X_s|\ge R\}\wedge T,
 \qquad \inf\emptyset=+\infty.
\]
In particular, its conditional expectation given $\sF_\tau$ is
zero at every stopping time $\eta$ with $\tau\le\eta\le\rho_R$.
\end{lem}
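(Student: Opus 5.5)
The plan is to reduce to the It\^o--Kunita--Wentzell--Krylov formula \cite[Theorem~3.1]{Krylov_09} on a localized problem, then remove the localization.

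\textbf{Step 1: localize the coefficients.} Fix $R>0$ and work on $[\tau,\rho_R]$. Before $\rho_R$ the path $X$ stays in $B_R$, and on $B_R$ the test field $u$ has the regularity of Definition~\ref{defn-testfunc}: $u\in\cL^2(C^2(B_R))$, $\mathfrak d_tu\in\cL^2(C(B_R))$ and $\mathfrak d_\omega u\in\cL^2(C^1(B_R;\bR^m))$.

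\textbf{Step 2: reduce to a bounded initial state.} Localize further on $A_n=\{|\xi|\le n\}\in\sF_\tau$ and use pathwise uniqueness from Lemma~\ref{lem-SDE}(i). The stopped process $X_{\cdot\wedge\rho_R}$ is then a continuous semimartingale with bounded values on $[\tau,T]$. Its drift $\beta(s,X_s,\theta_s)$ and diffusion $\sigma(s,X_s,\theta_s)$ are predictable and bounded by $K\varpi(R)$, by $(\cA1)$(ii).

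\textbf{Step 3: apply the generalized It\^o--Wentzell formula.} Apply the formula of Krylov to the field $u$, which has the stochastic differential $du=\mathfrak d_tu\,dt+\mathfrak d_\omega u\,dW$, composed with the stopped semimartingale. This formula requires only the temporal integrability $\int_0^T\|\mathfrak d_tu\|_{C(B_R)}+\|\mathfrak d_\omega u\|^2_{C^1(B_R)}\,dt<\infty$, which holds a.s. by Definition~\ref{defn-testfunc}. Its output is precisely $\mathscr L^{\theta_s}u(s,X_s)\,ds$ plus the stochastic integral with integrand $\mathfrak d_\omega u+Du\,\sigma$. The cross-variation term $\operatorname{tr}(\sigma D\mathfrak d_\omega u)$ arises from the covariation between $D u$ and $X$.

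To start at a stopping time rather than at $0$, I will apply the formula to the process $\tilde X_t=\xi1_{\{t\ge\tau\}}+\cdots$, obtained by setting the coefficients to zero before $\tau$. Alternatively, I will take the difference of the identity at $t$ and at $\tau$. Because both sides are continuous in $t$, the identity holds simultaneously for all $t$. Letting $n\to\infty$ and then $R\to\infty$ gives the identity on all of $[\tau,T]$, since $\rho_R\uparrow T$ by nonexplosion.

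\textbf{Step 4: square integrability after stopping.} On $[\tau,\rho_R]$ the integrand is bounded by
\[
\|\mathfrak d_\omega u(s)\|_{C(B_R)}+K\varpi(R)\|Du(s)\|_{C(B_R)}.
\]
Its $d\bP\otimes ds$ square integral is finite by the $\cL^2(C^2(B_R))$ and $\cL^2(C^1(B_R))$ memberships, and this bound does not depend on $\xi$. The stopped integral is therefore an $L^2$-martingale, and optional sampling gives zero conditional expectation given $\sF_\tau$ at every $\eta$ with $\tau\le\eta\le\rho_R$. Before stopping, the integral is a continuous local martingale, with localizing sequence $\rho_R$.

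\textbf{Main obstacle.} I expect the main difficulty to be justifying Krylov's formula under merely $\cL^2$-in-time, locally-in-space regularity. In particular, $\mathfrak d_tu$ is not continuous in time, and the composition is taken along a process started at a random time from a random state. I would handle this with a spatial cutoff $\chi_R u$, which has globally finite norms, as in Remark~\ref{rmk-viscosity-interpretation}. This makes Krylov's global hypotheses hold, and the cutoff does not change the field along the path before $\rho_R$.
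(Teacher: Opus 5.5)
Your overall architecture agrees with the paper's. You use the cutoff $\chi_Ru$ to obtain global norms, localize on $\{|\xi|\le n\}$ and at $\rho_R$, and apply Krylov's formula. Your Step~4 is exactly the paper's closing $L^2$ estimate on $[\tau,\rho_R]$.

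The gap is starting the composition at the stopping time $\tau$ from the $\sF_\tau$-measurable state $\xi$. This is where the paper's proof does most of its work, and your closing remark about the cutoff does not address it. Krylov's Theorem~3.1 composes the field with a continuous It\^o process on $[0,T]$ issued from a deterministic point. Your $\tilde X_t=\xi\mathbf 1_{\{t\ge\tau\}}+\cdots$, obtained by setting the coefficients to zero before $\tau$, is constant before $\tau$ and equals $\xi$ at $\tau$. It therefore jumps at $\tau$, or, if you put $\xi$ before $\tau$, it is not adapted. Either way the theorem does not apply.

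Your alternative of subtracting the identities at $t$ and at $\tau$ presupposes a continuous adapted It\^o extension of $X$ to $[0,\tau]$ ending at $\xi$. You never construct one. The natural candidate $E_{\sF_{t\wedge\tau}}\xi=E\xi+\int_0^{t\wedge\tau}\gamma_s\,dW_s$ has $\gamma$ merely square integrable in time, and in general no extension with bounded coefficients exists. Since $\|D^2u(s)\|_{C(B_R)}$ is also only square integrable in time, the pointwise second-order term $\int_0^\tau|\gamma_s|^2\|D^2u(s)\|_{C(B_R)}\,ds$ is not controlled. Making this work would require the distributional form of the formula, with pointwise evaluation only after restricting to $[\tau,t]$, and you have not given that argument.

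The paper applies Krylov's formula only from deterministic pairs $(r,x)$. It does so for $u_R=\chi_Ru$ and the projected coefficients $\beta(s,\pi_Rx,v)$, $\sigma(s,\pi_Rx,v)$. It then pastes these identities over the finitely many events $\{\tau_j=r,\xi_j=x\}\in\sF_r$. Here $\tau_j\downarrow\tau$ are finite-valued stopping times and $\xi_j\to\xi$ are uniformly bounded simple $\sF_\tau$-measurable states.

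It passes to the limit using global SDE stability, $E\sup_s|Y^j_s-Y_s|^2\to0$. This is why it projects the coefficients instead of merely stopping at $\rho_R$. The integrands are handled by dominated convergence, using the spatial continuity and global $L^2$ supremum bounds of $u_R$ and its derivatives. The identity is obtained first at rational $t$ and then at all $t$ by continuity. Pathwise uniqueness then identifies the projected state with $X$ up to $\rho_R$.

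A repair closer to your idea also works. Apply Krylov's formula to $u_R(t,x+x_t)$, where $x_t=\int_0^{t\wedge\rho_R}\mathbf 1_{\{s>\tau\}}\beta(s,X_s,\theta_s)\,ds+\int_0^{t\wedge\rho_R}\mathbf 1_{\{s>\tau\}}\sigma(s,X_s,\theta_s)\,dW_s$ is a continuous adapted It\^o process starting at $0$. Then substitute $x=\xi$ on $[\tau,\rho_R]$, where $X_t=\xi+x_t$. This substitution still needs approximation by simple $\sF_\tau$-measurable states, using locality of the stochastic integral on $\sF_\tau$-events and continuity in $x$. With either repair, the rest of your argument goes through.
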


\begin{proof}
First suppose $\xi$ is bounded and fix $R>\|\xi\|_{L^\infty}+1$.
Using the radial projection $\pi_R$, set
\[
 \beta_R(s,x,v)=\beta(s,\pi_Rx,v),\qquad
 \sigma_R(s,x,v)=\sigma(s,\pi_Rx,v).
\]
These coefficients are bounded and globally Lipschitz in $x$,
uniformly in the control. Let $\chi_R$ be a deterministic smooth
cutoff equal to one on $B_{R+1}$ and supported in $B_{R+2}$,
and put $u_R=\chi_Ru$. The defining norms of $u_R$ are finite
over the whole space. For deterministic initial pairs,
\cite[Theorem~3.1]{Krylov_09} gives the asserted formula for
$u_R$ and the projected dynamics. Indeed, the composed
coefficients admit predictable representatives equal to them
$d\bP\otimes ds$-a.e.; their bounds and the test-function norms
give the spatial continuity and integrability required by that
theorem. Continuity of the composition makes the identity
holds for all time $t\in[\tau,T]$ a.s.

Choose finite-valued stopping times $\tau_j\downarrow\tau$ with
$0\le\tau_j-\tau\le2^{-j}T$, and uniformly bounded simple
$\sF_\tau$-measurable states $\xi_j\to\xi$ a.s.
Let $Y$ and $Y^j$ solve the projected SDE with control $\theta$
from $(\tau,\xi)$ and $(\tau_j,\xi_j)$, respectively.
On each event $\{\tau_j=r,\xi_j=x\}\in\sF_r$, apply the
deterministic-start formula from $(r,x)$, with endpoint
$t\vee\tau_j$. Pasting these finitely many identities gives
the formula for $(\tau_j,\xi_j)$.
The bounded-coefficient increment estimate and global SDE
stability imply
\[
 E\sup_{\tau_j\le s\le T}|Y_s^j-Y_s|^2
 \le C_R\left(E|\xi_j-\xi|^2+2^{-j}T\right)
 \longrightarrow0,\quad \text{as }j\to\infty.
\]
The spatial continuity of $u_R,Du_R,D^2u_R,\mathfrak{d}_\omega u_R, \mathfrak{d}_t u_R, D\mathfrak{d}_\omega u_R$, together with their global $L^2$ supremum bounds,
therefore gives convergence of the drift integrands in
$L^1(\Omega\times[0,T])$ and of the stochastic integrands in
$L^2(\Omega\times[0,T])$, after extending the integrands by zero
before their respective initial times. 
Passing to the limit first for rational $t$ and then using
continuity proves the identity for $Y$ for all
$t\in[\tau,T]$ a.s. Pathwise uniqueness gives $Y=X$ up to
$\rho_R$, where the projected coefficients and $u_R$ agree
with the original ones. We thus obtain the asserted stopped
identity. Nonexplosion allows $R\to\infty$.

For a merely finite $\xi$, put
$A_N=\{|\xi|\le N\}\in\sF_\tau$ and
$\xi_N=\xi1_{A_N}$. Apply the bounded-state result to $\xi_N$.
On $A_N$, pathwise uniqueness and locality of stochastic
integration identify its formula with the one for $\xi$.
Since $A_N\uparrow\Omega$ a.s., the desired identity follows.
Finally, for every $R>0$,
\begin{align*}
 &E\int_\tau^{\rho_R}
 \left|\mathfrak d_\omega u(s,X_s)
       +Du(s,X_s)\sigma(s,X_s,\theta_s)\right|^2ds
       \\
 &\qquad
 \le C_R E\int_0^T\left(
 \|\mathfrak d_\omega u(s)\|_{C(B_{R+1})}^2
 +\|Du(s)\|_{C(B_{R+1})}^2\right)ds<\infty.
\end{align*}
Here $\rho_R=\tau$ if $|\xi|\ge R$. This proves the stopped
square-integrability and the conditional-mean assertion; letting
$R\to\infty$ also proves the local-martingale assertion.
\end{proof}

\begin{thm}\label{thm-existence}
 Let $(\cA1)$ hold. The value function $V$ defined by \eqref{eq-value-func} is a viscosity solution of the stochastic Hamilton-Jacobi-Bellman equation \eqref{SHJB}.
\end{thm}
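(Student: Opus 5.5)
We verify the two halves of Definition~\ref{defn-viscosity} separately, using the exclusion formulation in both cases. First, $V\in\cS^2(C_{\mathrm{loc}}(\bR^d))$ follows from Proposition~\ref{prop-value-func}(iv)--(v), and $V(T,x)=G(x)$ gives both terminal inequalities. The main tools are the two-sided bound \eqref{eq-stopping-control} of Proposition~\ref{prop-value-func}(ii), the DPP of Theorem~\ref{thm-DPP} together with the $a$-optimal controls of Proposition~\ref{prop-value-func}(i), and the It\^o--Wentzell formula of Lemma~\ref{lem-ito-wentzell} applied along controlled trajectories started at $(\tau,\xi)$. Throughout we localize to $\Omega_\tau\cap\{|\xi|\le N\}$, so that $\xi$ is bounded and all moment conditions hold.

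\textbf{Subsolution.} Suppose the property fails. Then there are $\phi\in\underline{\cG}V(\tau,\xi;\Omega_\tau)$, an event $\Omega'_\tau$, $\eps,\tilde\delta>0$ and $\rho>\tau$ on $\Omega'_\tau$ with $R_\phi\ge\eps$ a.e.\ in $s\in(\tau,\rho)$, for all $x\in B_{\tilde\delta}(\xi)$. Since $\bH$ is an essential infimum over a countable dense set of $U$, $R_\phi\ge\eps$ gives $\mathscr L^v\phi+f(\cdot,\cdot,v)\le-\eps$ for every $v$ in that set, and hence for all $v$ by continuity. Take a constant control $v$ and set $X=X^{\tau,\xi;v}$. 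Let $\eta$ be the minimum of $\hat\tau$, $\rho$, $T$, and the exit time of $X$ from $B_{\delta\wedge\tilde\delta}(\xi)$; on $\Omega'_\tau$ we have $\eta>\tau$ a.s. Apply Lemma~\ref{lem-ito-wentzell}, take $E_{\sF_\tau}$, and use $\phi\ge V$ on the cylinder together with the left inequality of \eqref{eq-stopping-control}. This gives
\[
V(\tau,\xi)\le E_{\sF_\tau}\Big[\int_\tau^\eta f\,ds+\phi(\eta,X_\eta)\Big]\le \phi(\tau,\xi)-\eps E_{\sF_\tau}(\eta-\tau).
\]
Since $\phi(\tau,\xi)=V(\tau,\xi)$, it follows that $E_{\sF_\tau}(\eta-\tau)\le 0$ on $\Omega'_\tau$, contradicting $\eta>\tau$ there. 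This half is routine.

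\textbf{Supersolution.} This is the harder half. Suppose $\phi\in\overline{\cG}V(\tau,\xi;\Omega_\tau)$ with $R_\phi\le-\eps$ on a stopped cylinder over $\Omega'_\tau$. For each $v$ and each $(s,x)$ in the cylinder this gives $\mathscr L^v\phi+f\ge\eps$. Fix a small deterministic $h>0$ and put $\hat\eta=(\tau+h)\wedge\hat\tau\wedge\rho$. Theorem~\ref{thm-DPP} and Proposition~\ref{prop-value-func}(i), the latter applied to the DPP functional via pasting, give an $a$-optimal $\theta^a$ with
\[
V(\tau,\xi)\ge E_{\sF_\tau}\Big[\int_\tau^{\hat\eta} f(s,X_s,\theta^a_s)\,ds+V(\hat\eta,X_{\hat\eta})\Big]-a,
\]
where $X=X^{\tau,\xi;\theta^a}$. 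Let $\eta$ be the minimum of $\hat\eta$ and the exit time of $X$ from $B_{\delta'}(\xi)$, where $\delta'$ is less than both radii. The difficulty is that the trajectory may leave the ball before $\hat\eta$, where the contact $\phi\le V$ is no longer available. To handle this we penalize $\phi$: replace it by $\tilde\phi=\phi-\kappa P_s(x)$, using the quadratic field $P$ built in the proof of Theorem~\ref{thm-viscosity-stability} (which equals $|x-\xi|^2$ for $s\ge\tau$). For small $\kappa$ the residual inequality $R_{\tilde\phi}\le-\eps/2$ persists on the cylinder, and $\tilde\phi\le V-\kappa\delta'^2$ on the lateral boundary.

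Now use Proposition~\ref{prop-value-func}(ii) at $\eta$, so that $E_{\sF_\tau}[\int_\eta^{\hat\eta}f\,ds+V(\hat\eta,X_{\hat\eta})]\ge E_{\sF_\tau}V(\eta,X_\eta)$, and then apply Lemma~\ref{lem-ito-wentzell} to $\tilde\phi$. This yields
\[
V(\tau,\xi)\ge \tilde\phi(\tau,\xi)+\tfrac{\eps}{2}E_{\sF_\tau}(\eta-\tau)+\kappa\delta'^2\,\bP_{\sF_\tau}(\eta<\hat\eta)-a.
\]
Since $\tilde\phi(\tau,\xi)=V(\tau,\xi)$, we get
\[
\tfrac{\eps}{2}E_{\sF_\tau}(\eta-\tau)+\kappa\delta'^2\,\bP_{\sF_\tau}(\eta<\hat\eta)\le a.
\]
The increment estimate \eqref{eq-poly-state-increment} bounds the early-exit probability by $C h^{p/2}$ uniformly in the control. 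As a result, the left-hand side is bounded below on $\Omega'_\tau$ by a quantity $c>0$ that depends only on $h$ and the law of $\hat\eta-\tau$, and not on $a$. Taking $a<c$ gives a contradiction on a subevent of positive probability.

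The main obstacle is making this lower bound uniform in $\theta^a$ given $\sF_\tau$, since $\hat\tau$ and $\rho$ are random. I would first restrict to $\Omega''\subset\Omega'_\tau$ with $\bP(\Omega'')>0$ on which $\hat\tau\wedge\rho-\tau\ge h_0$ for a deterministic $h_0$; this is possible by countable union. Then take $h\le h_0$ so small that $Ch^{p/2}\le 1/2$. On $\Omega''$ this gives $E_{\sF_\tau}(\eta-\tau)\ge h/2$, independently of the control, which is what the argument needs.
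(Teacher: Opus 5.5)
\textbf{Gap in the subsolution half: reversed quantifier.} Here $R_\phi=-\inf_{v\in U}Q_\phi$, where $Q_\phi(s,x,v)=\mathscr L^v\phi(s,x)+f(s,x,v)$. So the hypothesis $R_\phi\ge\eps$ only says $\inf_vQ_\phi(s,x,v)\le-\eps$: for each $(\omega,s,x)$, \emph{some} control value nearly makes $Q_\phi$ negative. The conclusion ``$\mathscr L^v\phi+f\le-\eps$ for every $v$'' is what $R_\phi\le-\eps$ gives. It is valid in your supersolution half, where you use it correctly, but not here. For a constant control $v$, $Q_\phi(s,X_s,v)$ may well be positive, so the chain leading to $V(\tau,\xi)\le\phi(\tau,\xi)-\eps E_{\sF_\tau}(\eta-\tau)$ breaks.

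This half is therefore the harder one, not the routine one. You must build an admissible control that is nearly optimal for $\inf_vQ_\phi$ along the trajectory, and two obstructions arise. First, $Q_\phi$ is only measurable in $(\omega,s)$, through $\mathfrak d_s\phi$ and the coefficients, so a constant control cannot track the minimizer. Second, a measurable selector $a(s,x)$ has no regularity in $x$, so the feedback $\theta_s=a(s,X_s)$ need not define a solution of the state equation. Such a selector can be taken as the first element of a countable dense subset of $U$ whose value is within $\eps$ of the infimum.

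The paper's remedy is a lagged feedback on a mesh $t_k=(\tau+kh/n)\wedge T$ of a fixed window:
\begin{itemize}
\item Set $\theta^n_s=a(s,X^n_{t_k})$ on $(t_k,t_{k+1}]$, constructed recursively. This is progressively measurable, and $Q_\phi(s,X^n_{t_k},\theta^n_s)\le-\eps$ before the stopping time.
\item The error from replacing $X^n_{t_k}$ by $X^n_s$ is controlled by a control-uniform spatial modulus of $Q_\phi$ on a fixed ball. This modulus comes from the local Lipschitz bounds, \eqref{eq:A1-cost-modulus} and the test-field regularity, and is dominated by an $\cL^2$ bound on $\sup|Q_\phi|$.
\item The oscillation $\Delta_n=\max_k\sup_{t_k\le s\le t_{k+1}}|X^n_s-X^n_{t_k}|$ satisfies $E\Delta_n^4\le Ch^2/n$.
\item Because the control changes with $n$, you also need a duration bound uniform over all controls, $\inf_{\theta\in\cU}E[1_{\Omega'_\tau}(\eta^\theta_h-\tau)]\ge m>0$. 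This is obtained by fixing $h$ before refining the mesh.
\end{itemize}
The contradiction is then $0\le-\eps m+e_n$ with $e_n\to0$.

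\textbf{Supersolution half.} This is essentially the paper's Step 2 and is sound in substance, with two remarks.
\begin{itemize}
\item The penalization $-\kappa P$ is unnecessary. Once you stop at the exit from $B_{\delta'}(\xi)$ with $\delta'<\delta$, the endpoint already lies in the contact neighborhood. The paper simply takes a full-horizon $\alpha$-optimal control and applies the right inequality of \eqref{eq-stopping-control} directly at the control-dependent stopping time.
\item Your final uniformity step has a measurability flaw. The event $\{\hat\tau\wedge\rho-\tau\ge h_0\}$ is not $\sF_\tau$-measurable, so ``$E_{\sF_\tau}(\eta-\tau)\ge h/2$ on $\Omega''$'' is not a valid statement. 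Instead, multiply by $1_{\Omega'_\tau}$ and take unconditional expectations, choosing $h$ so that $\bP(\Omega'_\tau\cap\{\hat\tau\wedge\rho\le\tau+h\})\le\frac14\bP(\Omega'_\tau)$, as in \eqref{eq-existence-duration}.
\end{itemize}
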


{
\begin{proof}
\textbf{Step 1.}
First, in view of Proposition \ref{prop-value-func}, we have
$V\in\cS^\infty(C_{loc}(\bR^d))$ and $V(T,\cdot)=G$.
We prove that $V$ is a viscosity subsolution. For each
$\phi\in\underline\cG V(\tau,\xi;\Omega_\tau)$, with
$\tau\in\mathcal T^0$, $\Omega_\tau\in\sF_\tau$,
$\bP(\Omega_\tau)>0$, and
$\xi\in L^0(\Omega_\tau,\sF_\tau;\bR^d)$, let
$(\hat\tau,B_\delta(\xi))$ be the corresponding contact neighborhood.
Write
\[
 Q_\phi(s,x,v):=\mathscr L^v\phi(s,x)+f(s,x,v),\qquad
 R_\phi(s,x)=-\inf_{v\in U}Q_\phi(s,x,v).
\]

Suppose to the contrary that the subsolution inequality fails.
By the equivalent formulation in Definition \ref{defn-viscosity},
there exist $\eps,\tilde\delta>0$, an event
$\Omega'\in\sF_\tau$ with $\Omega'\subset\Omega_\tau$ and
$\bP(\Omega')>0$, and $\rho\in\mathcal T^\tau$ with
$\rho>\tau$ on $\Omega'$, such that
\begin{equation}\label{eq-existence-sub-residual}
 R_\phi(s,x)\geq2\eps,
 \qquad s\in(\tau,\rho),\quad x\in B_{\tilde\delta}(\xi),
 \quad\text{a.e. in time, a.s. on }\Omega'.
\end{equation}
Without loss of generality, assume $\tilde\delta<\delta\wedge1$.
Since $\xi$ is finite and $\tau<T$ on $\Omega_\tau$, we may reduce
$\Omega'$ while preserving positive probability so that
$|\xi|\leq N$ and $\tau\leq T-b$ there, for deterministic
$N<\infty$ and $b\in(0,T)$. Extend $\xi$ by zero outside $\Omega'$;
all contact inequalities below are used on $\Omega'$.

For each $\theta\in\cU$, put $X^\theta=X^{\tau,\xi;\theta}$ and
define
\[
 \zeta^\theta=\inf\{s\in[\tau,T]:
                  |X_s^\theta-\xi|\geq\tilde\delta/2\},\qquad
 \eta_h^\theta=(\tau+h)\wedge T\wedge\hat\tau\wedge\rho
                       \wedge\zeta^\theta,
\]
where $\inf\emptyset=+\infty$. By the fourth-moment increment
estimate in Lemma \ref{lem-SDE}, for $0<h\leq1$,
\begin{align}
 E_{\sF_\tau}1_{\{\zeta^\theta\leq(\tau+h)\wedge T\}}
 &\leq\frac{16}{\tilde\delta^4}
 E_{\sF_\tau}\sup_{\tau\leq s\leq(\tau+h)\wedge T}
                   |X_s^\theta-\xi|^4
 \leq\frac{Ch^2}{\tilde\delta^4}.
 \label{eq-existence-exit}
\end{align}
Here and below the constants may depend on $N$, but not on the
control. Since $\hat\tau\wedge\rho>\tau$ on $\Omega'$, choose
and fix $0<h<b\wedge1$ such that
\[
 \bP\bigl(\Omega'\cap\{\hat\tau\wedge\rho\leq\tau+h\}\bigr)
 \leq\tfrac14\bP(\Omega'),\qquad
 \frac{Ch^2}{\tilde\delta^4}\leq\tfrac14.
\]
Then it follows that
\begin{equation}\label{eq-existence-duration}
 \inf_{\theta\in\cU}
 E\bigl[1_{\Omega'}(\eta_h^\theta-\tau)\bigr]
 \geq\tfrac h2\bP(\Omega')=:m>0.
\end{equation}

We will apply Lemma \ref{lem-ito-wentzell} from the random initial
pair $(\tau,\xi)$ to $\eta_h^\theta$. The stopped state stays in
$B_{N+\tilde\delta+1}$, so the stochastic integral is square
integrable and has conditional mean zero.

The endpoint $X^\theta_{\eta_h^\theta}$ belongs to
$\overline B_{\tilde\delta/2}(\xi)\subset B_\delta(\xi)$.
Thus pathwise contact gives
\[
 (\phi-V)(\eta_h^\theta,X^\theta_{\eta_h^\theta})\ge0
 \quad\text{on }\Omega'.
\]
Taking the localized conditional expectation and using
\eqref{eq-stopping-control}, by Lemma \ref{lem-ito-wentzell}, we obtain,
a.s. on $\Omega'$,
\begin{align}
 0
 &\leq E_{\sF_\tau}
       (\phi-V)(\eta_h^\theta,X^\theta_{\eta_h^\theta})\nonumber\\
 &\leq E_{\sF_\tau}\left[
       \phi(\eta_h^\theta,X^\theta_{\eta_h^\theta})-\phi(\tau,\xi)
       +\int_\tau^{\eta_h^\theta}f(s,X_s^\theta,\theta_s)\,ds\right]
       \nonumber\\
 &=E_{\sF_\tau}\int_\tau^{\eta_h^\theta}
                   Q_\phi(s,X_s^\theta,\theta_s)\,ds.
 \label{eq-existence-upper-contact}
\end{align}

To contradict this inequality, fix a countable dense subset
$\{v_j:j\geq1\}$ of $U$. Outside a common null set,
$Q_\phi$ is continuous in $(x,v)$; let $a(s,x)$ be the first
$v_j$ satisfying
\[
 Q_\phi(s,x,v_j)
 \leq\inf_{i\geq1}Q_\phi(s,x,v_i)+\eps.
\]
The countable infimum equals the infimum over $U$, and this
first-index choice defines a
$\sP\otimes\cB(\bR^d)$-measurable map; set $a=v_1$ on the
exceptional set. For $n\geq1$, let
$t_k=(\tau+kh/n)\wedge T$, $0\leq k\leq n$, and construct
$\theta^n$ and $X^n=X^{\tau,\xi;\theta^n}$ successively by
\[
 \theta_s^n=a(s,X^n_{t_k}),\qquad t_k<s\leq t_{k+1},
 \quad 0\leq k<n.
\]
Use $v_1$ for $s\leq\tau$ and for $s>t_n$. At each step the frozen
state $X^n_{t_k}$ is $\sF_{t_k}$-measurable, so the control is
progressively measurable. For this prescribed process, $(\cA1)$
gives a unique nonexplosive strong solution on the next interval.
This proves the recursive construction without imposing any
regularity on the selector as a function of the state.

Put $Y_s^n=X^n_{t_k}$ for $t_k<s\leq t_{k+1}$, and set
\[
 \Delta_n=\max_{0\leq k<n}
       \sup_{t_k\leq s\leq t_{k+1}}|X_s^n-X^n_{t_k}|.
\]
The conditional increment estimate and the uniform fourth-moment
bound of Lemma \ref{lem-SDE} yield
\[
 E\Delta_n^4
 \leq C(h/n)^2\sum_{k=0}^{n-1}E[\varpi(X^n_{t_k})^4]
 \leq Ch^2/n\longrightarrow0.
\]
Before $\eta_h^{\theta^n}$, both $X_s^n$ and $Y_s^n$ lie in
$B_{\tilde\delta/2}(\xi)$. Hence \eqref{eq-existence-sub-residual}
and the choice of $a$ give
\[
 Q_\phi(s,Y_s^n,\theta_s^n)\leq-\eps
 \quad\text{for a.e. }s\in(\tau,\eta_h^{\theta^n}),
 \quad\text{a.s. on }\Omega'.
\]

We next estimate the error in replacing $Y_s^n$ by $X_s^n$.
On the fixed ball $B_{N+\tilde\delta+1}$, $(\cA1)$ and the
test-function norms give
\[
 \sup_{|x|\leq N+\tilde\delta+1,\,v\in U}|Q_\phi(s,x,v)|
 \leq K_s,\qquad E\int_0^T K_s^2\,ds<\infty,
\]
where one may take
\[
 K_s=C_N\bigl(1+\|\mathfrak d_s\phi(s)\|_{C(B_{N+\tilde\delta+1})}
 +\|D\phi(s)\|_{C(B_{N+\tilde\delta+1})}+\|D^2\phi(s)\|_{C(B_{N+\tilde\delta+1})}
 +\|D\mathfrak d_\omega\phi(s)\|_{C(B_{N+\tilde\delta+1})}\bigr).
\]
For $\gamma>0$, define
\[
 w(s,\gamma)=\sup\bigl\{
 |Q_\phi(s,x,v)-Q_\phi(s,y,v)|:
 x,y\in\overline B_{N+\tilde\delta},\ |x-y|\leq\gamma,\ v\in U
 \bigr\}.
\]
The control-uniform local Lipschitz bounds on $\beta,\sigma$,
condition \eqref{eq:A1-cost-modulus}, and uniform continuity of
the test-function derivatives on this ball imply
$w(s,\gamma)\to0$ as $\gamma\downarrow0$ for almost every
$(\omega,s)$. Moreover, $w(s,\gamma)\leq2K_s$. Therefore
\begin{align*}
 e_n&:=E\left[1_{\Omega'}\int_\tau^{\eta_h^{\theta^n}}
   |Q_\phi(s,X_s^n,\theta_s^n)-Q_\phi(s,Y_s^n,\theta_s^n)|\,ds\right]\\
 &\leq E\int_0^T w(s,\gamma)\,ds
       +2E\left[1_{\{\Delta_n>\gamma\}}\int_0^T K_s\,ds\right]
 \longrightarrow0,
\end{align*}
where we first let $n\to\infty$ and then $\gamma\downarrow0$,
using Cauchy--Schwarz and dominated convergence, respectively.
Multiplying \eqref{eq-existence-upper-contact} by $1_{\Omega'}$
and taking expectations, we obtain
\[
 0\leq E\left[1_{\Omega'}\int_\tau^{\eta_h^{\theta^n}}
                   Q_\phi(s,X_s^n,\theta_s^n)\,ds\right]
 \leq-\eps m+e_n,
\]
which is impossible when $n$ is large enough. Here $h$ remains fixed while the
mesh is refined. Thus,   a.s. on $\Omega_\tau$,
\[
 \operatorname*{ess\,liminf}_{(s,x)\to(\tau^+,\xi)}R_\phi(s,x)
 \leq0.
\]
Hence $V$ is a viscosity subsolution of \eqref{SHJB}.\medskip

\textbf{Step 2.}
It remains to prove that $V$ is a viscosity supersolution of
\eqref{SHJB}. Let $\phi\in\overline\cG V(\tau,\xi;\Omega_\tau)$
and let $(\hat\tau,B_\delta(\xi))$ be its contact neighborhood.
We again argue by contradiction. If the supersolution inequality
fails, the equivalent formulation gives $\eps,\tilde\delta>0$,
$\Omega'\in\sF_\tau$ with $\Omega'\subset\Omega_\tau$ and
$\bP(\Omega')>0$, and $\rho\in\mathcal T^\tau$ with
$\rho>\tau$ on $\Omega'$, such that
\[
 R_\phi(s,x)\leq-\eps,
 \qquad s\in(\tau,\rho),\quad x\in B_{\tilde\delta}(\xi),
 \quad\text{a.e. in time, a.s. on }\Omega'.
\]
Equivalently, $Q_\phi(s,x,v)\geq\eps$ for every $v\in U$ there.
Repeating the localization and stopping construction of
\textbf{Step 1}, with $\tilde\delta<\delta\wedge1$, gives a
bounded initial state and fixed $h,m>0$ satisfying
\eqref{eq-existence-duration} for the present contact.

Choose $\alpha>0$ so that $\alpha\bP(\Omega')<\eps m$.
By Proposition~\ref{prop-value-func}(i), there is a full-horizon
control $\theta^\alpha\in\cU$ such that
$J(\tau,\xi;\theta^\alpha)\leq V(\tau,\xi)+\alpha$.
Put $X=X^{\tau,\xi;\theta^\alpha}$ and
$\eta=\eta_h^{\theta^\alpha}$.
Pathwise contact gives
$(\phi-V)(\eta,X_\eta)\le0$ on $\Omega'$ and hence
$E_{\sF_\tau}(\phi-V)(\eta,X_\eta)\leq0$ there.
By Lemma \ref{lem-ito-wentzell} and the right inequality in
\eqref{eq-stopping-control},   a.s. on $\Omega'$,
\begin{equation*}
\begin{aligned}
 E_{\sF_\tau}\int_\tau^\eta Q_\phi(s,X_s,\theta_s^\alpha)\,ds
 &=E_{\sF_\tau}\left[
       \phi(\eta,X_\eta)-\phi(\tau,\xi)
       +\int_\tau^\eta f(s,X_s,\theta_s^\alpha)\,ds\right]\\
 &\leq E_{\sF_\tau}\left[
       V(\eta,X_\eta)+\int_\tau^\eta f(s,X_s,\theta_s^\alpha)\,ds
       \right]-V(\tau,\xi)\\
 &\leq J(\tau,\xi;\theta^\alpha)-V(\tau,\xi)\leq\alpha.
\end{aligned}
\end{equation*}
Taking expectations on $\Omega'$ and using the strict residual
inequality, we conclude that
\[
 \eps m
 \leq\eps E[1_{\Omega'}(\eta-\tau)]
 \leq E\left[1_{\Omega'}\int_\tau^\eta
                   Q_\phi(s,X_s,\theta_s^\alpha)\,ds\right]
 \leq\alpha\bP(\Omega')<\eps m,
\]
a contradiction. Thus,   a.s. on $\Omega_\tau$,
\[
 \operatorname*{ess\,limsup}_{(s,x)\to(\tau^+,\xi)}R_\phi(s,x)
 \geq0.
\]
Hence $V$ is a viscosity supersolution of \eqref{SHJB}, which
completes the proof.
\end{proof}
}


\section{Uniqueness of the viscosity solution}

{
This section is devoted to proving maximality and uniqueness under $(\cA1)$ in the
polynomial-growth class. We first construct auxiliary upper and
lower comparison fields for the bounded approximating problems of
Lemma~\ref{lem-poly-approximation}. A comparison argument on spatial
balls and an adapted polynomial barrier then give the two results
for the original data.
Recall the residual $R_\phi$ (from Definition~\ref{defn-viscosity})
\begin{equation}\label{eq:max-residual}
 R_\phi(t,x)=\sup_{v\in U}
 \{-\mathscr L^v\phi(t,x)-f(t,x,v)\}.
\end{equation}

\subsection{Auxiliary upper comparison fields}
First, we construct a family of regular upper comparison fields for the
bounded approximating problems of Lemma~\ref{lem-poly-approximation}.
\begin{lem}
\label{lem-upper-comparison-fields}
For data satisfying $(\cA1_{\mathrm b})$, fix a deterministic
$(t_0,x_0)\in[0,T)\times\bR^d$ and $\theta\in\cU$.
 There are deterministic constants $B_*$ and $C_{t_0}>0$,
independent of $0<\delta\le1$, and fields
$\Phi_\delta\in\mathscr C^2_\sF$ such that
\begin{align*}
 \mathscr L^{\theta_s}\Phi_\delta(s,x)+f(s,x,\theta_s)&\le0,
       &&\forall\, x\in\bR^d,\quad d\bP\otimes ds\text{-a.e.},\\
 \Phi_\delta(T,x)&\ge G(x),
       &&\forall\,x\in\bR^d,\quad\text{a.s.},\\
 \Phi_\delta(s,x)&\ge-B_*,
       &&\forall\,(s,x)\in[0,T]\times\bR^d,\quad\text{a.s.},\\
 \Phi_\delta(t_0,x_0)&=J(t_0,x_0;\theta)
                     + C_{t_0}\frac{\delta}{1+\delta}.
\end{align*}
\end{lem}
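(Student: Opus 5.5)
The plan is to build $\Phi_\delta$ from the scalar cost process along the single controlled trajectory started at $(t_0,x_0)$. To that process I add a smooth, spatially Lipschitz-type penalty centred at the moving state. The idea is that the linear operator $\mathscr L^{\theta_s}$ applied to such a penalty produces only increments $\beta(s,x)-\beta(s,X_s)$ and $\sigma(s,x)-\sigma(s,X_s)$. These are controlled by the global Lipschitz constant $L$ from $(\cA1_{\mathrm b})$, so degeneracy of $\sigma$ plays no role.

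\textbf{Construction on $[t_0,T]$.} Put $X=X^{t_0,x_0;\theta}$ and $Y_s=J(s,X_s;\theta)$. By the flow property and Lemma~\ref{lem-J} (used as in the proof of Proposition~\ref{prop-value-func}(ii)), $Y_s+\int_{t_0}^s f(r,X_r,\theta_r)\,dr$ is a bounded martingale. Martingale representation then gives $dY_s=-f(s,X_s,\theta_s)\,ds+Z_s\,dW_s$ with $Z\in\cL^2$. Set $\delta'=\delta/(1+\delta)$, $g_\delta(z)=(\delta'^2+|z|^2)^{1/2}$, and
\[
\Phi_\delta(s,x)=Y_s+K e^{\kappa(T-s)}g_\delta(x-X_s),\qquad s\ge t_0,
\]
with deterministic $K,\kappa$ to be fixed. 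Itô's formula for $s\mapsto g_\delta(x-X_s)$ at fixed $x$ identifies
\[
\mathfrak d_\omega\Phi_\delta=Z_s-Ke^{\kappa(T-s)}Dg_\delta\,\sigma(s,X_s,\theta_s).
\]
Likewise $\mathfrak d_s\Phi_\delta$ consists of $-f(s,X_s,\theta_s)$, the $-\kappa$ term, $-Dg_\delta\beta(s,X_s,\theta_s)$ and $\tfrac12D^2g_\delta:\sigma\sigma'(s,X_s,\theta_s)$. Since $g_\delta$ has bounded derivatives of all orders (depending on $\delta$), the field $\Phi_\delta$ lies in $\mathscr C^2_\sF$. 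The key computation is that the cross term $\operatorname{tr}(\sigma(x)D\mathfrak d_\omega\Phi_\delta)$ completes a square, which gives
\[
\mathscr L^{\theta_s}\Phi_\delta+f(s,x,\theta_s)=f(x)-f(X_s)+Ke^{\kappa(T-s)}\Big[-\kappa g_\delta+Dg_\delta(\beta(x)-\beta(X_s))+\tfrac12D^2g_\delta:(\sigma(x)-\sigma(X_s))(\sigma(x)-\sigma(X_s))'\Big].
\]
Here all coefficients are evaluated at $(s,\cdot,\theta_s)$. Write $z=x-X_s$. We have $|Dg_\delta|\le1$ and $|D^2g_\delta|\le C/g_\delta$. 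Together with $|z|\le g_\delta$, this bounds the bracket by $(-\kappa+L+CL^2)g_\delta$. The cost term satisfies $|f(x)-f(X_s)|\le L|z|\le Lg_\delta$. So the inequality holds once $K\ge L$ and $\kappa\ge 2L+CL^2$. The terminal condition follows from $\Phi_\delta(T,x)\ge G(X_T)+L|x-X_T|\ge G(x)$, using $Y_T=G(X_T)$. The lower bound $\Phi_\delta\ge-L(T+1)$ follows from Proposition~\ref{prop-value-func}(v). At the base point, $\Phi_\delta(t_0,x_0)=J(t_0,x_0;\theta)+Ke^{\kappa(T-t_0)}\delta/(1+\delta)$, so $C_{t_0}=Ke^{\kappa(T-t_0)}$, which is independent of $\delta$.

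\textbf{Extension to $[0,t_0)$.} For $s<t_0$ I would set $\Phi_\delta(s,x)=E_{\sF_s}\Phi_\delta(t_0,x)+c_\delta(t_0-s)$. The first term is a spatially smooth martingale field, obtained by representing the bounded variable $Y_{t_0}$ and noting that the penalty is deterministic at time $t_0$. Its $x$-derivatives are bounded by $\delta$-dependent constants, and $\mathfrak d_s$ of it vanishes. So $\mathscr L^{\theta_s}$ of this term plus $f$ is bounded by a deterministic $c_\delta$, up to the $D\mathfrak d_\omega$ term. That term has zero spatial gradient because $Y_{t_0}$ does not depend on $x$. The added drift $-c_\delta$ restores the inequality. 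The correction vanishes at $t_0$ and is nonnegative, so continuity at $t_0$, the lower bound $B_*$ and the value at $(t_0,x_0)$ are all unaffected.

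\textbf{Main obstacle.} I expect the hardest step to be justifying membership in $\mathscr C^2_\sF$ and the identification of $(\mathfrak d_t\Phi_\delta,\mathfrak d_\omega\Phi_\delta)$ simultaneously for all $x$. This is needed so that the square-completion identity holds $d\bP\otimes ds$-a.e. for every $x$. I would handle it by applying Itô's formula on a countable dense set of $x$. The drift and noise coefficients are jointly continuous in $x$, with local $\cL^2$ bounds from the boundedness of $\beta,\sigma$ and of the derivatives of $g_\delta$. Continuity then extends the decomposition to all $x$, and uniqueness of the semimartingale decomposition fixes the coefficients.
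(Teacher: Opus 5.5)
Your proposal is correct and follows essentially the paper's construction: the conditional cost along the controlled path, plus an exponentially weighted penalty centred at $X_s$, the square completion of the $\sigma$-terms, and a deterministic linear-in-time drift correction on $[0,t_0)$. The one difference is that you use the linearly growing penalty $\sqrt{\delta'^2+|z|^2}$ instead of the paper's bounded $p_\delta=r_\delta/(1+r_\delta)$. This is harmless, because $\mathscr C^2_\sF$ only requires local norms and only the lower bound $\Phi_\delta\ge -B_*$ is used later; note also that you need $K\ge\max\{L,1\}$, not just $K\ge L$, so that the term $|f(x)-f(X_s)|\le L g_\delta$ is absorbed when $s$ is near $T$.
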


\begin{proof}
      \textbf{Step 1.} We first construct a regular upper field on the time interval $[t_0,T]$.
Enlarge the constant in $(\cA1_{\mathrm b})$ to a deterministic $L_*\ge1$
which bounds $|f|$, $|G|$, $|\beta|$, ${|\sigma|}$,
and all their spatial Lipschitz constants. 
 Fix deterministic $(t_0,x_0)\in[0,T)\times\bR^d$ and
$\theta\in\cU$. Let $X$ be the controlled state starting from
$X_{t_0}=x_0$, extended by $X_s=x_0$ for $s<t_0$.
Let $(Y,Z)$ solve the scalar BSDE on $[0,T]$
\begin{equation}\label{eq:max-scalar-bsde}
 Y_s=G(X_T)+\int_s^T\mathbf1_{[t_0,T]}(r)
 f(r,X_r,\theta_r)\,dr-\int_s^T Z_r\,dW_r.
\end{equation}
Martingale representation gives its solution, with
\begin{equation}\label{eq:max-Y-bound}
 |Y_s|\le B_*:=L_*(T+1),\qquad Z\in\cL^2(\bR^m),
 \qquad Y_{t_0}=J(t_0,x_0;\theta).
\end{equation}

For $0<\delta\le1$, put
\begin{equation}\label{eq:max-penalty}
 r_\delta(z)=\sqrt{\delta^2+|z|^2},\qquad
 p_\delta(z)=\frac{r_\delta(z)}{1+r_\delta(z)}.
\end{equation}
For each fixed $\delta$, $p_\delta$ is smooth with bounded derivatives.
Writing $r=r_\delta(z)$, direct differentiation gives
\begin{align*}
 Dp_\delta(z)&=\frac{z}{r(1+r)^2},\\
 D^2p_\delta(z)&=
 \frac1{(1+r)^2}\left(\frac{I}{r}-\frac{zz'}{r^3}\right)
 -\frac{2zz'}{r^2(1+r)^3}.
\end{align*}
Consequently,
\begin{equation}\label{eq:max-penalty-bounds}
 |z||Dp_\delta(z)|\le p_\delta(z),\qquad
 |z|^2\|D^2p_\delta(z)\|_{\mathrm{op}}\le3p_\delta(z).
\end{equation}
Also $\min\{|z|,2\}\le3p_\delta(z)$, so boundedness and
Lipschitz continuity give for all $x,y\in\bR^d$,
\begin{align}\label{eq:max-data-bounds}
 |f(s,x,\theta_s)-f(s,y,\theta_s)|&\le4L_*p_\delta(x-y),\notag\\
 |G(x)-G(y)|&\le4L_*p_\delta(x-y).
\end{align}

Choose
\begin{equation}\label{eq:max-constants}
 M\ge\max\{4L_*,B_*+1\},\qquad
 \lambda=1+L_*+\tfrac32L_*^2,\qquad
 c(s)=Me^{\lambda(T-s)}.
\end{equation}
To obtain a regular upper field on the entire time interval, choose
the finite deterministic constant
\begin{equation}\label{eq:max-extension-constant}
 A_\delta=L_*+\|c\|_\infty
 \left(L_*\|Dp_\delta\|_\infty+
       \tfrac12 L_*^2\|D^2p_\delta\|_{\mathrm{op},\infty}\right)
\end{equation}
and define
\begin{equation}\label{eq:max-upper-field}
 \Phi_\delta(s,x)=Y_s+c(s)p_\delta(x-X_s)
                       +A_\delta(t_0-s)^+,
\end{equation}
where the last term is zero on $[t_0,T]$ and
 $\|D^2p_\delta\|_{\mathrm{op},\infty}$ denotes
  the supremum of the operator norm of $D^2p_\delta$.

\textbf{Step 2.} We justify the construction.
First, we verify $\Phi_\delta\in\mathscr C^2_\sF$.
For $s>t_0$, abbreviate
$\beta_X=\beta(s,X_s,\theta_s)$,
$\sigma_X=\sigma(s,X_s,\theta_s)$, and $z=x-X_s$.
Writing $\sigma_X^k$ for the $k$th column of $\sigma_X$, It\^o's formula gives
\begin{align}\label{eq:max-field-derivatives}
 \mathfrak d_s\Phi_\delta
 &=-f(s,X_s,\theta_s)+c'p_\delta(z)
   -cDp_\delta(z)\cdot\beta_X
   +\tfrac c2\sigma_X\sigma_X':D^2p_\delta(z),\notag\\
 \mathfrak d_\omega^k\Phi_\delta
 &=Z_s^k-cDp_\delta(z)\cdot\sigma_X^k,\notag\\
 D(\mathfrak d_\omega^k\Phi_\delta)
 &=-cD^2p_\delta(z)\sigma_X^k.
\end{align}
For $s<t_0$, these quantities are respectively
$c'p_\delta(x-x_0)-A_\delta$, $Z_s^k$, and zero.
The value at the single deterministic time $t_0$ is immaterial.
Bounded coefficients and bounded derivatives of $p_\delta$ imply
\[
 \Phi_\delta\in\cS^2(C(\bR^d))\cap\cL^2(C^2(\bR^d)),\quad
 \mathfrak d_t\Phi_\delta\in\cL^2(C(\bR^d)),\quad
 \mathfrak d_\omega\Phi_\delta\in\cL^2(C^1(\bR^d)).
\]
We take predictable versions of the progressively
measurable processes $f(s,X_s,\theta_s)$, $\beta_X$, and $\sigma_X$,
obtained as $d\bP\otimes ds$-almost everywhere limits of continuous
adapted backward time averages. Substitution in
\eqref{eq:max-field-derivatives} gives predictable coefficient fields.
These replacements preserve the semimartingale decomposition and the
differential inequalities outside one $d\bP\otimes ds$-null set
common to all $x$.
It\^o's formula supplies the required semimartingale decomposition.

We next verify the differential inequality.
For $s>t_0$, put
$\beta_x=\beta(s,x,\theta_s)$ and
$\sigma_x=\sigma(s,x,\theta_s)$. Substitution of
\eqref{eq:max-field-derivatives} into $\mathscr L^{\theta_s}$ gives
\begin{align}\label{eq:max-cancellation}
 \mathscr L^{\theta_s}\Phi_\delta(s,x)+f(s,x,\theta_s)
 ={}&f(s,x,\theta_s)-f(s,X_s,\theta_s)+c'p_\delta(z)\notag\\
 &+cDp_\delta(z)\cdot(\beta_x-\beta_X)\notag\\
 &+\tfrac c2(\sigma_x-\sigma_X)(\sigma_x-\sigma_X) '
                      :D^2p_\delta(z),
\end{align}
where we have used the identity
\[
 \tfrac c2\sigma_X\sigma_X':D^2p_\delta
 +\tfrac c2\sigma_x\sigma_x':D^2p_\delta
 -c\sum_k(\sigma_x^k)'D^2p_\delta\,\sigma_X^k
 =\tfrac c2(\sigma_x-\sigma_X)(\sigma_x-\sigma_X)'
                    :D^2p_\delta.
\]
Using \eqref{eq:max-penalty-bounds}--\eqref{eq:max-constants} and the spatial
Lipschitz bounds in $(\cA1_{\mathrm b})$, we obtain
\begin{align}\label{eq:max-sign}
 \mathscr L^{\theta_s}\Phi_\delta+f(s,x,\theta_s)
 &\le\left[4L_*+c\left(-\lambda+L_*+\tfrac32L_*^2\right)\right]
                    p_\delta(z)\notag\\
 &=(4L_*-c)p_\delta(z)\le0.
\end{align}
For $s<t_0$, $c'\le0$ and \eqref{eq:max-extension-constant} give directly
\[
 \mathscr L^{\theta_s}\Phi_\delta+f(s,x,\theta_s)
 \le -A_\delta+L_*+
 \|c\|_\infty\left(L_*\|Dp_\delta\|_\infty+
 \tfrac12L_*^2\|D^2p_\delta\|_{\mathrm{op},\infty}\right)=0.
\]
It follows from \eqref{eq:max-residual} that $R_{\Phi_\delta}\ge0$.
These inequalities hold outside one $d\bP\otimes ds$-null set
simultaneously for all $x$, by spatial continuity and a countable
dense set of spatial points.

The terminal inequality follows from \eqref{eq:max-data-bounds}:
\[
 \Phi_\delta(T,x)=G(X_T)+Mp_\delta(x-X_T)\ge G(x).
\]
Both penalty terms in \eqref{eq:max-upper-field} are nonnegative,
so \eqref{eq:max-Y-bound} gives $\Phi_\delta\ge-B_*$.
At the starting point, $X_{t_0}=x_0$ and the extension term is zero;
hence
\[
 \Phi_\delta(t_0,x_0)=J(t_0,x_0;\theta)
                     +c(t_0)\frac{\delta}{1+\delta}.
\]
 Set $C_{t_0}=c(t_0)$. This constant is positive and independent of
$\delta$, as required.
\end{proof}

\subsection{Auxiliary lower comparison fields}
\label{sec:uniqueness-A1}

Throughout this subsection the data satisfy $(\cA1_{\mathrm b})$,
as do the bounded approximating problems in 
Lemma~\ref{lem-poly-approximation}. We enlarge a deterministic constant
$L$ to bound the coefficients norms and
all their spatial Lipschitz constants.

{For $q$ twice and $\psi=(\psi^k)_{k=1}^m$ once
continuously differentiable in the spatial variable, write}
\[
 \mathcal A^v(q,\psi)(t,x)
 =\tfrac12\sigma\sigma'(t,x,v):D^2q(t,x)
  +\sum_{k=1}^m\sigma^k(t,x,v)\cdot D\psi^k(t,x)
  +\beta(t,x,v)\cdot Dq(t,x).
\]
Here $\sigma^k$ is the $k$th column. The random field $q\in\mathscr C^2_\sF$ is called a regular lower
comparison field if it satisfies a.s. for all $x$ in the concerned domain and almost every $t$,
\[
 \mathfrak d_tq(t,x)+\mathcal A^v(q,\mathfrak d_\omega q)(t,x)
 +f(t,x,v)\ge0
 \quad\text{for every }v\in U.
\]

We construct regular lower comparison fields from spatially shaken value functions.
Krylov's method~\cite{krylov1999-policies} perturbs both time and
space in problems with deterministic coefficients.
Picarelli and Reisinger~\cite{picarelli-reisinger2020-error}
use spatial perturbations of Euler dynamics for error estimates in the approximation
of control problems.
For deterministic coefficients, Bouchard and Nutz
\cite{bouchard-nutz2016-target} construct smooth supersolutions for controlled forward--backward SDEs
using time-space shaking coefficients.
Related increasing-process and random-measure representations
of stochastic potentials appear in \cite{Qiu2014weak},
in a special setting with spatially independent controlled diffusion.

Here the shaken value is an adapted random field defined by
conditional expectations. Under $(\cA1_{\mathrm b})$,
Lemma~\ref{lem-uniq-A1-weak} gives its local weak semimartingale
decomposition with an $H^{-1}$ drift, an $L^2$ martingale
coefficient, and a positive random measure with a nonatomic
time marginal. 
After spatial convolution, we regularize the
resulting positive finite-variation term by adapted time
averages and correct the differential residual using a scalar
BSDE; see Lemma~\ref{lem-uniq-A1-regularize}.
This construction preserves adaptation and the one-sided
differential inequality required of lower comparison fields,
including when the finite-variation term has a time-singular
part.  It thereby supplies comparison fields for predictable,
possibly degenerate diffusion depending on both the state
and the control.
All Sobolev spaces used locally are on bounded balls; write
$H^j=H^{j,2}$ and let $H^{-1}$ be the dual of $H^{1,2}_0$. A distributional
identity means an identity after pairing with every compactly supported
smooth spatial test function.  

{
\begin{lem}[Parameterized Doob--Meyer decomposition]
\label{lem-uniq-A1-param-dm}
Let $O\subset\bR^d$ be a bounded Borel set with $|O|>0$.
Suppose $U_t(z)$ is a jointly measurable family of continuous
$(\sF_t)$-submartingales, bounded in absolute value by a common
deterministic constant $B$. There exist jointly measurable fields
$Z,K$ such that, for almost every $z\in O$, almost surely for all
$t\in[0,T]$,
\begin{equation}\label{eq:uniq-A1-param-dm}
 U_t(z)=U_0(z)+\int_0^t Z_s(z)\,dW_s+K_t(z),
\end{equation}
where $Z$ is $\sP\otimes\cB(O)$-measurable and square-integrable in
$(\omega,t,z)$, and $K_0=0$ with $K_\cdot(z)$ continuous and
increasing for almost every $(\omega,z)$. Moreover,
\begin{equation}\label{eq:uniq-A1-dm-bounds}
 E\int_O\left(K_T(z)^2+\int_0^T|Z_s(z)|^2\,ds\right)dz
 \le C_B|O|.
\end{equation}
where $C_B$ depends only on $B$.
When the construction
starts at a stopping time, the corresponding conditional estimate holds.
\end{lem}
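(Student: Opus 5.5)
For each fixed $z$ I would apply the classical Doob--Meyer decomposition together with martingale representation in the Brownian filtration. The real work is to make the resulting pieces jointly measurable in $(\omega,t,z)$, with $Z$ predictable, and to obtain the integrated bound \eqref{eq:uniq-A1-dm-bounds}. To achieve measurability I would write $K$ and $Z$ as explicit limits of jointly measurable approximations, rather than choosing them pointwise in $z$.

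\textbf{Step 1 (pointwise decomposition and bounds).} Fix $z$. Since $U(z)$ is a bounded continuous submartingale, it is of class (D). Doob--Meyer gives $U(z)=U_0(z)+M(z)+K(z)$, where $M(z)$ is a martingale and $K(z)$ is predictable and increasing with $K_0=0$. In the augmented Brownian filtration $M(z)$ is continuous, so $K(z)$ is continuous too. Martingale representation then gives $M_t(z)=\int_0^t Z_s(z)\,dW_s$. The standard energy estimate for the increasing part of a bounded submartingale, $EK_T^2\le C B^2$, follows from $E[(K_T-K_t)\mid\sF_t]\le 2B$ by the usual summation argument. Combining this with $E\int_0^T|Z|^2ds=E(U_T-U_0-K_T)^2\le C_B$ gives the pointwise bound. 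Integrating over $O$ yields \eqref{eq:uniq-A1-dm-bounds}, once measurability is settled.

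\textbf{Step 2 (joint measurability).} This is the main obstacle. For $K$, I would use the constructive Doob--Meyer approximation along dyadic times:
\[
 K^n_t(z)=\sum_{k:\,t_k< t}E_{\sF_{t_k}}\bigl[U_{t_{k+1}}(z)-U_{t_k}(z)\bigr].
\]
The conditional expectations of a jointly measurable bounded family can be chosen jointly measurable in $(\omega,z)$, for instance by taking regular versions through the Brownian functional representation, or by monotone class arguments from product functions. For continuous submartingales in a continuous filtration, $K^n_t(z)\to K_t(z)$ in $L^1$ uniformly in $t$; the standard proof for continuous $K$ gives this. Passing to an a.s.\ convergent subsequence, I would define $K$ as the limsup along a subsequence chosen by a diagonal argument in $L^1(\Omega\times O)$. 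This makes $K$ jointly measurable and equal to the pointwise $K(z)$ for a.e.\ $z$. The martingale part $M=U-U_0-K$ is then jointly measurable. For $Z$, set
\[
 Z^n_s(z)=\frac{d}{ds}\langle M(z),W\rangle_s \text{ approximated by } n\bigl(\langle M,W\rangle_s-\langle M,W\rangle_{(s-1/n)\vee0}\bigr),
\]
where $\langle M,W\rangle$ is obtained as a limit of jointly measurable discrete covariations. These approximations are continuous and adapted, hence predictable. Taking $\limsup$ gives a $\sP\otimes\cB(O)$-measurable $Z$ that agrees with the representation integrand $d\bP\otimes ds$-a.e.\ for a.e.\ $z$, by Lebesgue differentiation. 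An alternative for $Z$ is to use the isometry $L^2(\Omega\times O)\ni M_T\mapsto Z$, which is a bounded linear map from $L^2(O;L^2(\sF_T))$ onto $L^2(O;\cL^2)$. Representation in the Hilbert-space-valued setting then directly gives a jointly measurable $Z$, and Fubini identifies $\int Z\,dW$ with $M(z)$ for a.e.\ $z$.

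\textbf{Step 3 (stopping-time start).} If the family starts at a stopping time $\sigma$, I would apply the same argument to $U_{(\sigma\vee t)}$. The estimate in Step 1 then holds conditionally on $\sF_\sigma$, because $E_{\sF_t}(K_T-K_t)\le 2B$ remains valid for $t\ge\sigma$. This gives the conditional version of the bound.
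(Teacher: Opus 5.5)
Your proposal is correct, but it reaches joint measurability by a different route from the paper.

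\textbf{The paper's route.} The paper never decomposes $U(z)$ for individual $z$. It lifts the family to the product space $\Omega\times O$, with measure $\bP\otimes dz/|O|$ and the augmented filtration $\sF_t\otimes\cB(O)$. It then applies the classical Doob--Meyer theorem once, to the single bounded continuous submartingale $\widehat U$. Joint measurability of $K$ is then automatic, because the predictable $\sigma$-field of that product filtration is $\sP\otimes\cB(O)$. The paper bounds $\widehat E[M]_T$ via It\^o's formula for $(\widehat U+B)^2$. Finally it obtains $Z$ from the $L^2(O)$-valued Brownian martingale representation and stochastic Fubini, which is exactly your second alternative for $Z$.

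\textbf{Your route.} You decompose pointwise in $z$ and build a jointly measurable $K$ from the dyadic discrete compensators. This uses only the scalar Doob--Meyer and representation theorems, and it identifies the result with the pointwise decomposition immediately. Its extra cost is the convergence of the discrete compensators. That convergence does hold here, even in $L^2$ uniformly in $t$. Indeed $K^n_{t_j}-K_{t_j}=\sum_{k<j}(E_{\sF_{t_k}}\Delta_kK-\Delta_kK)$ is a discrete martingale whose terminal second moment is at most $E[\max_k\Delta_kK\cdot K_T]$. This tends to zero by continuity of $K$ and $EK_T^2<\infty$, and Doob's inequality plus the modulus of $K$ between grid points gives the uniform statement.

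\textbf{Two points to tighten.} First, extract your subsequence from the Cauchy property of $z\mapsto E\sup_t|K^n_t(z)-K^m_t(z)|$. This map is measurable and dominated by $EK^n_T(z)+EK^m_T(z)\le 4B$. Do not extract it from convergence to the pointwise $K(z)$, since the joint measurability of $K(z)$ is exactly what you are trying to establish. Second, for a start at a stopping time $\sigma$, the process $U_{\sigma\vee t}$ is adapted to $\sF_{\sigma\vee t}$, not to $\sF_t$. Either work in that filtration, or, as the paper does, extend by the martingale $E_{\sF_t}U_\sigma(z)$ before $\sigma$. Your conditional energy inequality then yields the stated conditional bounds.
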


\begin{proof}
{This follows from the classical Doob--Meyer theorem on
the product probability space
\[
 \widehat\Omega=\Omega\times O,\qquad
 \widehat{\bP}=\bP\otimes\frac{dz}{|O|},
\]
with the usual augmentation of $\sF_t\otimes\cB(O)$ as its
filtration $\widehat{\sF}_t$.
After choosing a product-adapted version,
$\widehat U_t(\omega,z)=U_t(\omega,z)$ is a bounded continuous
submartingale, hence of class D. The classical theorem gives
$\widehat U=\widehat U_0+M+K$, where $M_0=K_0=0$ and $K$ is
predictable and increasing; see~\cite[Chapter~III, Section~3]{pP05}.
Since $\widehat U$ is continuous, both $M$ and $K$ are continuous.
Put $C_0=B$. Localized It\^o's formula
for $(\widehat U+C_0)^2$, using $dK\ge0$ and Fatou's lemma, gives
\[
 \widehat E[M]_T\le4C_0^2.
\]
Thus $M$ is square-integrable on the product space, and
\[
 E\|M_T\|_{L^2(O)}^2
 =|O|\widehat E|M_T|^2\le4C_0^2|O|.
\]
Set $H=L^2(O)$, a separable Hilbert space. Product conditional
expectation identifies $M_t$ with $E[M_T\mid\sF_t]$ as an
$H$-valued martingale on the original Brownian filtration.
Applying the Hilbert-valued Brownian martingale representation
theorem~\cite[Theorem~4.7]{van-neerven-veraar-weis2007}, with state
space $L^2(O)$, noise space $\mathbb R^m$, and $p=2$, gives an
$L^2(O;\mathbb R^m)$-valued predictable process $Z$ satisfying
\[
 M_t=\int_0^t Z_s\,dW_s,\qquad
 E\int_0^T\|Z_s\|_{L^2(O;\mathbb R^m)}^2\,ds
 =E\|M_T\|_H^2.
\]
Choose a $\sP\otimes\cB(O)$-measurable representative of $Z$.
Stochastic Fubini gives \eqref{eq:uniq-A1-param-dm} for almost every
spatial parameter.}

{To make the conditional bounds explicit, fix stopping times
$\tau\le\gamma\le T$.}
For almost every $z$, apply It\^o's formula to $(U(z)+C_0)^2$ on
$[\tau,\gamma]$. Since
$0\le U+C_0\le2C_0$ and $dK\ge0$, taking conditional expectation   gives
\[
 E_{\sF_\tau}\int_\tau^\gamma|Z_s(z)|^2\,ds\le4C_0^2.
\]
Using
$K_\gamma(z)-K_\tau(z)=U_\gamma(z)-U_\tau(z)
-\int_\tau^\gamma Z_s(z)\,dW_s$ and conditional It\^o isometry yields
\[
 E_{\sF_\tau}|K_\gamma(z)-K_\tau(z)|^2\le16C_0^2.
\]
Integrating in $z$ proves
\eqref{eq:uniq-A1-dm-bounds} and its conditional version. In particular,
by Cauchy--Schwarz,
\[
 E_{\sF_\tau}\left[\left(\int_O
       (K_\gamma(z)-K_\tau(z))\,dz\right)^2\right]
 \le16C_0^2|O|^2.
\]
Thus $dK_t(z)\,dz$ is a positive random measure with square-integrable
total mass. The preceding conditional estimate holds for every pair
of stopping times $\tau\le\gamma$, with a deterministic bound
independent of $\tau$ and $\gamma$. Before spatial
smoothing, bounded variation is asserted in $L^1(O)$, not in $L^2(O)$.
\end{proof}
}

\begin{lem}[A weak semimartingale decomposition of a shaken value]
\label{lem-uniq-A1-weak}
For $0<\delta\le1$, let $\mathcal E_\delta$ be the set of all
$\overline B_\delta(0)$-valued progressively measurable processes.
For $(\theta,{\epsilon})\in\cU\times\mathcal E_\delta$, let
$X=X^{t,x;\theta,{\epsilon}}$ solve
\[
 \begin{cases}
 dX_s=\beta(s,X_s+{\epsilon}_s,\theta_s)\,ds
       +\sigma(s,X_s+{\epsilon}_s,\theta_s)\,dW_s,\qquad t\le s\le T,\\
 X_t=x.
 \end{cases}
\]
Define the shaken value function by
\[
 w^\delta(t,x)
 =\essinf_{(\theta,{\epsilon})\in\cU\times\mathcal E_\delta}
 E_{\sF_t}\!\left[\int_t^T f(s,X_s+{\epsilon}_s,\theta_s)\,ds+G(X_T)\right].
\]
{For every bounded ball $D$, there exist fields $a$ and
$\psi$ and a positive random measure $\mu$ on $[0,T]\times D$,
satisfying:
\begin{enumerate}
\item[(i)] There holds the following identity locally in $D$:
\begin{equation}\label{eq:uniq-A1-weak}
 dw^\delta=a\,dt+\psi\,dW_t+\mu(dt,dx),
\end{equation}
with $\psi\in \cL^2(L^2( D;\bR^m))$,
$a\in \cL^2( H^{-1}(D))$;
\item[(ii)] The time marginal of $\mu$ is nonatomic, and
\[
 E\big[\mu([0,T]\times D')^2\big]<\infty
 \quad\text{for every }D'\Subset D.
\]
\item[(iii)] For every
$\varphi\in C_c^\infty(D)$, the process
\[
 t\longmapsto\int_{(0,t]\times D}\varphi(x)\,\mu(ds,dx)
\]
is continuous and adapted.
\end{enumerate}}
\end{lem}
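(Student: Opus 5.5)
The key observation is that shaking makes the value \emph{translation-submartingale}. For fixed $(t,x)$ and each $y\in\overline B_\delta(0)$, using the shift $\epsilon_s=y$ on a short interval (as part of the admissible shaking) shows that the shaken value behaves like the value of a problem in which the controller may additionally move the state inside a $\delta$-ball. I would exploit a mollified quantity instead of pathwise regularity. Fix a ball $D$, a slightly larger ball $D_1$, and for $z\in D_1$ consider the process along a control frozen to be ``do nothing'': more precisely, I would use the DPP for the shaken problem (proved exactly as Theorem~\ref{thm-DPP} and Proposition~\ref{prop-value-func}, since the pair $(\theta,\epsilon)$ is just a control with values in $U\times\overline B_\delta(0)$, still a separable metric space satisfying $(\cA1_{\mathrm b})$ after composing coefficients with $x\mapsto x+e$). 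Hence $w^\delta$ is bounded, jointly continuous, spatially Lipschitz with deterministic constant, and for every fixed control $(\bar\theta,\bar\epsilon)$ and start $z$, the process
\[
 S_t(z)=w^\delta(t,X_t^{0,z;\bar\theta,\bar\epsilon})+\int_0^t f(s,X_s+\bar\epsilon_s,\bar\theta_s)\,ds
\]
is a bounded continuous submartingale, by \eqref{eq-vfunc-supM}.

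Step one: choose a reference dynamics that lets me recover $w^\delta(t,\cdot)$ itself, not a composed field. I would use the shaking to freeze the state: take $\bar\theta\equiv v_1$ and note that I cannot make $X$ constant, so instead I change variables. Put $Y_t(z)=X_t^{0,z;v_1,0}$; by the bounded Lipschitz assumptions and Kunita's theory the map $z\mapsto Y_t(z)$ is a stochastic flow of $C^1$-type homeomorphisms (Lipschitz suffices for a bi-Lipschitz flow with $L^p$ Jacobian bounds). Apply Lemma~\ref{lem-uniq-A1-param-dm} on $O=D_1$ to $U_t(z)=S_t(z)$ (bounded by $B=L(T+1)+LT$), getting $S_t(z)=S_0(z)+\int Z\,dW+K_t(z)$. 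Then, pairing $w^\delta(t,\cdot)$ with $\varphi\in C_c^\infty(D)$ and changing variables $x=Y_t(z)$, I get $\int w^\delta(t,x)\varphi(x)dx=\int S_t(z)\varphi(Y_t(z))J_t(z)dz-\int(\int_0^t f\,ds)\varphi(Y_t)J_tdz$, where $J_t=\det DY_t$. Itô's product rule on the right, with $\varphi(Y_t(z))J_t(z)$ a semimartingale (backward flow/Itô–Wentzell for the inverse flow), produces: a $dK$ term $\int\varphi(Y_t(z))J_t(z)\,dK_t(z)dz$, which defines $\mu$ as the image of the positive measure $J_t\,dK_t(z)\,dz$ under $(t,z)\mapsto(t,Y_t(z))$; a martingale term giving $\psi$ (contributions from $Z$ and from $w^\delta\cdot$(martingale part of $\varphi(Y)J$), the latter rewritten in terms of $w^\delta$ and $Dw^\delta\in L^\infty$ after integrating by parts back in $x$); and drift terms involving at most second derivatives of $\varphi$ times bounded quantities, plus the cross-variation $Z\cdot d\langle \varphi(Y)J\rangle$. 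Transforming back, the drift terms carrying $D^2\varphi$ against $w^\delta$ become $D\varphi$ against $Dw^\delta\sigma\sigma'$-type bounded fields, and the $Z$ cross term is $\int Z\,\sigma\cdot D\varphi$, so $a\in\cL^2(H^{-1}(D))$ using $Z\in L^2$ and Lipschitz bounds on $w^\delta$. The martingale coefficient $\psi$ likewise lies in $\cL^2(L^2(D))$, after Jacobian bounds and the estimate \eqref{eq:uniq-A1-dm-bounds}.

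Step two: properties of $\mu$. Positivity is immediate. Continuity and adaptedness in (iii) follow because $t\mapsto\int_0^t\int\varphi(Y_s)J_s\,dK_s(z)\,dz$ is a Lebesgue–Stieltjes integral of a continuous adapted integrand against continuous increasing $K(z)$, integrated in $z$ with Fubini; nonatomicity of the time marginal follows from continuity of each $K(z)$ and dominated convergence. The second-moment bound in (ii) uses $D'\Subset D$: points of $D'$ come from $z$ in a bounded set with high probability only, so I would instead choose $D_1$ random-free by localizing with stopping times when $Y$ leaves a larger ball, then use \eqref{eq:uniq-A1-dm-bounds}, bounded $J$ moments and Cauchy–Schwarz (square-integrable total mass from the conditional estimate in Lemma~\ref{lem-uniq-A1-param-dm}). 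The main obstacle I expect is exactly this localization: the flow $Y_t$ moves the parameter set, so covering $D$ for all $t$ requires either a bounded $O$ depending on time via stopping, or applying the parameterized Doob–Meyer on an increasing sequence of balls and patching consistently; I would first try the patching by uniqueness of the Doob–Meyer decomposition on overlaps. A secondary obstacle is justifying the change of variables and Itô–Wentzell for a merely Lipschitz (not $C^2$) flow; there I would first mollify the coefficients of the reference dynamics only (which is allowed since the reference control is ours) to make $Y$ a smooth flow.
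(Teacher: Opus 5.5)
There is a genuine gap in your Step one. The change of variables $x=Y_t(z)$ and the product rule for $\varphi(Y_t(z))J_t(z)$ require the reference flow to be a $C^1$ diffeomorphism whose Jacobian $J_t=\det DY_t$ is an It\^o process. Under $(\cA1_{\mathrm b})$ the coefficients are only Lipschitz in $x$, and merely predictable in $\omega$. That gives a stochastic flow of homeomorphisms, H\"older in $z$ of every order below one, but in general not a $C^1$ or bi-Lipschitz one; your parenthetical claim that Lipschitz suffices is not correct.

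Your proposed repair, mollifying the coefficients of the reference dynamics, is not available either. The only source of the sign of $K$, and hence of the positivity of $\mu$, is that $S(z)$ is a submartingale. This requires the reference path to be a state of the shaken problem for some $(\theta,\epsilon)\in\cU\times\mathcal E_\delta$. A shake selects a single evaluation point $X_s+\epsilon_s$, whereas mollification averages over many. The averaged pair $(\tilde\beta,\tilde\sigma)(s,y)$ is in general not of the form $(\beta,\sigma)(s,y+e,v)$ for any $(e,v)$, so along the mollified flow $S(z)$ is no longer a submartingale and $\mu$ loses its sign. Notice also that with $\epsilon\equiv0$ your argument never uses the shaking at all, which signals that the key ingredient is missing.

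The missing idea is to use the shake to freeze the \emph{coefficients} rather than the state. Fix a center $x_i$ and $v_0\in U$, let $A_s=\int_\tau^s\beta(r,x_i,v_0)\,dr+\int_\tau^s\sigma(r,x_i,v_0)\,dW_r$, and for $z$ near $x_i$ take $\epsilon_s=x_i-z-A_s$. Then $z+A_s$ is exactly the shaken state, with coefficients evaluated at $x_i$. Hence $U_s(z)=w^\delta(s,z+A_s)+\int_\tau^s f(r,x_i,v_0)\,dr$ is a bounded continuous submartingale. The change of variables $x=z+A_s$ is a translation with unit Jacobian and spatially constant coefficients, so only It\^o's formula for the smooth $\varphi(z+A_s)$ is needed. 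The running-cost terms cancel because $\int D^\alpha\varphi=0$ for $|\alpha|\ge1$, and no flow theory enters.

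The constraint $|\epsilon_s|\le\delta$ then dictates the localization, which is not the moving-parameter-set problem you anticipate. With $h=\delta/8$ one needs $z\in B_{4h}(x_i)$ and $|A_s|\le h$. So one proceeds as follows.
\begin{enumerate}
\item[(a)] Stop all translations of a finite cover of $\overline D$ when the first has moved distance $h$, and restart them at common stopping times.
\item[(b)] Show $\bP(\tau_j<T)\le e^{\lambda T}q^j$ with $q<1$, using a uniform conditional bound on short-time displacements.
\item[(c)] Glue the local identities with a partition of unity; the martingale coefficients agree on overlaps, since a continuous local martingale of finite variation vanishes.
\item[(d)] Sum the conditional bounds of Lemma~\ref{lem-uniq-A1-param-dm} over restarts to obtain (i)--(ii).
\end{enumerate}
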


\begin{proof}
The enlarged control problem still satisfies $(\cA1_{\mathrm b})$. Its
dynamic programming principle and Proposition~\ref{prop-value-func} give a
jointly continuous value $w=w^\delta$ satisfying
\begin{equation}\label{eq:uniq-A1-w-bounds}
 |w|\le B:=L(T+1),\qquad
 |w(t,x)-w(t,y)|\le C|x-y|,
\end{equation}
with deterministic $C$ independent of $\delta$.

{\textbf{Step 1.}}
We now obtain a local decomposition by translations. Put $h=\delta/8$,
fix a center $x_i$ and $v_0\in U$, and start at a stopping time $\tau$.
Define, for $s\ge\tau$,
\[
 A_s=\int_\tau^s\beta(r,x_i,v_0)\,dr
       +\int_\tau^s\sigma(r,x_i,v_0)\,dW_r,
\]
stopped when $|A_s|=h$, or at $T$. For $z\in B_{4h}(x_i)$, the
state $z+A_s$ can use the adapted shaking control
${\epsilon}_s=x_i-z-A_s$. Before stopping, $|{\epsilon}_s|\le5h<\delta$ and
$(z+A_s)+{\epsilon}_s=x_i$.
{More precisely, define
\[
 \gamma:=\inf\{s\in[\tau,T]:|A_s|\ge h\}\wedge T,
 \qquad \inf\varnothing:=\infty.
\]
Since $A$ is continuous and adapted, $\gamma$ is a stopping time.
Extend}
${\epsilon}$ by zero outside $[\tau,\gamma]$. Then ${\epsilon}\in\mathcal E_\delta$,
and the corresponding controlled state agrees with $z+A$ on
$[\tau,\gamma]$.
Therefore the dynamic programming principle makes
\[
 U_s(z)=w(s,z+A_s)+\int_\tau^s f(r,x_i,v_0)\,dr
\]
a uniformly bounded continuous submartingale on this stopped interval.
{Apply Lemma~\ref{lem-uniq-A1-param-dm}, in particular
\eqref{eq:uniq-A1-param-dm}--\eqref{eq:uniq-A1-dm-bounds}.} Stochastic intervals
cause no measurability difficulty, since before $\tau$ one may extend $U$ by the
martingale $E[U_\tau\mid\sF_s]$, and stop at the right endpoint.

{To recover the weak semimartingale decomposition of $w$,
we apply It\^o's product rule to $U_s(z)\varphi(z+A_s)$, followed by
stochastic Fubini and spatial integration by parts.}
For a smooth test $\varphi$ supported in $B_{3h}(x_i)$, pair $U_s(z)$
with $\varphi(z+A_s)$ and integrate over $z$. Its support stays in
$B_{4h}(x_i)$. The product rule and stochastic Fubini give the
martingale coefficient
\[
 \psi(s,x)=Z_s(x-A_s)-Dw(s,x)\sigma(s,x_i,v_0)
\]
and the distributional identity on $B_{3h}(x_i)$
\begin{align}\label{eq:uniq-A1-translated-weak}
 dw={}&-\left[\tfrac12 a_i:D^2w+
              \sum_k\sigma_i^k\cdot D\psi^k+
              \beta_i\cdot Dw+f_i\right]\,{ds}
       +\psi\,dW_s+\mu_i,\\
 &a_i=\sigma_i\sigma_i',\quad
 (\beta_i,\sigma_i,f_i)(s)=(\beta,\sigma,f)(s,x_i,v_0),\notag\\
 &\int\varphi(x)\,\mu_i(ds,dx)
       =\int\varphi(z+A_s)\,dK_s(z)\,dz.\notag
\end{align}
{We give the calculation on $[\tau,\gamma]$ explicitly.
Put $O=B_{4h}(x_i)$, $w_s=w(s,\cdot)$,
\[
 F_s=\int_\tau^s f_i(r)\,dr,\qquad
 \widetilde Z_s(x)=Z_s(x-A_s),\qquad
 \varphi_s^A(z)=\varphi(z+A_s),
\]
and write $\langle\cdot,\cdot\rangle$ for spatial pairing. We suppress
the time argument of the frozen coefficients in the formulas below.
Since $|A_s|\le h$ and $\operatorname{supp}\varphi\Subset B_{3h}(x_i)$,
the support of $\varphi_s^A$ is compactly contained in $O$.
The change of variables $x=z+A_s$ therefore gives
\[
 I_s:=\int_O U_s(z)\varphi_s^A(z)\,dz
     =\langle w_s,\varphi\rangle+F_s\int\varphi(x)\,dx.
\]
For almost every $z$, the preceding Doob--Meyer decomposition and
It\^o's formula give
\begin{align*}
 dU_s(z)&=\sum_k Z_s^k(z)\,dW_s^k+dK_s(z),\\
 d\varphi_s^A(z)
 &=\left(\beta_i\cdot D\varphi+
                  \tfrac12a_i:D^2\varphi\right)(z+A_s)\,ds
   +\sum_k(\sigma_i^k\cdot D\varphi)(z+A_s)\,dW_s^k.
\end{align*}
In particular, their quadratic covariation is
\[
 d[U(z),\varphi^A(z)]_s
 =\sum_k Z_s^k(z)(\sigma_i^k\cdot D\varphi)(z+A_s)\,ds.
\]
Thus the full product formula reads
\begin{align*}
 d\bigl(U_s(z)\varphi_s^A(z)\bigr)
 ={}&\varphi_s^A(z)\,dK_s(z)\\
 &+\left[U_s(z)\left(\beta_i\cdot D\varphi+
                      \tfrac12a_i:D^2\varphi\right)(z+A_s)
       +\sum_k Z_s^k(z)(\sigma_i^k\cdot D\varphi)(z+A_s)\right]ds\\
 &+\sum_k\left[\varphi_s^A(z)Z_s^k(z)
       +U_s(z)(\sigma_i^k\cdot D\varphi)(z+A_s)\right]dW_s^k.
\end{align*}
For every multi-index $\alpha$ with $1\le|\alpha|\le2$, the terms
containing $F_s$ disappear after spatial integration, since
\begin{align*}
 \int_O U_s(z)D^\alpha\varphi(z+A_s)\,dz
 &=\langle w_s,D^\alpha\varphi\rangle
      +F_s\int D^\alpha\varphi(x)\,dx\\*
 &=\langle w_s,D^\alpha\varphi\rangle.
\end{align*}
Define the continuous finite-variation process
\[
 C_t^\varphi:=\int_O\int_\tau^t
          \varphi(z+A_s)\,dK_s(z)\,dz,
 \qquad \tau\le t\le\gamma.
\]
Integrating the product formula over $O$, using stochastic Fubini,
and subtracting $dF_s\int\varphi=f_i\int\varphi\,ds$ now gives
\begin{align*}
 d\langle w_s,\varphi\rangle
 ={}&\left[
       \left\langle w_s,\beta_i\cdot D\varphi+
                         \tfrac12a_i:D^2\varphi\right\rangle
       +\sum_k\left\langle\widetilde Z_s^k,
                         \sigma_i^k\cdot D\varphi\right\rangle
       -f_i\int\varphi\right]ds\\
 &+\sum_k\left[
       \langle\widetilde Z_s^k,\varphi\rangle
       +\langle w_s,\sigma_i^k\cdot D\varphi\rangle
       \right]dW_s^k+dC_s^\varphi.
\end{align*}
Since $\sigma_i^k$ is spatially constant, spatial integration by parts
identifies the $k$th martingale coefficient as
\[
 \langle\widetilde Z_s^k,\varphi\rangle
       +\langle w_s,\sigma_i^k\cdot D\varphi\rangle
 =\langle\widetilde Z_s^k-\sigma_i^k\cdot Dw_s,\varphi\rangle
 =\langle\psi_s^k,\varphi\rangle.
\]
For the drift, the identity $a_i=\sigma_i\sigma_i'$ gives
\begin{align*}
 \sum_k\langle\sigma_i^k\cdot Dw_s,\sigma_i^k\cdot D\varphi\rangle
 &=\sum_{\ell,j=1}^d a_i^{\ell j}
                   \langle D_\ell w_s,D_j\varphi\rangle\\
 &=-\sum_{\ell,j=1}^d a_i^{\ell j}
                   \langle w_s,D_{\ell j}\varphi\rangle
  =-\langle w_s,a_i:D^2\varphi\rangle.
\end{align*}
Writing $b_s(\varphi)$ for the $ds$-coefficient above and substituting
$\widetilde Z_s^k=\psi_s^k+\sigma_i^k\cdot Dw_s$, we obtain
\begin{align*}
 b_s(\varphi)
 &=\left\langle w_s,\beta_i\cdot D\varphi+
                         \tfrac12a_i:D^2\varphi\right\rangle
   +\sum_k\langle\psi_s^k,\sigma_i^k\cdot D\varphi\rangle
   -\langle w_s,a_i:D^2\varphi\rangle-f_i\int\varphi\\
 &=\left\langle w_s,\beta_i\cdot D\varphi-
                         \tfrac12a_i:D^2\varphi\right\rangle
   +\sum_k\langle\psi_s^k,\sigma_i^k\cdot D\varphi\rangle
   -f_i\int\varphi\\
 &=-\left\langle
       \tfrac12a_i:D^2w_s+\sum_k\sigma_i^k\cdot D\psi_s^k
          +\beta_i\cdot Dw_s+f_i,\varphi\right\rangle.
\end{align*}
This proves the drift and martingale terms in
\eqref{eq:uniq-A1-translated-weak}. All spatial derivatives in the last
identity are distributional: $Dw$ is bounded, $\widetilde Z$ and
$\psi$ are locally square-integrable, and the frozen coefficients
are spatially constant.

For completeness, the integrability needed for the preceding Fubini
steps can be checked directly. The stochastic integrands
\[
 \mathcal H_s^k(z)
 :=\varphi_s^A(z)Z_s^k(z)
     +U_s(z)(\sigma_i^k\cdot D\varphi)(z+A_s)
\]
satisfy, by boundedness of $U$ and $\sigma_i$,
\[
 E\int_\tau^\gamma\int_O\sum_k|\mathcal H_s^k(z)|^2\,dz\,ds
 \le C_\varphi\left(
       E\int_\tau^\gamma\int_O|Z_s(z)|^2\,dz\,ds+|O|T\right)<\infty.
\]
Together with $|O|<\infty$, this justifies stochastic Fubini.
The drift terms containing $Z$ are integrable by Cauchy--Schwarz,
and all other drift factors are bounded. For the finite-variation
term, positivity of $dK_s(z)$ gives
\[
 \int_O\int_\tau^\gamma
       |\varphi(z+A_s)|\,dK_s(z)\,dz
 \le\|\varphi\|_\infty
       \int_O\bigl(K_\gamma(z)-K_\tau(z)\bigr)\,dz<\infty,
\]
where the right-hand side has finite second moment by the conditional
bounds above. This also proves continuity of $C^\varphi$ by dominated
convergence and continuity of $K_\cdot(z)$ for almost every $z$.

More explicitly, the transported measure is defined, for Borel
$B\subset B_{3h}(x_i)$ and $\tau\le t\le\gamma$, by
\[
 \mu_i((\tau,t]\times B)
 :=\int_O\int_\tau^t\mathbf1_B(z+A_s)\,dK_s(z)\,dz\ge0.
\]
Consequently,
\[
 \int_{(\tau,t]\times B_{3h}(x_i)}\varphi(x)\,\mu_i(ds,dx)
 =C_t^\varphi
 =\int_O\int_\tau^t\varphi(z+A_s)\,dK_s(z)\,dz.
\]
Thus the cumulative measure uses the translations $A_s$ at the
integration times $s$, and the measure term in
\eqref{eq:uniq-A1-translated-weak} is positive.}

{\textbf{Step 2.}}
To obtain the stated norms on $D$, choose finitely many centers $x_1,\ldots,x_N$
whose $B_h(x_i)$ cover a neighborhood of $\overline D$, with a
nonnegative smooth partition of unity on that neighborhood supported
in the $B_{2h}(x_i)$. Run the translations at all
centers from the same stopping time and restart all of them when
the first translation moves distance $h$. More precisely, for a
stopping time $\tau\le T$, put
\[
 A_s^{\tau,i}=\int_\tau^s\beta(r,x_i,v_0)\,dr
       +\int_\tau^s\sigma(r,x_i,v_0)\,dW_r,
 \qquad \tau\le s\le T.
\]
The coefficient bounds and conditional Doob inequality give, for $b>0$,
\[
 E_{\sF_\tau}\!\left[
   \sup_{\tau\le s\le(\tau+b)\wedge T}|A_s^{\tau,i}|^2\right]
 \le 2L^2b^2+8L^2b.
\]
Indeed, the drift contributes at most $2L^2b^2$, while conditional
Doob inequality and It\^o isometry bound the martingale contribution
by $8L^2b$. Conditional Markov inequality and a union bound therefore give
\[
 \bP\!\left(
   \max_{1\le i\le N}\sup_{\tau\le s\le(\tau+b)\wedge T}
       |A_s^{\tau,i}|\ge h\,\middle|\,\sF_\tau\right)
 \le \frac{NL^2}{h^2}(2b^2+8b).
\]
Choose a deterministic $b>0$ so that the right-hand side is at most
$1/2$. This choice is uniform over all starting stopping times.
Set $\tau_0=0$ and, on $\{\tau_j<T\}$, define
\[
 \tau_{j+1}
 =\inf\left\{s\in(\tau_j,T]:
       \max_{1\le i\le N}|A_s^{\tau_j,i}|\ge h\right\}\wedge T,
\]
where $\inf\varnothing=\infty$, and set $\tau_{j+1}=T$ on
$\{\tau_j=T\}$. Writing $\Delta_j=\tau_{j+1}-\tau_j$, we have
\[
 \bP\!\left(\Delta_j\le b,\ \tau_{j+1}<T
       \,\middle|\,\sF_{\tau_j}\right)
 \le \tfrac12\mathbf1_{\{\tau_j<T\}}.
\]
For $\lambda>0$, spliting according to whether
$\Delta_j\le b$ or $\Delta_j>b$ yields
\begin{align*}
 &E_{\sF_{\tau_j}}\!\left[
       e^{-\lambda\Delta_j}\mathbf1_{\{\tau_{j+1}<T\}}\right]\\*
 &\quad\le e^{-\lambda b}\mathbf1_{\{\tau_j<T\}}
   +(1-e^{-\lambda b})
       \bP\!\left(\Delta_j\le b,\ \tau_{j+1}<T
                      \,\middle|\,\sF_{\tau_j}\right)\\*
 &\quad\le q\mathbf1_{\{\tau_j<T\}},
 \qquad q:=\tfrac12(1+e^{-\lambda b})<1.
\end{align*}
The indicator of $\{\tau_{j+1}<T\}$ excludes intervals ending only
because the terminal time is reached. No independence between the
centers or successive intervals is used. Multiplying by
$e^{-\lambda\tau_j}$, taking expectations, and iterating gives
\[
 E\!\left[e^{-\lambda\tau_j}\mathbf1_{\{\tau_j<T\}}\right]
 \le q^j.
\]
Since $e^{-\lambda\tau_j}\ge e^{-\lambda T}$ on $\{\tau_j<T\}$,
\begin{equation}\label{eq:uniq-A1-reset-tail}
 \bP(\tau_j<T)\le e^{\lambda T}q^j.
\end{equation}
In particular,
\[
 E\!\left[\sum_{j\ge1}\mathbf1_{\{\tau_j<T\}}\right]
 \le e^{\lambda T}\frac{q}{1-q}<\infty,
\]
so there are finitely many restarts a.s.

{On each interval, multiply the local identities
\eqref{eq:uniq-A1-translated-weak} by the spatial partition of unity and sum.
More precisely, denote the partition by $\eta_1,\ldots,\eta_N$, so that
\[
 \eta_i\in C_c^\infty(B_{2h}(x_i)),\qquad \eta_i\ge0,\qquad
 \sum_{i=1}^N\eta_i=1
 \quad\hbox{on a neighborhood of }\overline D.
\]
Put $I_j=(\tau_j,\tau_{j+1}]$, $O_i=B_{4h}(x_i)$, and
$A_s^{j,i}=A_s^{\tau_j,i}$. Denote the corresponding local
Doob--Meyer fields by $Z^{j,i},K^{j,i}$ and the local measure by
$\mu^{j,i}$. On $I_j\times B_{3h}(x_i)$, write
\[
 \psi_s^{j,i}(x)
 =Z_s^{j,i}(x-A_s^{j,i})-Dw_s(x)\sigma_i(s),\qquad
 dw=b^{j,i}\,ds+\psi^{j,i}\,dW_s+\mu^{j,i},
\]
where the local drift is the distribution
\[
 b_s^{j,i}
 =-\left[\tfrac12a_i:D^2w_s+
       \sum_k\sigma_i^k\cdot D\psi_s^{j,i,k}
       +\beta_i\cdot Dw_s+f_i\right].
\]
For $\varphi\in C_c^\infty(D)$, extended by zero outside $D$,
the function $\eta_i\varphi$ is an admissible local test. Since
$\sum_i\eta_i\varphi=\varphi$, summing the tested identities gives
\begin{align*}
 d\langle w_s,\varphi\rangle
 ={}&\sum_i\langle b_s^{j,i},\eta_i\varphi\rangle\,ds
   +\sum_k\left(\sum_i
       \langle\psi_s^{j,i,k},\eta_i\varphi\rangle\right)dW_s^k\\
 &+\sum_i\int\eta_i(x)\varphi(x)\,\mu^{j,i}(ds,dx).
\end{align*}
Thus, on $I_j$, define
\[
 b_s^j=\sum_i\eta_i b_s^{j,i},\qquad
 \psi_s^j=\sum_i\eta_i\psi_s^{j,i},\qquad
 \mu^j(ds,dx)=\sum_i\eta_i(x)\mu^{j,i}(ds,dx)
 \quad (x\in D).
\]
Products with distributions are understood through
$\langle\eta_i b_s^{j,i},\varphi\rangle
=\langle b_s^{j,i},\eta_i\varphi\rangle$.
The cutoffs are deterministic and independent of time, so their
multiplication introduces no additional It\^o terms.

The martingale coefficients agree on overlaps. Indeed, for a test
$\chi\in C_c^\infty(B_{3h}(x_i)\cap B_{3h}(x_\ell))$,
subtracting the two local identities shows that the continuous
local martingale with integrand
$\langle\psi_s^{j,i}-\psi_s^{j,\ell},\chi\rangle$ has finite
variation. Its quadratic variation therefore vanishes:
\[
 \int_{\tau_j}^{\tau_{j+1}}
   \left|\langle\psi_s^{j,i}-\psi_s^{j,\ell},\chi\rangle\right|^2\,ds
 =0\quad\hbox{a.s.}
\]
A countable dense set of tests then gives
$\psi^{j,i}=\psi^{j,\ell}$ a.e. on the overlap.

\textbf{Step 3.}
We verify the norms. Constants below are deterministic,
independent of $j$, and may depend on the fixed cover, the data,
and $\delta$. Since $|A_s^{j,i}|\le h$ on $I_j$,
$B_{2h}(x_i)-A_s^{j,i}\subset O_i$. Translation invariance of
Lebesgue measure and boundedness of $Dw$ and $\sigma_i$ give
\[
 \|\psi_s^{j,i}\|_{L^2(B_{2h}(x_i))}^2
 \le 2\|Z_s^{j,i}\|_{L^2(O_i)}^2+C.
\]
The conditional version of \eqref{eq:uniq-A1-dm-bounds} implies
\[
 E_{\sF_{\tau_j}}\int_{\tau_j}^{\tau_{j+1}}
       \sum_i\|\psi_s^{j,i}\|_{L^2(B_{2h}(x_i))}^2\,ds
 \le C\mathbf1_{\{\tau_j<T\}}.
\]
Because the weights are nonnegative and sum to one on $D$,
Jensen's inequality yields
\[
 \|\psi_s^j\|_{L^2(D)}^2
 \le\sum_i\int_D\eta_i(x)|\psi_s^{j,i}(x)|^2\,dx
 \le\sum_i\|\psi_s^{j,i}\|_{L^2(B_{2h}(x_i))}^2.
\]
Consequently,
\[
 E\int_{\tau_j}^{\tau_{j+1}}\|\psi_s^j\|_{L^2(D)}^2\,ds
 \le C\bP(\tau_j<T).
\]

To estimate the drift, integration by parts gives
\begin{align*}
 \langle b_s^j,\varphi\rangle
 =\sum_i\bigg[
 &\tfrac12\langle a_iDw_s,D(\eta_i\varphi)\rangle
 +\sum_k\langle\sigma_i^k\psi_s^{j,i,k},
                         D(\eta_i\varphi)\rangle\\
 &-\langle\beta_i\cdot Dw_s+f_i,\eta_i\varphi\rangle
 \bigg].
\end{align*}
In particular, the cutoff derivatives are included through
\[
 D(\eta_i\varphi)=\eta_iD\varphi+(D\eta_i)\varphi,\qquad
 \|\eta_i\varphi\|_{H^1(\mathbb R^d)}
 \le C_i\|\varphi\|_{H^1(D)}.
\]
Cauchy--Schwarz, the coefficient bounds, and boundedness of $Dw$
therefore imply
\[
 |\langle b_s^j,\varphi\rangle|
 \le C\left(1+\sum_i
             \|\psi_s^{j,i}\|_{L^2(B_{2h}(x_i))}\right)
          \|\varphi\|_{H^1(D)}.
\]
Taking the supremum over tests with $H^1$ norm at most one, and
then using the preceding conditional estimate, gives
\[
 \|b_s^j\|_{H^{-1}(D)}^2
 \le C\left(1+\sum_i
             \|\psi_s^{j,i}\|_{L^2(B_{2h}(x_i))}^2\right),
 \qquad
 E_{\sF_{\tau_j}}\int_{\tau_j}^{\tau_{j+1}}\|b_s^j\|_{H^{-1}(D)}^2\,ds
 \le C \mathbf1_{\{\tau_j<T\}}.
\] 

Concatenate the fields and measures by setting
\[
 a_s=\sum_{j\ge0}\mathbf1_{I_j}(s)b_s^j,\qquad
 \psi_s=\sum_{j\ge0}\mathbf1_{I_j}(s)\psi_s^j,\qquad
 \mu=\sum_{j\ge0}\mathbf1_{I_j}\mu^j,
\]
with zero measure at time zero. The intervals cover $(0,T]$
and there are finitely many nonempty intervals a.s. Summing the
paired local identities,
by \eqref{eq:uniq-A1-reset-tail}, we have
\begin{align*}
 E\int_0^T\bigl(\|a_s\|_{H^{-1}(D)}^2
                    +\|\psi_s\|_{L^2(D)}^2\bigr)\,ds
 &\le C\sum_{j\ge0}\bP(\tau_j<T)
 \le \frac{Ce^{\lambda T}}{1-q}<\infty.
\end{align*}
Spatial difference quotients of the continuous adapted field $w$
give a predictable version of $Dw$. The translated fields
$Z_s^{j,i}(\cdot-A_s^{j,i})$ are predictable, and spatial
multiplication and distributional differentiation are bounded
linear operations between the spaces used above. Moreover,
$\mathbf1_{(\tau_j,\tau_{j+1}]}$ is predictable. These facts give
predictable versions of $a$ and $\psi$.

Positivity follows from $\eta_i\ge0$. For a fixed $D'\Subset D$, put
\[
 M_j=\mu^j(I_j\times D'),\qquad
 Q_j=\sum_i\int_{O_i}
       \bigl(K_{\tau_{j+1}}^{j,i}(z)-K_{\tau_j}^{j,i}(z)\bigr)\,dz.
\]
The transported-measure formula gives
\begin{align*}
 M_j
 &=\sum_i\int_{O_i}\int_{\tau_j}^{\tau_{j+1}}
       \mathbf1_{D'}(z+A_s^{j,i})\eta_i(z+A_s^{j,i})
                                      \,dK_s^{j,i}(z)\,dz\\
 &\le Q_j,
\end{align*}
since $0\le\eta_i\le1$ on $D$. Using
$(\sum_i r_i)^2\le N\sum_i r_i^2$ and the conditional
compensator estimate above, with $C_0=B+LT$, yields
\[
 E_{\sF_{\tau_j}}M_j^2
 \le E_{\sF_{\tau_j}}Q_j^2
 \le 16C_0^2N\sum_i|O_i|^2\,\mathbf1_{\{\tau_j<T\}}.
\]
Thus $EM_j^2\le C\bP(\tau_j<T)$, and Minkowski's inequality
and \eqref{eq:uniq-A1-reset-tail} give
\[
 \|\mu([0,T]\times D')\|_{L^2(\Omega)}
 =\left\|\sum_{j\ge0}M_j\right\|_{L^2(\Omega)}
 \le\sum_{j\ge0}\|M_j\|_{L^2(\Omega)}
 \le \frac{Ce^{\lambda T/2}}{1-\sqrt q}<\infty.
\]

Finally, for $\varphi\in C_c^\infty(D)$, the concatenated identity is
\begin{align*}
 \int_{(0,t]\times D}\varphi(x)\,\mu(ds,dx)
 ={}&\langle w_t-w_0,\varphi\rangle
       -\int_0^t\langle a_s,\varphi\rangle\,ds\\
 &-\sum_k\int_0^t\langle\psi_s^k,\varphi\rangle\,dW_s^k.
\end{align*}
The first term is continuous and adapted. The drift integral is
absolutely continuous, since
\[
 \int_0^T|\langle a_s,\varphi\rangle|\,ds
 \le\sqrt T\,\|\varphi\|_{H^1(D)}
       \left(\int_0^T\|a_s\|_{H^{-1}(D)}^2\,ds\right)^{1/2}<\infty
 \quad\hbox{a.s.}
\]
The stochastic integral has a continuous version, because
\[
 E\int_0^T\sum_k|\langle\psi_s^k,\varphi\rangle|^2\,ds
 \le\|\varphi\|_{L^2(D)}^2
       E\int_0^T\|\psi_s\|_{L^2(D)}^2\,ds<\infty.
\]
Hence the cumulative pairing of $\mu$ is continuous and adapted.
For almost every sample point, continuity of
$K_\cdot^{j,i}(z)$ for almost every $z$ gives
$dK^{j,i}(z)(\{t\})=0$ for every $t$, including the random restart
times. Transport in space, multiplication by the cutoffs, and
concatenation therefore preserve the nonatomic time marginal;
the value at time zero was set to zero. This proves
\eqref{eq:uniq-A1-weak} with all the stated properties.}

\end{proof}

Choose a nonnegative $\rho\in C_c^\infty(B_{1/4}(0))$ with integral
one, put $\rho_\delta(y)=\delta^{-d}\rho(y/\delta)$, and define
\begin{equation}\label{eq:uniq-A1-q}
 q^\delta(t,x)=\int\rho_\delta(y)w^\delta(t,x-y)\,dy.
\end{equation}

\begin{lem}[The exact lower differential inequality]\label{lem-uniq-A1-q}
 Fix $x_*\in\bR^d$ and $R>0$, and set
\[
 D_0=B_R(x_*),\qquad
 D_1=B_{R+\delta/2}(x_*),\qquad
 D=\overline B_{R+\delta/4}(x_*).
\]
Then $q=q^\delta$ satisfies:
\begin{enumerate}
\item[{(i)}] {On
$[0,T]\times\overline{D_0}$, $q$ has a decomposition}
\begin{equation}\label{eq:uniq-A1-q-decomp}
 dq=a_0\,dt+\psi_0\,dW_t+dK,
\end{equation}
where $a_0,\psi_0$ are square-integrable with arbitrarily many bounded
continuous spatial derivatives  on a neighborhood of
$\overline{D_0}$;

\item[{(ii)}] {For every integer $j$, $K$ is
continuous with bounded
variation in $H^j(D_0)$},
$K_0=0$, and a.s.
\[
 K_t(x)-K_s(x)\ge0\
 {(0\le s\le t\le T,\ x\in\overline{D_0})},\qquad
 E[\operatorname{Var}_{H^j}(K)^2]<\infty;
 \footnote{{Here $H^j=H^j(D_0)$, and the}
pathwise total variation is
\[ 
 \operatorname{Var}_{H^j}(K)(\omega)
 :=\sup_{\pi}\sum_{\ell=1}^{N_\pi}
   \|K_{t_\ell}(\omega,\cdot)-K_{t_{\ell-1}}(\omega,\cdot)\|_{H^j(D_0)},
\]
where the supremum is over all finite partitions
$\pi=\{0=t_0<t_1<\cdots<t_{N_\pi}=T\}$ of $[0,T]$. Thus
\[
 E[\operatorname{Var}_{H^j}(K)^2]
 =\int_\Omega\bigl(\operatorname{Var}_{H^j}(K)(\omega)\bigr)^2
       \,\bP(d\omega)
\]
is the second moment of this total variation. By continuity of $K$,
the same supremum is obtained by restricting the interior partition
points to rational times; hence the total variation is measurable.}
\]

\item[{(iii)}] {For an integer $j>d/2+2$,
let $dK=k\,dt+dK^{\mathrm s}$, with $dK^{\mathrm s}\perp dt$, be
the time Lebesgue decomposition of the $H^j(D_0)$-valued measure
$dK$ {(see
\cite[Theorem~I.5.9 and Corollary~IV.1.4]{DiestelUhl1977})}.
Thus $k$ is the density of the absolutely continuous part
of $dK$. Then} $d\bP\otimes dt$-a.e., the density $k$ is spatially continuous 
on $\overline{D_0}$  and 
\begin{equation}\label{eq:uniq-A1-all-v}
 k(t,x)+a_0(t,x)+\mathcal A^v(q,\psi_0)(t,x)+f(t,x,v)\ge0
\quad \forall\,(x,v)\in\overline{D_0}\times U.
\end{equation}

\end{enumerate}
\end{lem}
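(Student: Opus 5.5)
The plan is to obtain everything by mollifying the weak decomposition of Lemma~\ref{lem-uniq-A1-weak}. The one-sided inequality \eqref{eq:uniq-A1-all-v} is then read off from the translated local identities \eqref{eq:uniq-A1-translated-weak}, with the frozen point chosen to be the evaluation point $x$ itself.

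\textbf{Decomposition and properties (i)--(ii).} I apply Lemma~\ref{lem-uniq-A1-weak} on the ball $D_1$, which contains $D$, and write $dw^\delta=a\,dt+\psi\,dW_t+\mu$. Since $\operatorname{supp}\rho_\delta\subset B_{\delta/4}$, for $x$ in a neighborhood of $\overline{D_0}$ the function $y\mapsto\rho_\delta(x-y)$ is an admissible test in $C_c^\infty(D_1)$. Pairing the identity with it gives \eqref{eq:uniq-A1-q-decomp} with
\[
 a_0(t,x)=\langle a_t,\rho_\delta(x-\cdot)\rangle,\qquad
 \psi_0(t,x)=\langle\psi_t,\rho_\delta(x-\cdot)\rangle,\qquad
 K_t(x)=\int_{(0,t]\times D_1}\rho_\delta(x-y)\,\mu(ds,dy).
\]
I then verify (i) and (ii) as follows.
\begin{itemize}
\item Spatial derivatives fall on $\rho_\delta$. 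Hence $\|D^\alpha a_0(t)\|_{C}\le C_{\alpha,\delta}\|a_t\|_{H^{-1}}$ and similarly for $\psi_0$, which gives square-integrability and arbitrary smoothness.
\item Positivity and monotonicity of $K$ in $t$ follow from $\mu\ge0$ and $\rho_\delta\ge0$.
\item Continuity and adaptedness at each $x$ come from Lemma~\ref{lem-uniq-A1-weak}(iii). They are made uniform in $x$ and in $H^j$ by the bound $\|K_t-K_s\|_{C^j(\overline{D_0})}\le C\mu((s,t]\times D')$ and the nonatomic time marginal.
\item The same bound gives $\operatorname{Var}_{H^j}(K)\le C\mu([0,T]\times D')$, whose second moment is finite by Lemma~\ref{lem-uniq-A1-weak}(ii).
\end{itemize}
The identity $q(T)-q(t)=\dots$ for all $(t,x)$ simultaneously then follows from continuity in $x$.

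\textbf{Inequality (iii).} Fix $x\in\overline{D_0}$ and $v\in U$. I rerun Step~1 of the proof of Lemma~\ref{lem-uniq-A1-weak} with center $x_i:=x$ and frozen control $v_0:=v$, but over the region $z\in B_{\delta/4+2h'}(x)$ with a smaller translation radius $h'$, so that the shaking $\epsilon_s=x-z-A_s$ still satisfies $|\epsilon|<\delta$. This yields a local identity near $B_{\delta/4}(x)$:
\[
 dw=-\bigl[\tfrac12a_x:D^2w+\textstyle\sum_k\sigma_x^k\cdot D\tilde\psi^k+\beta_x\cdot Dw+f_x\bigr]ds+\tilde\psi\,dW+\tilde\mu,\qquad \tilde\mu\ge0,
\]
where $(\beta_x,\sigma_x,f_x)=(\beta,\sigma,f)(s,x,v)$. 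I compare it with the global decomposition, tested against $\rho_\delta(x-\cdot)$ and restricted to the restart intervals. Uniqueness of the continuous semimartingale decomposition (the argument used for overlaps in Step~2) gives $\tilde\psi=\psi$ a.e. there. Equating the finite-variation parts then gives, as measures in time,
\[
 dK_t(x)+\bigl[a_0+\mathcal A^v(q,\psi_0)+f\bigr](t,x,v)\,dt
 =\langle\tilde\mu(dt,\cdot),\rho_\delta(x-\cdot)\rangle\ge0 .
\]
The crucial point is that the coefficients are frozen at $x$, which is spatially constant in $y$. Therefore convolution commutes with them, so $\rho_\delta*(\tfrac12a_x:D^2w)=\tfrac12a_x:D^2q(x)$ and similarly for the other terms; this produces exactly $\mathcal A^v(q,\psi_0)(t,x)$ with the unfrozen coefficients evaluated at $x$. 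Decomposing $dK(x)=k(t,x)\,dt+dK^{\mathrm s}(x)$, the singular part is nonnegative and mutually singular with $dt$. Taking the absolutely continuous part therefore yields \eqref{eq:uniq-A1-all-v} for a.e. $t$, at fixed $(x,v)$.

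\textbf{Removing null sets, and the main obstacle.} To pass to all $(x,v)$, I first use Sobolev embedding with $j>d/2+2$: the $H^j$-valued density $k$ of $dK$ has a $C^2$ spatial version, and its pointwise value agrees $dt$-a.e. with the density of $dK(x)$. I then take countable dense sets of $x\in\overline{D_0}$ and $v\in U$, and conclude using continuity of all terms in $(x,v)$, which holds by $(\cA1)$ and \eqref{eq:A1-cost-modulus}. I expect the main obstacle to be the identification step: setting up the centered local decomposition for arbitrary $x$ so that its support covers all of $\operatorname{supp}\rho_\delta(x-\cdot)$ while keeping $|\epsilon|<\delta$, and justifying the equality of martingale and drift parts on random restart intervals, which forces the use of the pathwise uniqueness argument rather than a deterministic-time one.
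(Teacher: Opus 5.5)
Your treatment of (i)--(ii) is the paper's own. You convolve the weak identity of Lemma~\ref{lem-uniq-A1-weak} on $D_1$ and bound the $H^j$ increments of $K$ by $\mu$-masses, using the nonatomic time marginal for continuity.

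For (iii) you take a genuinely different, and valid, route.

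\textbf{The paper's route.} The paper never re-centers the translation construction. It uses the constant shake $\epsilon_s=y$ to show that $q(s,X_s)+\int f\,dr$ is a submartingale along every controlled path. It then expands $q(s,X_s)$ by an It\^o--Kunita--Wentzell--Krylov formula extended to the measure term $K$, via a Riemann-sum argument for $K_t(X_t)$, and evaluates the singular part along the path. Doob--Meyer uniqueness then gives $R^v_s(X_s)\ge0$. Finally, it needs a delicate step to pass from path evaluations to every $x$ on one common full set: rational starting times, Kolmogorov continuity of $a\mapsto X^{a,x;v}$, and exit-time control.

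\textbf{Your route.} You re-run Step~1 of Lemma~\ref{lem-uniq-A1-weak} with center exactly at the evaluation point $x$ and constant control $v$. The positivity of the transported measure $\tilde\mu$ then encodes the inequality at $x$ directly. Since the coefficients are frozen at $x$, pairing with $\rho_\delta(x-\cdot)$ produces exactly $\mathcal A^v(q,\psi_0)(t,x)$, with no commutator. The identification $\tilde\psi=\psi$, and the equality of the finite-variation parts, come from uniqueness of the continuous semimartingale decomposition of the tested processes. It is cleanest to first concatenate your single-center restart intervals, as in Step~2 with one center and no partition of unity, so that the comparison is on all of $[0,T]$. Evaluating the $H^j$-Lebesgue decomposition at $x$ gives the scalar decomposition of $dK(x)$, since $|dK^{\mathrm s}(x)|\le C|dK^{\mathrm s}|_{H^j}\perp dt$. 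The nonnegative a.c.\ density of the positive right-hand measure then yields \eqref{eq:uniq-A1-all-v} at fixed $(x,v)$. Countable dense sets and continuity finish the proof.

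\textbf{Comparison.} Your route avoids the composition formula with a time-singular measure term and the whole starting-time continuity argument. The price is a separate parameterized Doob--Meyer construction for each of countably many $(x,v)$, plus the cross-decomposition identification. The paper's route works with the single global decomposition and yields the inequality along arbitrary admissible controlled paths.

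\textbf{The support issue is not an obstacle.} With $h=\delta/8$ one has $\operatorname{supp}\rho_\delta(x-\cdot)\subset B_{\delta/4}(x)=B_{2h}(x)\Subset B_{3h}(x)$. So the original Step~1 applies unchanged, and no smaller translation radius is needed.
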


\begin{proof}
{ 
\textbf{Step 1.} We prove assertions (i) and (ii).
Take the weak decomposition of
Lemma~\ref{lem-uniq-A1-weak} on the open ball $D_1$ and convolve in
space. For $x\in\overline{D_0}$, define
\begin{align*}
 a_0(t,x)
 &=\langle a_t,\rho_\delta(x-\cdot)\rangle_{H^{-1}(D_1),H_0^1(D_1)},\\
 \psi_0(t,x)
 &=\int_{D_1}\rho_\delta(x-z)\psi(t,z)\,dz,\\
 K_t(x)
 &=\int_{[0,t]\times D_1}\rho_\delta(x-z)\,\mu(ds,dz),
 \qquad 0\le t\le T.
\end{align*}
Since $\operatorname{supp}\rho_\delta\Subset B_{\delta/4}(0)$,
\[
 \overline{D_0}-\operatorname{supp}\rho_\delta
 \subset\operatorname{int}D,\qquad D\Subset D_1.
\]
Thus $\rho_\delta(x-\cdot)\in C_c^\infty(D_1)$, and all points
sampled by the convolution on $\overline{D_0}$ lie in the compact
ball $D$. In fact, $D-\operatorname{supp}\rho_\delta\Subset D_1$, 
so the same formulas define the fields on $\operatorname{int}D$,
a neighborhood of $\overline{D_0}$. Differentiating the kernels
in the dual pairing and the spatial integral gives, for every
nonnegative integer $r$,
\[
 \|a_0(t)\|_{C^r(\operatorname{int}D)}
 \le C_{\delta,r}\|a_t\|_{H^{-1}(D_1)},\qquad
 \|\psi_0(t)\|_{C^r(\operatorname{int}D)}
 \le C_{\delta,r}\|\psi_t\|_{L^2(D_1)}.
\]
The square integrability follows from the bounds on $a$ and $\psi$
in Lemma~\ref{lem-uniq-A1-weak}. Convolving its weak identity gives
\eqref{eq:uniq-A1-q-decomp}, proving (i).

Turn to the proof of assertion (ii).
Positivity of $\rho$ and $\mu$
gives monotonicity of $K_t(x)$ for every $x\in\overline{D_0}$.
The preceding lemma gives
$E[\mu([0,T]\times D)^2]<\infty$, so this mass is finite a.s.
For $0\le s<t\le T$ and every spatial multi-index $\alpha$,
differentiation under the integral gives
\[
 D^\alpha(K_t-K_s)(x)
 =\int_{(s,t]\times D}D^\alpha\rho_\delta(x-z)\,\mu(dr,dz),
 \qquad x\in\overline{D_0}.
\]
This is justified by the bounded kernel derivatives and the finite
mass of $\mu$ on $[0,T]\times D$. For the fixed ball $D_0$, put
\[
 C_{\delta,j,D}
 :=\sup_{z\in D}\|\rho_\delta(\cdot-z)\|_{H^j(D_0)}<\infty.
\]
The finiteness follows from smoothness of the kernel and compactness
of $D$. Viewing the convolution increment as an $H^j(D_0)$-valued
integral and using the triangle inequality yields
\begin{align*}
 \|K_t-K_s\|_{H^j(D_0)}
 &\le\int_{(s,t]\times D}
       \|\rho_\delta(\cdot-z)\|_{H^j(D_0)}\,\mu(dr,dz)\\
 &\le C_{\delta,j,D}\,\mu((s,t]\times D).
\end{align*}
Consequently, for every finite partition
$\pi=\{0=t_0<\cdots<t_{N_\pi}=T\}$,
\begin{align*}
 \sum_{\ell=1}^{N_\pi}
    \|K_{t_\ell}-K_{t_{\ell-1}}\|_{H^j(D_0)}
 &\le C_{\delta,j,D}\sum_{\ell=1}^{N_\pi}
          \mu((t_{\ell-1},t_\ell]\times D)\\
 &=C_{\delta,j,D}\,\mu((0,T]\times D).
\end{align*}
Taking the supremum over $\pi$, squaring, and taking expectations gives
\[
 \operatorname{Var}_{H^j}(K)
 \le C_{\delta,j,D}\,\mu([0,T]\times D),\qquad
 E[\operatorname{Var}_{H^j}(K)^2]
 \le C_{\delta,j,D}^2 E[\mu([0,T]\times D)^2]<\infty.
\]
The increment estimate also proves continuity: for $t_n\to t$,
\[
 \|K_{t_n}-K_t\|_{H^j(D_0)}
 \le C_{\delta,j,D}\,
       \mu\bigl((t_n\wedge t,t_n\vee t]\times D\bigr)
 \longrightarrow0
 \quad\hbox{a.s.},
\]
because the finite time marginal $\mu(\,\cdot\,\times D)$ has no atoms.}
The same continuity also
follows from \eqref{eq:uniq-A1-q-decomp}: $q$ is locally continuous with
all spatial derivatives after convolution, and its other two
integrals are continuous in {$H^j(D_0)$}.
{This proves (ii).}

  \textbf{Step 2.} We prove assertion (iii).
{Fix an integer $j>d/2+2$ and put
$H=H^j(D_0)$. By (ii), outside a $\bP$-null set, $dK$ is a
countably additive $H$-valued measure of finite total variation.
The vector-valued Lebesgue decomposition theorem
\cite[Chapter~I, \S5, Theorem~9, p.~31]{DiestelUhl1977} gives
unique measures
\[
 dK=dK^{\mathrm{ac}}+dK^{\mathrm s},\qquad
 dK^{\mathrm{ac}}\ll dt,\qquad |dK^{\mathrm s}|_H\perp dt.
\]
Since $H$ is a Hilbert space, it has the Radon--Nikodym property
\cite[Chapter~IV, \S1, Corollary~4, p.~100]{DiestelUhl1977}.
By the definition of this property
\cite[Chapter~III, \S1, Definition~3, p.~61]{DiestelUhl1977},
there is a Bochner-integrable density $k\in L^1([0,T];H)$ such that
$dK^{\mathrm{ac}}=k\,dt$ pathwise.
The total variation formula for a Bochner integral
\cite[Chapter~II, \S2, Theorem~4(iv), p.~46]{DiestelUhl1977},
together with the variation identity in the Lebesgue decomposition
theorem, gives
\[
 \int_0^T\|k_t\|_H\,dt
 =|dK^{\mathrm{ac}}|_H([0,T])
 \le |dK|_H([0,T])
 =\operatorname{Var}_H(K).
\]
}
{To specify a predictable version, set
\[
 k_t=\lim_{n\to\infty}
       n\bigl(K_t-K_{(t-1/n)^+}\bigr)
 \quad\text{in }H^j(D_0)
\]
where the limit exists, and set $k_t=0$ otherwise. Each
approximating process is continuous and adapted, hence predictable.
Hilbert-valued Lebesgue differentiation identifies this limit with
the density of the absolutely continuous part of $dK$, since the singular part
has derivative zero for almost every time. Thus this choice is
predictable and agrees with the density $d\bP\otimes dt$-a.e.}
Taking $j>d/2+2$
gives the claimed spatial continuity {on $\overline{D_0}$}.

The differential inequality is obtained directly from the dynamic
programming principle, avoiding a commutator with random coefficients.
For any original admissible control $v$ and its state process
{$X=X^{a,x;v}$, with $a\in[0,T]$ and $x\in D_0$},
the translated state $X_s-y$ is admissible for the shaken problem
using the constant shake ${\epsilon}_s=y$. Its coefficients are evaluated
at $X_s$. Consequently, for every $y$ in the support of
$\rho_\delta$, the process
\[
 w^\delta(s,X_s-y)+\int_a^s f(r,X_r,v_r)\,dr
\]
is a submartingale.
{More explicitly, for $a\le r\le t\le T$, the DPP and
conditional Fubini give
\begin{align*}
 q(r,X_r)
 &=\int\rho_\delta(y)w^\delta(r,X_r-y)\,dy\\
 &\le E_{\sF_r}\!\left[
       \int\rho_\delta(y)w^\delta(t,X_t-y)\,dy
       +\int_r^t f(s,X_s,v_s)\,ds\right]\\
 &=E_{\sF_r}\!\left[q(t,X_t)+\int_r^t f(s,X_s,v_s)\,ds\right].
\end{align*}
Conditional Fubini is justified by boundedness of $w^\delta$ and
$f$, and $\int\rho_\delta=1$.} Thus
\begin{equation}\label{eq:uniq-A1-q-submart}
 q(s,X_s)+\int_a^s f(r,X_r,v_r)\,dr
 \quad\text{is a submartingale.}
\end{equation}
{Stop the process in \eqref{eq:uniq-A1-q-submart} at
\[
 \tau_{D_0}^{a,x;v}
 :=\inf\{s\in[a,T]:X_s^{a,x;v}\notin D_0\}\wedge T,
 \qquad \inf\varnothing:=\infty.
\]
Before this exit time, $X_s\in D_0$, and the stopped path takes
values in $\overline{D_0}$. All subsequent density statements along
$X$ are for $a\le s<\tau_{D_0}^{a,x;v}$.}

The It\^o-Kunita-Wentzell-Krylov formula for \eqref{eq:uniq-A1-q-decomp} gives the finite
variation part of \eqref{eq:uniq-A1-q-submart} as
\begin{equation}\label{eq:uniq-A1-fv-on-path}
 \big[a_0+\mathcal A^v(q,\psi_0)+f\big](s,X_s)\,ds
                  +dK_s(X_s).
\end{equation}
Here $dK_s(X_s)$ means integration of the spatially smooth measure
$dK_s(x)$ evaluated at {$x=X_s\in\overline{D_0}$},
not the differential of $K_s(X_s)$.
For a direct justification, write $F=q-K$, so that
$dF=a_0\,dt+\psi_0\,dW$. The local spatial regularity and integrability
above allow the usual It\^o-Kunita-Wentzell-Krylov formula for $F$, after a spatial
cutoff {equal to one on a neighborhood of
$\overline{D_0}$ and supported where the convolution is defined}.
It remains to compute $K(t,X_t)$. All identities below are
stopped at the preceding exit time; we suppress this stopping in the
notation. {Here and below, the $C^1$ and $C^2$ norms and
total variations are taken on $\overline{D_0}$.} For a partition
$\pi=\{a=t_0<\cdots<t_N=t\}$ of mesh
$|\pi|=\max_j(t_j-t_{j-1})$, split each increment as
\begin{align*}
 K_{t_j}(X_{t_j})-K_{t_{j-1}}(X_{t_{j-1}})
 ={}& (K_{t_j}-K_{t_{j-1}})(X_{t_j})\\
 &+K_{t_{j-1}}(X_{t_j})-K_{t_{j-1}}(X_{t_{j-1}}).
\end{align*}
If $\omega_X(h)=\sup_{|r-s|\le h}|X_r-X_s|$, the difference between
the sum of the first terms and $\int_a^t dK_s(X_s)$ is at most
\[
 \operatorname{Var}_{C^1}(K)\,\omega_X(|\pi|)\longrightarrow0
 \quad\text{a.s}.
\]
For the second terms, $K_{t_{j-1}}$ is an
$\sF_{t_{j-1}}$-measurable $C^2$ function, so spatial It\^o's formula
applies on $[t_{j-1},t_j]$. Continuity of $K$ in $C^2$, boundedness
of the state coefficients, and
$E[\operatorname{Var}_{C^2}(K)^2]<\infty$ give convergence of the
drift integrals in $L^1$ and the stochastic integrals in $L^2$. Hence
\begin{align*}
 K_t(X_t)-K_a(X_a)
 ={}&\int_a^t dK_s(X_s)
     +\int_a^t\mathcal A^{v_s}(K,0)(s,X_s)\,ds\\*
 &+\int_a^t DK_s(X_s)\sigma(s,X_s,v_s)\,dW_s.
\end{align*}
Adding this identity to the formula for $F$ proves
\eqref{eq:uniq-A1-fv-on-path}, including the cross term
$\sum_k\sigma^k\cdot D\psi_0^k$. The field $K$ contributes no
additional quadratic-covariation term.

{Write $dK=k\,ds+dK^{\mathrm s}$ in $H^j(D_0)$,
with $j>d/2+2$, and put
$\lambda^{\mathrm s}:=|dK^{\mathrm s}|_{H^j(D_0)}$.
Work pathwise outside a null set. Since $\lambda^{\mathrm s}\perp ds$,
there is a Borel set $N_\omega\subset[0,T]$ such that
\[
 |N_\omega|=0,\qquad
 \lambda^{\mathrm s}([0,T]\setminus N_\omega)=0.
\]
The Hilbert-valued Radon--Nikodym representation
{(applied to
$dK^{\mathrm s}\ll\lambda^{\mathrm s}$), together with the total
variation formula
\cite[Corollary~IV.1.4 and Theorem~II.2.4(iv)]{DiestelUhl1977}}, gives
\[
 dK_s^{\mathrm s}
   =\kappa_s^{\mathrm s}\,\lambda^{\mathrm s}(ds),\qquad
 \|\kappa_s^{\mathrm s}\|_{H^j(D_0)}=1
 \quad\lambda^{\mathrm s}\text{-a.e.}
\]
Restore the stopping notation by writing
$\tau=\tau_{D_0}^{a,x;v}$. For a Borel set $A\subset[0,T]$, define
the evaluated singular measure by
\[
 \nu_X^{\mathrm s}(A)
 =\int_{A\cap(a,\tau]}
        \kappa_s^{\mathrm s}(X_s)\,\lambda^{\mathrm s}(ds).
\]
The embedding $H^j(D_0)\hookrightarrow C^2(\overline{D_0})$
and $X_s\in\overline{D_0}$ for $a\le s\le\tau$ imply
\begin{align*}
 |\nu_X^{\mathrm s}|(A)
 &\le\int_{A\cap(a,\tau]}
        |\kappa_s^{\mathrm s}(X_s)|\,\lambda^{\mathrm s}(ds)\\*
 &\le C\int_{A\cap(a,\tau]}
        \|\kappa_s^{\mathrm s}\|_{H^j(D_0)}
             \,\lambda^{\mathrm s}(ds)
 \le C\lambda^{\mathrm s}(A).
\end{align*}
Consequently,
$|\nu_X^{\mathrm s}|([0,T]\setminus N_\omega)=0$ and
$\nu_X^{\mathrm s}\perp ds$. Thus evaluation preserves singularity
in time; it cannot create mass outside $N_\omega$.

For $a\le t\le T$, set
\begin{align*}
 S_t^v
 &:=q(t\wedge\tau,X_{t\wedge\tau})
       +\int_a^{t\wedge\tau}f(s,X_s,v_s)\,ds,\\
 M_t^v
 &:=\int_a^{t\wedge\tau}
       [\psi_0(s,X_s)+Dq(s,X_s)\sigma(s,X_s,v_s)]\,dW_s,\\
 R_s^v(x)
 &:=k(s,x)+a_0(s,x)
       +\mathcal A^{v_s}(q,\psi_0)(s,x)+f(s,x,v_s),\\
 C_t^v
 &:=\int_a^{t\wedge\tau}R_s^v(X_s)\,ds
       +\nu_X^{\mathrm s}((a,t]).
\end{align*}
The preceding composition formula gives
\[
 S_t^v=S_a^v+M_t^v+C_t^v.
\]
All spatial norms in the following bounds are over
$\overline{D_0}$. It\^o's isometry and boundedness of $\sigma$ give
\[
 E|M_T^v|^2
 \le C E\int_0^T
       \bigl(\|\psi_0(s)\|_\infty^2+\|Dq(s)\|_\infty^2\bigr)\,ds
 <\infty.
\]
Moreover,
\begin{align*}
 E\operatorname{Var}_{[a,T]}(C^v)
 &\le E\int_a^\tau |R_s^v(X_s)|\,ds
        +C E\lambda^{\mathrm s}([0,T])\\
 &\le C E\!\left[
       \int_0^T\bigl(\|a_0(s)\|_\infty
              +\|q(s)\|_{C^2}+\|\psi_0(s)\|_{C^1}+1\bigr)\,ds
       +\operatorname{Var}_{H^j(D_0)}(K)\right]
 <\infty.
\end{align*}
Here we used
$\int_0^T\|k_s\|_{H^j(D_0)}\,ds
\le\operatorname{Var}_{H^j(D_0)}(K)$ and
$\lambda^{\mathrm s}([0,T])
\le\operatorname{Var}_{H^j(D_0)}(K)$.
Since $C^v=S^v-S_a^v-M^v$, it is continuous and adapted,
hence predictable. The stopped process $S^v$ is a bounded
submartingale. Uniqueness of its Doob--Meyer decomposition therefore
implies that $C^v$ is increasing. Since $\nu_X^{\mathrm s}\perp ds$,
the absolutely continuous part of its measure is
\[
 (dC^v)^{\mathrm{ac}}(ds)
 =\mathbf1_{(a,\tau]}(s)R_s^v(X_s)\,ds\ge0.
\]
It follows that, on $a<s<\tau$,}
\[
 [k+a_0+\mathcal A^v(q,\psi_0)+f](s,X_s)\ge0
 \quad d\bP\otimes ds\text{-a.e.}
\]

{To obtain one null set for all $x\in\overline{D_0}$
and $v\in U$, choose a countable dense set $\mathcal D_U\subset U$,
and put
\[
 \mathcal D_x:=D_0\cap\mathbb Q^d,\qquad
 \mathcal Q_T:=\mathbb Q\cap[0,T).
\]
For a constant control $v$, use the residual $R_s^v(x)$ defined
above with $v_s\equiv v$. For each
$(a,x,v)\in\mathcal Q_T\times\mathcal D_x\times\mathcal D_U$,
the preceding inequality says that
\[
 \mathcal N_{a,x,v}
 :=\bigl\{(\omega,s):a<s<\tau_{D_0}^{a,x;v}(\omega),\
                  R_s^v(X_s^{a,x;v})<0\bigr\}
\]
has $d\bP\otimes ds$-measure zero. By countability, the same is
true of
\[
 \mathcal N
 :=\bigcup_{(a,x,v)\in\mathcal Q_T\times\mathcal D_x\times\mathcal D_U}
       \mathcal N_{a,x,v}.
\]
Outside $\mathcal N$, the density inequality holds  
for all such triples, whenever $a<s<\tau_{D_0}^{a,x;v}$.

For the underlying unstopped SDE, fix $(x,v)\in\mathcal D_x\times
\mathcal D_U$ and extend $X^{a,x;v}$ by $x$ before its starting
time $a$. All trajectories below use the same Brownian motion and
the same constant control $v$. For every $p>2$ and $|a-b|\le1$,
the standard estimates on the stability of SDEs with bounded Lipschitz 
coefficients give
\[
 E\left[\sup_{0\le s\le T}
       |X_s^{a,x;v}-X_s^{b,x;v}|^p\right]
 \le C_p|a-b|^{p/2}.
\]
Since $p/2>1$, Kolmogorov's continuity criterion applies with values
in the separable Banach space $\mathbb B=C([0,T];\bR^d)$.
For any fixed $0<\alpha<1/2-1/p$, it gives versions satisfying
\[
 \|X^{a,x;v}-X^{b,x;v}\|_{\mathbb B}
 \le C_{x,v}(\omega)|a-b|^\alpha,\qquad a,b\in[0,T],
\]
where $C_{x,v}(\omega)<\infty$ a.s. Taking a countable intersection
over $(x,v)\in\mathcal D_x\times\mathcal D_U$ gives a common
event $\Omega_*$ of probability one. We also require on $\Omega_*$
that these versions agree with the original trajectories for all
$a\in\mathcal Q_T$; this is possible by countability.
Enlarge $\mathcal N$ by
$(\Omega\setminus\Omega_*)\times[0,T]$ and by the product-null set
where the residual is not continuous in $(x,v)$.

Fix $(\omega,s)\notin\mathcal N$ with $0<s<T$ and
$(x,v)\in\mathcal D_x\times\mathcal D_U$.
For rational $b_m\downarrow s$ with $b_m>s$, the extension convention
gives $X_r^{b_m,x;v}=x$ for $0\le r\le s$. Hence
\[
 \sup_{0\le r\le s}|X_r^{s,x;v}-x|
 \le\|X^{s,x;v}-X^{b_m,x;v}\|_{\mathbb B}
 \longrightarrow0.
\]
Thus the limiting trajectory $X^{s,x;v}$ also equals $x$ on $[0,s]$.
Now choose rational $a_n\uparrow s$ with $a_n<s$. Then
\[
 S_n:=\sup_{a_n\le r\le s}|X_r^{a_n,x;v}-x|
 \le\|X^{a_n,x;v}-X^{s,x;v}\|_{\mathbb B}
 \longrightarrow0.
\]
Put $d_x=\operatorname{dist}(x,\partial D_0)>0$ and
$\tau_n=\tau_{D_0}^{a_n,x;v}$. For sufficiently large $n$,
$S_n<d_x/2$. If $\tau_n\le s<T$, continuity of the trajectory and
the definition of the exit time give
$X_{\tau_n}^{a_n,x;v}\in\partial D_0$, and therefore
\[
 d_x\le |X_{\tau_n}^{a_n,x;v}-x|\le S_n<d_x/2,
\]
a contradiction. Thus $\tau_n>s$. Independently, the convergence
$S_n\to0$ gives
\[
 |X_s^{a_n,x;v}-x|\le S_n\longrightarrow0.
\]
Since $(\omega,s)\notin\mathcal N$, the density inequality holds
along each of these rational-start trajectories before its exit.
Spatial continuity consequently yields
\[
 R_s^v(x)=\lim_{n\to\infty}R_s^v(X_s^{a_n,x;v})\ge0,
 \qquad (x,v)\in\mathcal D_x\times\mathcal D_U.
\]
For any $x\in\overline{D_0}$, choose $x_m\in\mathcal D_x$ with
$x_m\to x$. For every $v\in\mathcal D_U$,
\[
 R_s^v(x)=\lim_{m\to\infty}R_s^v(x_m)\ge0.
\]
Finally, for any $v\in U$, choose $v_m\in\mathcal D_U$ with
$v_m\to v$. Continuity of the coefficients in $v$ gives
\[
 R_s^v(x)=\lim_{m\to\infty}R_s^{v_m}(x)\ge0,
 \qquad (x,v)\in\overline{D_0}\times U.
\]
All these limits are taken at the same $(\omega,s)\notin\mathcal N$,
so this proves \eqref{eq:uniq-A1-all-v} on one common
$d\bP\otimes ds$-full set. Since $x_*$ and $R$ were arbitrary,
this proves (iii) on any fixed compact spatial ball.}
\end{proof}

\begin{lem}[Removing the positive finite-variation part]
\label{lem-uniq-A1-regularize}
Fix a compact spatial ball $D=\overline B_R(x_*)$ on which \eqref{eq:uniq-A1-q-decomp} and
\eqref{eq:uniq-A1-all-v} hold, with the regularity of Lemma~\ref{lem-uniq-A1-q} on
 an open neighborhood $O\supset D$.
There are adapted fields $q_n$ on $[0,T]\times O$ and predictable
fields $(\mathfrak d_tq_n,\mathfrak d_\omega q_n)$ satisfying:
\begin{enumerate}
      \item [(i)] 
      For every $n\ge1$ and every open ball $B\Subset O$,
 \begin{align*}
 &\|q_n\|_{\cS^2(C(\overline B))}^2
   +\|q_n\|_{\cL^2(C^2(\overline B))}^2
 \quad
   +\|\mathfrak d_tq_n\|_{\cL^2(C(\overline B))}^2
   +\|\mathfrak d_\omega q_n\|_{\cL^2(C^1(\overline B;\bR^m))}^2
   <\infty;
 \end{align*}
\item [(ii)] 
There holds a.s.,
\begin{equation*} 
 q_n(t,x)=q_n(T,x)
   -\int_t^T\mathfrak d_sq_n(s,x)\,ds
   -\int_t^T\mathfrak d_\omega q_n(s,x)\,dW_s,
 \qquad (t,x)\in[0,T]\times O;
\end{equation*}
 \item [(iii)] On $[0,T]\times D$,
\begin{align}\label{eq:uniq-A1-qn}
 &q_n\le q,\qquad q_n\longrightarrow q
       \quad\text{in } \cS^2(C^2(D)),\notag\\
 &\mathfrak d_tq_n+\mathcal A^v(q_n,\mathfrak d_\omega q_n)+f(t,x,v)\ge0
       \quad\forall\, (x,v)\in D\times U,\quad d\bP\otimes dt\text{-a.e.}
\end{align}
\end{enumerate}
\end{lem}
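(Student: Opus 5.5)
The plan is to remove the positive finite-variation part $K$ from \eqref{eq:uniq-A1-q-decomp} by an adapted backward time average, and then to restore the one-sided differential inequality with a nonnegative scalar BSDE correction, as announced before Lemma~\ref{lem-uniq-A1-param-dm}.

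\textbf{Construction.} Write $F:=q-K$ on $[0,T]\times O$. By Lemma~\ref{lem-uniq-A1-q}(i), $dF=a_0\,dt+\psi_0\,dW$, with smooth square-integrable coefficients. Extend $K_t=0$ for $t<0$ and set
\[
 K^n_t(x):=n\int_{t-1/n}^{t}K_s(x)\,ds .
\]
Since $K$ is continuous and adapted with values in $H^j$ for every $j$, each $K^n$ is adapted. It is $C^1$ in time with density
\[
 k^n_t:=n\,(K_t-K_{t-1/n}),
\]
which is continuous, adapted and spatially smooth. Because $K$ is increasing in time, $0\le K^n\le K$ and $k^n\ge0$. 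Moreover, $K^n\to K$ in $\cS^2(C^2(D))$. Indeed, $\sup_t\|K^n_t-K_t\|_{C^2}$ is bounded by the modulus of continuity of $t\mapsto K_t$ in $H^j\hookrightarrow C^2$, and it is dominated by $\operatorname{Var}_{H^j}(K)\in L^2(\Omega)$. Dominated convergence then gives the claim.

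\textbf{Correction.} Next I choose a nonnegative predictable scalar error $e_n$ that dominates, uniformly in $(x,v)\in D\times U$,
\[
 \bigl[k+a_0+\mathcal A^v(q,\psi_0)+f\bigr]-\bigl[k^n+a_0+\mathcal A^v(F+K^n,\psi_0)+f\bigr]
 =(k-k^n)+\mathcal A^v(K-K^n,0).
\]
The second term is bounded by $C\|K_t-K^n_t\|_{C^2(D)}$, by boundedness of the coefficients. For the first term, the positive singular part only helps, so pointwise $k^n_t\ge n\int_{t-1/n}^t k_s\,ds$. Hence
\[
 (k-k^n)^+\le \Bigl\|k_t-n\int_{t-1/n}^t k_s\,ds\Bigr\|_{C(D)},
\]
which tends to $0$ for a.e.\ $t$ by Hilbert-valued Lebesgue differentiation. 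Its time integral is bounded by $2\operatorname{Var}_{H^j}(K)$, so $E(\int_0^T e_n\,dt)^2\to0$ by dominated convergence. I then set
\[
 y_n(t)=E_{\sF_t}\int_t^T e_n(s)\,ds\ge0,\qquad -dy_n=e_n\,dt-z_n\,dW,
\]
and define $q_n:=F+K^n-y_n$, so that
\[
 \mathfrak d_tq_n=a_0+k^n+e_n,\qquad \mathfrak d_\omega q_n=\psi_0-z_n .
\]
With these choices, assertion (ii) is immediate. The bound $q_n\le q$ follows from $K^n\le K$ and $y_n\ge0$. The inequality in \eqref{eq:uniq-A1-qn} follows from \eqref{eq:uniq-A1-all-v} plus the choice of $e_n$, because $y_n$ and $z_n$ are spatially constant and so do not enter $\mathcal A^v$. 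Convergence in $\cS^2(C^2(D))$ follows from $K^n\to K$ and from Doob's inequality, which gives $\|y_n\|_{\cS^2}\le C\|\int_0^Te_n\|_{L^2}\to0$.

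\textbf{Main obstacle.} I expect the hard step to be assertion (i), specifically $\mathfrak d_tq_n\in\cL^2(C(\overline B))$. For fixed $n$, $k^n$ is fine, since $\|k^n_t\|\le n\operatorname{Var}(K)$. The problem is $e_n$: the term $(k-k^n)^+$ is controlled only by $k$, which is merely $L^1$ in time. I would try first to replace the comparison with $k$ by a comparison with averaged residuals. Averaging \eqref{eq:uniq-A1-all-v} over $[t-1/n,t]$ gives
\[
 k^n_t\ge -\,n\int_{t-1/n}^t\sup_{x,v}\bigl[a_0+\mathcal A^v(q,\psi_0)+f\bigr]^+(s)\,ds,
\]
with no $k$ on the right-hand side. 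The resulting error is the time oscillation of $a_0+\mathcal A^v(q,\psi_0)+f$ over a window of length $1/n$, which is an $L^2$ quantity. However, this requires an $L^2$-translation-continuity estimate that is uniform in $v$, which is delicate for merely measurable-in-time coefficients. If that fails, my fallback is to truncate $e_n$ at level $n$ and absorb the truncated remainder into a further stopping or localization argument.
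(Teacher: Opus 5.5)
The overall route is the paper's: backward time averages $K^n$ with nonnegative densities $k^n=n(K_t-K_{t-1/n})$, a nonnegative scalar BSDE correction, and $q_n=q+(K^n-K)-y_n$. The gap is the one you flag in (i), and with your choice of error term it does not close. You dominate the defect through $(k-k^n)^+$, and the only available bound on this is $k$ itself. But $k$ is merely Bochner integrable in time, with $\int_0^T\|k_t\|_{H^j}\,dt\le\operatorname{Var}_{H^j}(K)$. So your $e_n$ lies in $L^2(\Omega;L^1(0,T))$ but in general not in $\cL^2(\bR)$. Then $\mathfrak d_tq_n=a_0+k^n+e_n$ need not lie in $\cL^2(C(\overline B))$, assertion (i) fails, and $q_n$ is not an admissible field for Lemma~\ref{lem-cmp-polynomial-local}. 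Neither remedy repairs this. A uniform-in-$v$ $L^2$ time-translation estimate is not available for coefficients that are only measurable in $t$. Truncating $e_n$ at level $n$ breaks the one-sided inequality exactly where the truncation is active, and you give no mechanism to restore it.

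The missing idea is to compare $k^n$ with the Hamiltonian residual rather than with $k$. Set $g:=-a_0-\bH(t,x,Dq,D^2q,D\psi_0)$, so that \eqref{eq:uniq-A1-all-v} says exactly $k\ge g$, and put $r_n(t):=\sup_{x\in D}(g-k^n_t)^+$.
\begin{itemize}
\item Because $k^n\ge0$, we have $r_n\le\sup_D g^+\le\|a_0\|_\infty+C_L(\|Dq\|_\infty+\|D^2q\|_\infty+\|D\psi_0\|_\infty)+L$, with suprema over $D$. This bound is in $L^2(\Omega\times[0,T])$ and never involves $k$.
\item Because $g\le k$, we have $r_n\le\sup_D(k-k^n)^+$. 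Your own estimate through $n\int_{t-1/n}^tk_s\,ds$ already sends this to $0$ $d\bP\otimes dt$-a.e.
\end{itemize}
Dominated convergence then gives $r_n\to0$ in $\cL^2$. Take $e_n:=r_n+C_L(\|D(K^n-K)\|_\infty+\|D^2(K^n-K)\|_\infty)$. Then $e_n\in\cL^2$, $e_n\to0$ in $\cL^2$, and
\[
\mathfrak d_tq_n+\mathcal A^v(q_n,\mathfrak d_\omega q_n)+f\ge k^n-g+r_n\ge0 .
\]
The rest of your argument goes through unchanged, with $(y_n,z_n)$ now a BSDE solution whose driver is square integrable.
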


\begin{proof}
{\textbf{Step 1. Construction.}
We retain the fields $q,a_0,\psi_0,K$ on $O$ supplied by the
convolution construction in Lemma~\ref{lem-uniq-A1-q}. On each ball
$B\Subset O$, the same convolved weak identity gives
\eqref{eq:uniq-A1-q-decomp}; the fields are not chosen anew on
different balls.
Fix an integer $j>d/2+2$, write
$H^j=H^j(\operatorname{int}D)$, and put
$\mathcal V_j=\operatorname{Var}_{H^j}(K)$.
We write $\|u\|_\infty=\sup_{x\in D}|u(x)|$.}
Extend $K_t=0$ for $t<0$, and put
\[
 K_t^n=n\int_{t-1/n}^tK_s\,ds,\qquad
 k_t^n=n(K_t-K_{t-1/n}),\qquad d_n=K^n-K.
\]
{These fields are defined on $O$ and are continuous and
adapted, hence predictable.}
Pointwise monotonicity gives $k^n\ge0$ and $d_n\le0$.
{Since $K_0=0$, the extension remains continuous in $H^j$.
Consequently, pathwise,
\[
 \sup_{0\le t\le T}\|d_n(t)\|_{H^j}
 \le\sup_{\substack{-1\le s\le t\le T\\t-s\le1/n}}
       \|K_t-K_s\|_{H^j}\longrightarrow0,
 \qquad
 \sup_{0\le t\le T}\|d_n(t)\|_{H^j}\le\mathcal V_j.
\]
Since $E[\mathcal V_j^2]<\infty$, dominated convergence gives
$d_n\to0$ in $\cS^2(H^j)$. The embedding
$H^j(\operatorname{int}D)\hookrightarrow C^2(D)$ therefore yields
$d_n\to0$ in $\cS^2(C^2(D))$.

For the time derivatives, extend $k$ and $dK^{\mathrm s}$ by zero
before time zero. The Lebesgue decomposition of $dK$ gives
\[
 k_t^n
 =n\int_{t-1/n}^t k_s\,ds
   +n\,dK^{\mathrm s}((t-1/n,t]).
\]
Banach-valued Lebesgue differentiation gives ($d\bP\otimes dt$-a.e.) convergence of the
first term to $k_t$ in $H^j$.
{For the second term, fix a $\omega$
outside the exceptional probability-zero set and put
$\nu_\omega=|dK^{\mathrm s}(\omega)|_{H^j}$, extended by zero
outside $[0,T]$. This finite positive total variation measure is
concentrated on the same Lebesgue-null set as
$dK^{\mathrm s}(\omega)$, so $\nu_\omega\perp dt$.
The Lebesgue differentiation theorem for finite measures gives
\[
 \frac{\nu_\omega((t-h,t+h))}{2h}\longrightarrow0
 \quad(h\downarrow0),\qquad\text{for (Lebesgue-)a.e. }t.
\]
Taking $h=1/n$ and using the definition of total variation and
the inclusion $(t-1/n,t]\subset(t-1/n,t+1/n)$, we obtain
\[
 \begin{aligned}
 \bigl\|n\,dK^{\mathrm s}(\omega)((t-1/n,t])\bigr\|_{H^j}
 &\le n\,\nu_\omega((t-1/n,t])\\
 &\le2\,\frac{\nu_\omega((t-1/n,t+1/n))}{2/n}
 \longrightarrow0.
 \end{aligned}
\]
The exceptional time set may depend on $\omega$. Combining the
two terms above gives $k_t^n\to k_t$ in $H^j$, $d\bP\otimes dt$-a.e. 
Joint measurability and Fubini's theorem
therefore give this convergence $d\bP\otimes dt$-a.e.; Sobolev
embedding gives the same convergence in $C^2(D)$.}}

Define
\[
 g=-a_0-\bH(t,x,Dq,D^2q,D\psi_0),\qquad
 r_n(t)=\sup_{{x\in D}}(g(t,x)-k_t^n(x))^+.
\]
By \eqref{eq:uniq-A1-all-v}, we have $d\bP\otimes dt$-a.e. $k\ge g$ and 
\[
 0\le r_n(t)\le\|k_t-k_t^n\|_{C(D)}\longrightarrow0.
\]
Also, $k^n\ge0$ gives $r_n\le\|g^+\|_\infty$, and
\[
 \|g^+\|_\infty
 \le\|a_0\|_\infty
   +C_L\bigl(\|Dq\|_\infty+\|D^2q\|_\infty
                         +\|D\psi_0\|_\infty\bigr)+L
 \in L^2(\Omega\times[0,T]).
\]
Here $C_L$ is a deterministic constant depending on the coefficient
bounds. This domination does not require $k\in L^2$.
Dominated convergence gives
$\|r_n\|_{\cL^2}\to0$. 
{Separability of $U$ and continuity in $v$ allow the
Hamiltonian infimum to be taken over a countable dense subset of $U$.
The uniform spatial Lipschitz bounds on the coefficients and spatial
continuity of $Dq,D^2q,D\psi_0$ give a common spatial modulus of
continuity for the expressions under this infimum. Thus $g$ is
spatially continuous and predictable. Taking the supremum defining
$r_n$ over a countable dense subset of $D$ shows that $r_n$ is
predictable.}

For a deterministic coefficient bound $C_L$, set
\[
 e_n=r_n+C_L(\|Dd_n\|_\infty+\|D^2d_n\|_\infty),\qquad
 \eta_n(t)=E\!\left[\int_t^T e_n(s)\,ds\,\middle|\,\sF_t\right].
\]
{Then $e_n\ge0$ and $e_n\to0$ in $\cL^2(\bR)$.
Standard BSDE theory and its $L^2$ estimates
(see \cite[Section~2.1]{Karoui_Peng_Quenez}) give a unique pair
$(\eta_n,\zeta_n)\in\cS^2(\bR)\times\cL^2(\bR^m)$ satisfying
\[
 d\eta_n=-e_n\,dt+\zeta_n\,dW,\qquad \eta_n(T)=0.
\] 
 Taking conditional
expectations identifies $\eta_n$ with the expression above. Moreover, for a constant $C_T$ independent of $n$,
\[
 \|\eta_n\|_{\cS^2}^2+\|\zeta_n\|_{\cL^2}^2
 \le C_T\|e_n\|_{\cL^2}^2\longrightarrow0.
\]}
Define
\[
 q_n=q+d_n-\eta_n.
\]
{\textbf{Step 2. Assertion (i).}
The field $q_n$ is adapted. Set
\[
 \mathfrak d_tq_n=a_0+k^n+e_n,\qquad
 \mathfrak d_\omega q_n=\psi_0-\zeta_n.
\]
These coefficient fields are predictable: this holds for
$a_0,\psi_0$ by the preceding construction, for $k^n$ by continuity
and adaptedness, and for $e_n,\zeta_n$ by their definitions above.

Fix an open ball $B\Subset O$ and put
$\mathcal V_{j,B}=\operatorname{Var}_{H^j(B)}(K)$.
The estimates of Step 1, now on $B$, give
\[
 \begin{aligned}
 \|d_n\|_{\cS^2(C^2(\overline B))}^2
   &\le C_{B,j}E[\mathcal V_{j,B}^2]<\infty,\\
 \|k^n\|_{\cL^2(C^2(\overline B))}^2
   &\le C_{B,j}n^2T\,E[\mathcal V_{j,B}^2]<\infty.
 \end{aligned}
\]
For the field $q=\rho_\delta*w^\delta$, joint continuity of
$w^\delta$ and \eqref{eq:uniq-A1-w-bounds} imply, for every spatial
multi-index $\alpha$,
\[
 D^\alpha q(t,x)
 =\int D^\alpha\rho_\delta(x-z)w^\delta(t,z)\,dz,\qquad
 \sup_{t,x}|D^\alpha q(t,x)|
 \le L(T+1)\|D^\alpha\rho_\delta\|_{L^1}.
\]
These derivatives are jointly continuous; hence
$q\in\cS^2(C^r(\overline B))$ for every nonnegative integer $r$.
In particular,
\[
 \|q_n\|_{\cS^2(C^2(\overline B))}
 \le\|q\|_{\cS^2(C^2(\overline B))}
     +\|d_n\|_{\cS^2(C^2(\overline B))}
     +\|\eta_n\|_{\cS^2(\bR)}<\infty.
\]
This gives both the $\cS^2(C(\overline B))$ bound and
\[
 \|q_n\|_{\cL^2(C^2(\overline B))}^2
 \le T\|q_n\|_{\cS^2(C^2(\overline B))}^2<\infty.
\]
Finally, the preceding lemma and the bounds on
$k^n,e_n,\zeta_n$ give
\[
 \mathfrak d_tq_n\in\cL^2(C(\overline B)),\qquad
 \mathfrak d_\omega q_n\in\cL^2(C^1(\overline B;\bR^m)).
\]
This proves all four bounds in (i).

\textbf{Step 3. Assertion (ii).}
On the fixed ball $B$, the time average satisfies
$dK^n=k^n\,dt$ in $H^j(B)$. Combining
\eqref{eq:uniq-A1-q-decomp} with the identity for $\eta_n$ cancels
the entire $dK$ term and gives
\[
 q_n(t)-q_n(0)
 =\int_0^t(a_0+k^n+e_n)(s)\,ds
   +\int_0^t(\psi_0-\zeta_n)(s)\,dW_s
 \quad\text{in }H^j(B).
\]
The arbitrarily high spatial regularity of $a_0,\psi_0$ and the
$H^j(B)$ variation bound for $K$ imply
\[
 a_0+k^n+e_n\in\cL^2(H^j(B)),\qquad
 \psi_0-\zeta_n\in\cL^2(H^j(B;\bR^m)).
\]
The drift integral is continuous in $H^j(B)$, and the stochastic
integral has a continuous $H^j(B)$-valued version, with
\[
 E\sup_{0\le t\le T}
   \left\|\int_0^t(\psi_0-\zeta_n)(s)\,dW_s\right\|_{H^j(B)}^2
 \le4E\int_0^T\|\psi_0-\zeta_n\|_{H^j(B;\bR^m)}^2\,ds<\infty.
\]
Sobolev embedding therefore makes the identity   for
all $(t,x)\in[0,T]\times\overline B$ a.s.
Choose a countable cover of $O$ by balls compactly contained in
$O$, and intersect these probability-one events for all balls and
all $n$. The versions agree on overlaps because they integrate
the same coefficient fields. Subtracting the identity at $t$
from the identity at $T$ proves (ii) on
$[0,T]\times O$ a.s.

\textbf{Step 4. Assertion (iii).}
Since $\eta_n$ and $\zeta_n$ are spatially constant,
$Dq_n=Dq+Dd_n$, $D^2q_n=D^2q+D^2d_n$, and
$D\mathfrak d_\omega q_n=D\psi_0$.}
{More explicitly, for every $v\in U$,
\[
\begin{aligned}
 &\mathfrak d_tq_n+\mathcal A^v(q_n,\mathfrak d_\omega q_n)+f\\
 &\quad=a_0+k^n+e_n+\mathcal A^v(q,\psi_0)+f
       +\beta\cdot Dd_n
       +\tfrac12\sigma\sigma':D^2d_n\\
 &\quad\ge k^n-g+e_n
       -C_L\bigl(\|Dd_n\|_\infty+\|D^2d_n\|_\infty\bigr)\\
 &\quad=k^n-g+r_n\ge0.
\end{aligned}
\]
}
The definition
of $e_n$ therefore gives the last inequality in \eqref{eq:uniq-A1-qn}.
{This holds on the common $d\bP\otimes dt$-full set
used in Step 1, for every $(x,v)\in D\times U$.
Moreover, $d_n\le0$ and $\eta_n\ge0$ give $q_n\le q$ on
$[0,T]\times D$ a.s., and
\[
 \|q_n-q\|_{\cS^2(C^2(D))}
 \le\|d_n\|_{\cS^2(C^2(D))}+\|\eta_n\|_{\cS^2}
 \longrightarrow0.
\]
This completes the proof of (iii).}
\end{proof}

The lower fields approximate the value of this bounded-data
problem. Indeed, couple the original state with the shaken state
using the same original control and any shaking control with
$|{\epsilon}_s|\le\delta$. The uniform spatial Lipschitz bounds give,
conditionally at the starting time $t$,
\[
 E\!\left[\sup_{t\le s\le T}|X_s^{\theta,{\epsilon}}-X_s^\theta|^2
               \,\middle|\,\sF_t\right]\le C\delta^2.
\]
The cost difference is bounded by $C\delta$, uniformly in the
controls, the starting time, and the state. Since ${\epsilon}=0$ is allowed,
\begin{equation}\label{eq:uniq-A1-shaking-stability}
 0\le V-w^\delta\le C\delta,
 \qquad |q^\delta-w^\delta|\le C\delta.
\end{equation}
The second estimate uses the uniform spatial Lipschitz bound on
$w^\delta$ and the support of $\rho_\delta$. Consequently,
$q^\delta\to V$ uniformly as $\delta\downarrow0$. Moreover,
\[
 q^\delta(T,x)=(G*\rho_\delta)(x),\qquad
 q_n(T,x)\le (G*\rho_\delta)(x),
\]
by the definition of $w^\delta$ and Lemma~\ref{lem-uniq-A1-regularize}.
The shaking control is used only to construct these auxiliary
fields; it does not enter the definition of viscosity solutions.

\subsection{Maximality and uniqueness under $(\cA1)$}
\label{sec-polynomial-comparison}

From now on, we work under Assumption $(\cA1)$ and in the polynomial-growth class
$\cS^2_{\mathrm{pol}}$. The bounded-data constructions above are
applied only to the approximating data of
Lemma~\ref{lem-poly-approximation} under Assumption $(\cA1_{\mathrm b})$.

\begin{lem}[Comparison on a spatial ball]
\label{lem-cmp-polynomial-local}
Let $(\cA1)$ hold, let $D=\overline B_R$ with $R>0$, and let
  $O\subset\bR^d$ be a deterministic open set
with $D\subset O$. Let $\phi$ be an adapted field on $[0,T]\times O$
with predictable fields $(\mathfrak d_t\phi,\mathfrak d_\omega\phi)$
such that, for every open ball $B\Subset O$,
\begin{align*}
 \phi&\in\cS^2(C(\overline B))\cap\cL^2(C^2(\overline B)),\\
 \mathfrak d_t\phi&\in\cL^2(C(\overline B)),\qquad
 \mathfrak d_\omega\phi\in\cL^2(C^1(\overline B;\bR^m)),
\end{align*}
and, a.s.,  
\begin{align*}
 \phi(t,x)=\phi(T,x)
   -\int_t^T\mathfrak d_s\phi(s,x)\,ds
   -\int_t^T\mathfrak d_\omega\phi(s,x)\,dW_s, \qquad \forall\, 
   (t,x)\in[0,T]\times O.
\end{align*}
Then the following comparison statements hold.
\begin{enumerate}
\item[(i)] Suppose that $u$ is a viscosity subsolution,
$\phi(T,\cdot)\ge G$ on $D$ a.s., and, for some deterministic
$0<r_0<R$,
\begin{equation}\label{eq:cmp-polynomial-local-collar}
 u(s,x)-\phi(s,x)\le0
 \quad\text{for }0\le s\le T,\quad r_0\le|x|\le R,
 \quad\text{a.s.}
\end{equation}
If $R_\phi\ge0$ on $D$ $d\bP\otimes dt$-a.e.,
then $u\le\phi$ on $[0,T]\times D$ a.s.

\item[(ii)] Suppose that $u$ is a viscosity supersolution,
$\phi(T,\cdot)\le G$ on $D$ a.s., and, for some deterministic
$0<r_0<R$,
\[
 \phi(s,x)-u(s,x)\le0
 \quad\text{for }0\le s\le T,\quad r_0\le|x|\le R,
 \quad\text{a.s.}
\]
If $R_\phi\le0$ on $D$ $d\bP\otimes dt$-a.e.,
then $\phi\le u$ on $[0,T]\times D$ a.s.
\end{enumerate}
\end{lem}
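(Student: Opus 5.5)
The plan is a strict perturbation of $\phi$, then a Snell-envelope localization of the first ``optimal'' violation time, as in the proof of Theorem~\ref{thm-viscosity-stability}, and finally a contradiction with the exclusion form of Definition~\ref{defn-viscosity}. We treat (i); part (ii) is symmetric, with $\phi-\kappa(T-t+1)$ in place of $\phi+\kappa(T-t+1)$, the maximum of $\phi-u$, and a lower contact test.

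\textbf{Step 1: strict perturbation.} For deterministic $\kappa>0$, put $\phi_\kappa(t,x)=\phi(t,x)+\kappa(T-t+1)$. Then $\mathfrak d_t\phi_\kappa=\mathfrak d_t\phi-\kappa$, while the spatial derivatives and $\mathfrak d_\omega$ are unchanged. Hence $R_{\phi_\kappa}=R_\phi+\kappa\ge\kappa$ on $D$, $d\bP\otimes dt$-a.e. Moreover $(u-\phi_\kappa)(T,\cdot)\le-\kappa$ on $D$, and $u-\phi_\kappa\le-\kappa$ on the collar $[0,T]\times\{r_0\le|x|\le R\}$. It suffices to prove $u\le\phi_\kappa$ on $[0,T]\times D$ a.s. for every $\kappa$, and then let $\kappa\downarrow0$.

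\textbf{Step 2: locating an interior contact.} Set $Y_s=\max_{x\in D}(u-\phi_\kappa)(s,x)$. This is continuous and adapted, and it lies in $\cS^2$ because $u,\phi\in\cS^2(C(D))$. Suppose the claim fails. By continuity there is a deterministic rational $t_0<T$ with $A:=\{Y_{t_0}>0\}$ of positive probability, and $A\in\sF_{t_0}$. Apply Lemma~\ref{lem-snell-envelope} on $[t_0,T]$:
\begin{itemize}
\item take the Snell envelope $Z$, its decomposition $Z=Z_{t_0}+M-K$, and the optimal time $\gamma=\inf\{s\ge t_0:Z_s=Y_s\}$;
\item on $A$, $E_{\sF_{t_0}}Y_\gamma=Z_{t_0}\ge Y_{t_0}>0$, so $\bP(A\cap\{Y_\gamma>0\})>0$;
\item on this event, $\gamma<T$ since $Y_T\le-\kappa$, and every maximizer lies in $B_{r_0}$ by the collar bound.
\end{itemize}
Theorem~\ref{thm-MS} gives an $\sF_\gamma$-measurable maximizer $\xi\in\overline{D}$. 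Choose $k$ with
\[
\Omega_\gamma:=A\cap\{Y_\gamma>0,\ |\xi|<r_0-1/k,\ \gamma<T-1/k\}
\]
of positive probability; then $\Omega_\gamma\in\sF_\gamma$.

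\textbf{Step 3: building the test and the contradiction.} As in the stability proof, define
\[
N_s=E_{\sF_s}\bigl[Z_{t_0}+M_T-K_\gamma\bigr].
\]
For $s\ge\gamma$ we get $N_s=Z_s+K_s-K_\gamma\ge Z_s\ge Y_s$, and $N_\gamma=Y_\gamma$. Hence $F=\phi_\kappa+N$ satisfies $F-u\ge0$ on $[\gamma,T]\times B_{1/k}(\xi)$ and $(F-u)(\gamma,\xi)=0$ on $\Omega_\gamma$. Since $\phi$ lives only on $O$, multiply by a deterministic cutoff equal to one on a neighborhood of $D$, as in Remark~\ref{rmk-viscosity-interpretation}. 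This yields $F\in\mathscr C^2_\sF$, using the $\cS^2/\cL^2$ bounds on balls $B\Subset O$ and \eqref{eq:contact-snell-bounds}. The residual is unchanged on $D$, because $N$ is a spatially constant martingale: $R_F=R_{\phi_\kappa}\ge\kappa$ there. Thus $F\in\underline\cG u(\gamma,\xi;\Omega_\gamma)$, with contact horizon $T$ on $\Omega_\gamma$ (strictly after $\gamma$) and pasted as in the stability proof. Its residual is $\ge\kappa$ on $(\gamma,T)\times B_{1/k}(\xi)$. This contradicts the exclusion formulation of the subsolution property, so $u\le\phi_\kappa$.

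\textbf{Main obstacle.} The delicate step is Step 2: producing an $\sF_\gamma$-measurable event of positive probability on which the contact time is strictly before $T$ and the contact point is at a deterministic positive distance inside the collar. The pathwise test class requires a deterministic radius, a stopping-time center and $\sF_\gamma$-measurability, so the choice of $\gamma$ through the Snell envelope and the conditional positivity $E_{\sF_{t_0}}Y_\gamma>0$ carry the argument. Checking that the cutoff-extended $F$ satisfies the global requirements of Definition~\ref{defn-testfunc} is routine by comparison.
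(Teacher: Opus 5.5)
Your proof is correct and follows essentially the paper's route. Both arguments take the Snell envelope of the spatial maximum of the gap on $D$ and its first contact time. Both use a measurable maximizer that the collar forces into $B_{r_0}$, the martingale correction $E_{\sF_s}[\mathcal M_T-K_\tau]$, and a deterministic cutoff. In both, the residual is left unchanged on $D$, which contradicts the exclusion formulation.

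The only difference is the strictness device. You perturb by the deterministic $\kappa(T-t+1)$ and let $\kappa\downarrow0$ at the end. The paper instead uses the positive part $M_s=\max_D(u-\phi)^+$ and the random $\sF_t$-measurable slope $\alpha(s-t)^+/(2(T-t))$ with $\alpha=M_t$. This gives both the residual gap and $Z_t>E_{\sF_t}Y_T$ at once, without a final limit. A minor further difference is that the paper truncates at a deterministic $T_0$ where you paste at $T-1/k$.
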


\begin{proof}
We only prove assertion (i) for viscosity subsolution, 
as the proof for the supersolution case is similar.

  Set
\begin{align*}
 M_s=\max_{x\in D}(u-\phi)^+(s,x),\quad s\in[0,T].
\end{align*}
Then $M\in\cS^2(\bR)$ with $M_T=0$. Suppose $\alpha=M_t$ is positive with positive
probability at a deterministic $t<T$. Choose $\kappa>0$ with
$\bP(\alpha\ge\kappa)>0$, and let $Z$ be the Snell envelope of
\[
 Y_s=M_s+\frac{\alpha(s-t)}{2(T-t)},\qquad t\le s\le T,
\]
i.e.,
\[
 Z_s=\esssup_{\rho\in\mathcal T^s}E_{\sF_s}[Y_\rho],
 \qquad t\le s\le T,
\]
where $\mathcal T^s$ is the set of stopping times valued in $[s,T]$.
 Since $Y$ is continuous and belongs to $\cS^2(\bR)$,
Lemma~\ref{lem-snell-envelope}(iii), applied on $[t,T]$, shows
that its first contact time
\[
 \rho_*=\inf\{s\in[t,T]:Z_s=Y_s\}
\]
is optimal. Thus
$Z_t=E_{\sF_t}Y_{\rho_*}$, whereas
\begin{align*}
 E_{\sF_t}Y_T=\alpha/2<Y_t\le Z_t
 \quad\text{on }\{\alpha\ge\kappa\}.
\end{align*}
Thus $\bP(\alpha\ge\kappa,\rho_*<T)>0$; otherwise
$\rho_*=T$   a.s. on this $\sF_t$-measurable event,
contradicting the above inequality. Choose a deterministic
$T_0\in(t,T)$ such that
\[
 \tau=\rho_*\wedge T_0,\qquad
 A=\{\alpha\ge\kappa,\ \rho_*\le T_0\}\in\sF_\tau,
 \qquad \bP(A)>0.
\]
Since $\alpha$ is $\sF_t$-measurable and $t\le\tau$,
 Lemma~\ref{lem-snell-envelope}(i) gives
\[
 Z_\tau\ge E_{\sF_\tau}Y_T=\alpha/2.
\]
On $A$, we have $Z_\tau=Y_\tau$ and thus,
\begin{align*}
 M_\tau=Z_\tau-\frac{\alpha(\tau-t)}{2(T-t)}
 \ge\frac{\alpha(T-\tau)}{2(T-t)}>0.
\end{align*}
Theorem~\ref{thm-MS} gives a measurable maximizer $\xi$ in $D$ for the maximization in the definition of $M_{\tau}$.
The collar condition \eqref{eq:cmp-polynomial-local-collar} implies $|\xi|<r_0$ on $A$; set $\xi=0$
on $A^c$.

Since $D=\overline B_R\subset O$ is compact,
we take a deterministic $\ell>0$ such that
$\overline B_{R+2\ell}\subset O$, and choose
\begin{align*}
 \chi\in C_c^\infty(B_{R+2\ell}),\qquad
 0\le\chi\le1,\qquad \chi\equiv1\ \text{on }B_{R+\ell}.
\end{align*}
Putting  
\begin{align*}
 \widehat\phi(s,x)=
 \begin{cases}
  \chi(x)\phi(s,x),&x\in O,\\
  0,&x\notin O,
 \end{cases}
 \qquad s\in[0,T],
\end{align*}
we have
\begin{align*}
 \mathfrak d_s\widehat\phi=\chi\,\mathfrak d_s\phi,\qquad
 \mathfrak d_\omega\widehat\phi=\chi\,\mathfrak d_\omega\phi.
\end{align*}
 Here both products on the right-hand side are
defined on $O$ and extended by zero to $\bR^d\setminus O$.
 Note that for $k=0,1,2$ and
$v\in C^k(\overline B_{R+2\ell})$, the spatial product rule gives
\begin{align*}
 \|\chi v\|_{C^k(\bR^d)}
 \le C_{\chi,k}\|v\|_{C^k(\overline B_{R+2\ell})}.
\end{align*}
 In this estimate, $\chi v$ denotes its extension
by zero outside $B_{R+2\ell}$.
Applying these bounds with
$k=0,2$ to $\phi$, with $k=0$ to $\mathfrak d_s\phi$, and with
$k=1$ to $\mathfrak d_\omega\phi$ proves
$\widehat\phi\in\mathscr C^2_\sF$. Moreover,
$\widehat\phi=\phi$ on $[0,T]\times B_{R+\ell}$, a neighborhood
of $[0,T]\times D$.
Lemma~\ref{lem-snell-envelope}(ii)
gives the Doob--Meyer decomposition
\[
 Z_s=\mathcal M_s-K_s,\qquad t\le s\le T,
\]
where $\mathcal M$ is a square-integrable continuous martingale,
$K$ is continuous and increasing, $K_t=0$, and $E K_T^2<\infty$.
Extend $\mathcal M$ to $[0,T]$ by
$\mathcal M_s=E_{\sF_s}\mathcal M_T$, and put
\[
 N_s=E_{\sF_s}[\mathcal M_T-K_\tau],\qquad 0\le s\le T.
\]
Take
\begin{align*}
 \psi(s,x)=\widehat\phi(s,x)
              -\frac{\alpha(s-t)^+}{2(T-t)}+N_s.
\end{align*}
 
 Since $\mathcal M_T-K_\tau\in L^2(\Omega)$,
the martingale representation theorem yields
\begin{align*}
 N_s=N_0+\int_0^s\zeta_r\,dW_r,
 \qquad E\int_0^T|\zeta_r|^2\,dr<\infty,
\end{align*}
and Doob's inequality gives $N\in\cS^2(\bR)$. 
Then it is straightforward to check that  $\psi\in\mathscr C^2_\sF$.

Since $K_\tau$ is $\sF_\tau$-measurable, for
$\tau\le s\le T$ we have a.s. on $A$,
\begin{align*}
 N_s=\mathcal M_s-K_\tau
     =Z_s+K_s-K_\tau\ge Z_s\ge Y_s.
\end{align*}
On $A$, the contact property gives $N_\tau=Z_\tau=Y_\tau$.
For any deterministic $0<r<R-r_0$, the ball $B_r(\xi)$ lies in
$D$ on $A$. Thus, it holds a.s. on $A$ that
\begin{align*}
 \inf_{x\in B_r(\xi)}(\psi-u)(s,x)
 \ge N_s-Y_s\ge0,\qquad \tau\le s\le T.
\end{align*}
 Equality holds at $(\tau,\xi)$ by the maximizing property of $\xi$.
Therefore, $\psi\in \underline{\cG}u(\tau,\xi;A)$ with $\hat\tau=T$.

On $D$, the scalar corrections leave
$D\phi$, $D^2\phi$, and $D\mathfrak d_\omega\phi$ unchanged.
Consequently, for almost every $s>t$, the residual is exactly
\begin{align*}
 R_\psi=R_\phi+\frac{\alpha}{2(T-t)}.
\end{align*}
Thus, for almost every $s>\tau$ on $A$,
\begin{align*}
 R_\psi\ge\frac{\kappa}{2(T-t)}>0.
\end{align*}
The common null set and Fubini's theorem give the same strict sign
in the pathwise essential limit at $(\tau,\xi)$. This contradicts
Definition~\ref{defn-viscosity}. Countable dense deterministic
points and continuity yield the desired comparison.
\end{proof}

We next construct a barrier for a possibly random polynomial
bound. For a one-sided comparison, choose $r\ge0$ such that either
\[
 \Gamma=\sup_{0\le t\le T,\,x\in\bR^d}
              \frac{u^+(t,x)}{ \varpi(x)^r}\in L^2(\sF_T)
 \quad\text{or}\quad
 \Gamma=\sup_{0\le t\le T,\,x\in\bR^d}
              {\frac{u^-(t,x)}{\varpi(x)^r}}\in L^2(\sF_T),
\]
as appropriate. Fix $a>\max\{r,p_0\}$ and put
$w_a(x)={ \varpi(x)}^a$ and $w_r(x)={ \varpi(x)}^r$.
The linear-growth bound in $(\cA1)$ implies that there are
deterministic $c_1,c_2\ge0$, uniform in the control, with
\[
 \tfrac12\sigma\sigma':D^2w_a
       +\beta\cdot Dw_a\le c_1w_a,
 \qquad |\sigma'Dw_a|\le c_2w_a.
\]
Indeed, $|Dw_a|\le C_aw_a/{ \varpi(x)}$ and
$|D^2w_a|\le C_aw_a/{ \varpi(x)}^2$.
Let $(B,Z^B)\in\cS^2(\bR)\times \cL^2(\bR^m)$ be the unique solution to BSDE (see \cite{El-Karoui-Peng-Quenez-2001}):
\begin{equation}\label{eq:polynomial-barrier-bsde}
 B_t=1+\Gamma+\int_t^T
       \big[(c_1+1)B_s+c_2|Z^B_s|\big]\,ds
                    -\int_t^TZ^B_s\,dW_s,\quad t\in [0,T].
\end{equation} 
 Put $c=c_1+1$, then It\^o's formula gives
\begin{equation*}
 d(e^{ct}B_t)=-c_2e^{ct}|Z^B_t|\,dt+e^{ct}Z^B_t\,dW_t.
\end{equation*}
Since $e^{c\cdot}Z^B\in\cL^2$, integrating from $t$ to $T$ and
taking conditional expectations yield
\[
 B_t=e^{c(T-t)}E_{\sF_t}(1+\Gamma)
     +c_2E_{\sF_t}\!\left[
          \int_t^T e^{c(s-t)}|Z^B_s|\,ds\right].
\]
Consequently, in view of $\Gamma\ge0$, we have
\[
 B_t\ge e^{c(T-t)}E_{\sF_t}(1+\Gamma)
     \ge E_{\sF_t}(1+\Gamma)\ge1.
\]
 Let $H_t=E_{\sF_t}\Gamma$ denote the continuous
version of this square-integrable martingale. For each deterministic $(t,x)$, in view of the 
definition of $\Gamma$ and the adaptedness of $u^+$ , we have
\begin{align*}
 u^+(t,x)=E_{\sF_t}[u^+(t,x)]
 \le H_tw_r(x)\le B_tw_r(x),\qquad\text{a.s.}
\end{align*}
The same calculation applies to $u^-$ for the other choice of $\Gamma$.
Taking a countable dense subset of $[0,T]\times\bR^d$ and using
continuity of $u^+$, $u^-$, $H$, and $B$ yield these inequalities
 for all $(t,x)$, a.s.

Set $P(t,x)=B_tw_a(x)$, by It\^o's formula we have
\begin{align*}
 \mathfrak d_tP(t,x)
 =-\big[(c_1+1)B_t+c_2|Z_t^B|\big]w_a(x),\qquad
 \mathfrak d_\omega P(t,x)=Z_t^Bw_a(x).
\end{align*}
For every $R>0$, boundedness of $(w_a,Dw_a,D^2w_a)$  
on $B_R$ gives
\begin{align*}
 &E\sup_{0\le t\le T}\|P(t,\cdot)\|_{C^2(B_R)}^2
 +E\int_0^T\left(
   \|\mathfrak d_tP(t,\cdot)\|_{C(B_R)}^2
   +\|\mathfrak d_\omega P(t,\cdot)\|_{C^1(B_R;\bR^m)}^2
                         \right)dt\\
 &\le C_R\left(E\sup_{0\le t\le T}|B_t|^2
                     +E\int_0^T|Z_t^B|^2\,dt\right)<\infty,
\end{align*}
where the constant $C_R$   depends only on $R,a,c_1,c_2,T$.
Thus, we have $P\in\mathscr C^2_\sF$, and  
\begin{align}\label{eq:polynomial-barrier}
 \mathscr L^vP
 ={}&-\big[(c_1+1)B+c_2|Z^B|\big]w_a
    +B\big(\tfrac12\sigma\sigma':D^2w_a
                 +\beta\cdot Dw_a\big)
    +(\sigma'Dw_a)\cdot Z^B\notag\\
 \le{}&-Bw_a\le0,\qquad v\in U.
\end{align}

\begin{thm}[Maximality under $(\cA1)$]\label{thm-max}
Assume $(\cA1)$. If $u\in\cS^2(C_{\mathrm{loc}}(\bR^d))$ is a
viscosity subsolution of \eqref{SHJB} and
$u^+\in\cS^2_{\mathrm{pol}}$, then a.s.,
\[
 u(t,x)\le V(t,x)\qquad\text{  for }
       (t,x)\in[0,T]\times\bR^d.
\]
In particular, $V$ is the maximal viscosity subsolution in
$\cS^2_{\mathrm{pol}}$.
\end{thm}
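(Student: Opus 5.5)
The plan is to compare $u$ with upper comparison fields on spatial balls, using Lemma~\ref{lem-cmp-polynomial-local}(i), and then pass through the approximations of Lemma~\ref{lem-poly-approximation}. The polynomial barrier $P=B_tw_a$ constructed above supplies the collar condition.

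Fix a deterministic $(t_0,x_0)$ with $t_0<T$, a control $\theta\in\cU$, and parameters $N$ large and $h,\delta\in(0,1]$. Apply Lemma~\ref{lem-upper-comparison-fields} to the bounded data $(\beta_N,\sigma_N,f_{N,h},G_{N,h})$. This gives $\Phi_\delta\in\mathscr C^2_\sF$ with the following properties:
\[
\mathscr L^{\theta_s}\Phi_\delta+f_{N,h}(s,x,\theta_s)\le0,\qquad \Phi_\delta(T,\cdot)\ge G_{N,h},\qquad \Phi_\delta(t_0,x_0)=J_{N,h}(t_0,x_0;\theta)+C_{t_0}\tfrac{\delta}{1+\delta}.
\]
Here $\mathscr L^{\theta_s}$ is formed with $(\beta_N,\sigma_N)$. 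On $B_N$ these coefficients coincide with $(\beta,\sigma)$, so only $f$ and $G$ differ there. To absorb the cost error and obtain a supersolution of the original equation on a ball $D=\overline B_{R}$ with $R<N$, set
\[
\phi=\Phi_\delta+\eta_{N,h}+\epsilon P,\qquad \epsilon>0 .
\]
The term $\eta_{N,h}$ from \eqref{eq-poly-eta-convergence} has drift $-a_{N,h}$, so $\Phi_\delta+\eta_{N,h}$ satisfies $\mathscr L^{\theta_s}(\cdot)+f(s,x,\theta_s)\le0$ on $B_N$. Adding $\epsilon P$ keeps this sign by \eqref{eq:polynomial-barrier}. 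Since $\bH\le\mathscr L^{\theta_s}$-type expressions with $v=\theta_s$, we get $R_\phi\ge0$ on $D$, and the terminal inequality $\phi(T,\cdot)\ge G$ holds on $D$.

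The collar condition comes from the growth of $P$. We have $\Phi_\delta\ge -B_*(N,h)$, which is deterministic, and $u\le B_tw_r$ by the choice of $\Gamma$. Since $a>r$, the inequality $\epsilon B_tw_a(x)\ge B_tw_r(x)+B_*$ holds whenever $|x|\ge r_0(\epsilon,N,h)$ for a deterministic $r_0$, using $B_t\ge1$. Taking $R>r_0$ and $N>R$ gives $u\le\phi$ on the collar $r_0\le|x|\le R$. Lemma~\ref{lem-cmp-polynomial-local}(i) then yields $u\le\phi$ on $[0,T]\times D$. Evaluating at $(t_0,x_0)$ gives
\[
u(t_0,x_0)\le J_{N,h}(t_0,x_0;\theta)+C_{t_0}\delta+\eta_{N,h}(t_0)+\epsilon B_{t_0}w_a(x_0).
\]

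Next, let $\delta\downarrow0$. Then use \eqref{eq-poly-cost-mollification} to replace $J_{N,h}$ by $J_N$ at the cost of another $\eta_{N,h}$, and let $h\downarrow0$; this works along a subsequence a.s., since $\eta_{N,h}\to0$ in $\cS^2$. Now $r_0$ depends on $h$, but $N$ and $R$ can be re-chosen for each $h$ before the limit, so the order of limits is $\delta$, then $R$ and $N$ large, then $h$. It is cleaner to first fix $h$ and use \eqref{eq-poly-exit-cost} to pass from $J_{N,h}$ to $J_h$-type costs; I expect this bookkeeping of limits to be the main obstacle.

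The resulting bound is $u(t_0,x_0)\le J(t_0,x_0;\theta)+\epsilon B_{t_0}w_a(x_0)$ a.s. The error from $N\to\infty$ vanishes by \eqref{eq-poly-exit-cost}. Letting $\epsilon\downarrow0$ gives $u(t_0,x_0)\le J(t_0,x_0;\theta)$. Taking the essential infimum over $\theta$ gives $u(t_0,x_0)\le V(t_0,x_0)$. This holds at countably many deterministic points, and continuity of $u$ and $V$ extends it to all of $[0,T)\times\bR^d$; at $t=T$ the inequality is the terminal condition $u(T,\cdot)\le G=V(T,\cdot)$. Maximality in $\cS^2_{\mathrm{pol}}$ then follows from Theorem~\ref{thm-existence}, together with $V\in\cS^2_{\mathrm{pol}}$ by \eqref{eq-poly-value-bound}.
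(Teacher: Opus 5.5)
\textbf{Verdict: there is a genuine gap, in the collar step.} Your architecture matches the paper's: the upper field of Lemma~\ref{lem-upper-comparison-fields} for $(\beta_N,\sigma_N,f_{N,h},G_{N,h})$, plus $\eta_{N,h}$ and $\varepsilon P$, then Lemma~\ref{lem-cmp-polynomial-local}(i) and the limits. But the collar step does not close as you set it up, and this is exactly why the order of limits you leave open cannot be resolved.

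\textbf{Where the collar step breaks.} You bound $\Phi_\delta$ from below by the constant $B_*$ of Lemma~\ref{lem-upper-comparison-fields}. For the data $(\beta_N,\sigma_N,f_{N,h},G_{N,h})$ that constant is $L_*(T+1)$, and $L_*$ also dominates every spatial Lipschitz constant. This includes the local constant $L_N$ of $\beta,\sigma$ on $\overline B_N$, whose growth in $N$ is unrestricted under $(\cA1)$. It also includes the Lipschitz constants of $f_{N,h}$ and $G_{N,h}$, which can be of order $\varpi(N)^{p_0}/h$ when $f$ or $G$ is merely continuous.
\begin{itemize}
\item Your collar radius must satisfy $\varepsilon\varpi(r_0)^a\ge\varpi(r_0)^r+B_*(N,h)$. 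The requirement $r_0<R<N$ is therefore circular and may fail for every $N$.
\item For fixed $N$ it generally fails as $h\downarrow0$.
\item Re-choosing $N$ after $h$ does not help. The convergence $\eta_{N,h}\to0$ is only available for fixed $N$, because $a_{N,h}$ and $b_{N,h}$ involve the moduli of $f,G$ on $\overline B_N$.
\item Your ``$J_h$-type'' detour is not available either. The estimate \eqref{eq-poly-exit-cost} is stated only for unmollified data, and the detour would meet the same $L_N$ obstruction as $N\uparrow\infty$.
\end{itemize}

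\textbf{The missing idea.} Only a lower bound on $\Phi_\delta$ is needed, and it comes solely from the BSDE component $Y$ in \eqref{eq:max-upper-field}, because both penalty terms there are nonnegative. $Y$ is a conditional expectation of costs bounded by $C\varpi(N)^{p_0}$. Hence $\Phi_{N,h,\delta}\ge-\Lambda_N$ with $\Lambda_N=C_0\varpi(N)^{p_0}$, where $C_0$ is independent of $h$, of $\delta$ and of every Lipschitz constant.

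Then use $a>p_0$; your argument only uses $a>r$. Place the collar at $N/3\le|x|\le N/2$ inside $\overline B_{N/2}$. There $(w_r+\Lambda_N)/w_a\to0$ uniformly as $N\to\infty$, so one deterministic threshold $N_0(\varepsilon)$ works for all $N\ge N_0$, all $h,\delta$ and all $\omega$.

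This decoupling is what makes the order of limits legitimate:
\begin{enumerate}
\item let $\delta\downarrow0$;
\item with $N$ fixed, let $h\downarrow0$ along an a.s.\ convergent subsequence of $\eta_{N,h}\to0$;
\item let $N\uparrow\infty$ using \eqref{eq-poly-exit-cost};
\item finally let $\varepsilon\downarrow0$.
\end{enumerate}
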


\begin{proof}
Fix a deterministic $(t_0,x_0)$ with $t_0<T$, a control
$\theta\in\cU$, and $\varepsilon>0$. We use the preceding barrier
for $u^+$.
 More precisely, choose $r\ge0$ such that
\begin{align*}
 \Gamma=\sup_{0\le t\le T,\,x\in\bR^d}
          \frac{u^+(t,x)}{\varpi(x)^r}\in L^2(\sF_T),
\end{align*}
fix $a>\max\{r,p_0\}$, and let $(B,Z^B)$ solve
\eqref{eq:polynomial-barrier-bsde}. Then
$P=Bw_a\in\mathscr C^2_\sF$ and
\[
 B_t\ge1,\qquad u^+(t,x)\le B_tw_r(x),\qquad
 \forall\,(t,x)\in[0,T]\times\bR^d,\quad\text{a.s.},
\]
while \eqref{eq:polynomial-barrier} gives
\begin{align*}
 \mathscr L^vP(t,x)\le-P(t,x),\qquad
 \forall\,(x,v)\in\bR^d\times U,\quad d\bP\otimes dt\text{-a.e.}
\end{align*}
Write $\mathscr L_N^v$ for the operator with
coefficients $\beta_N,\sigma_N$ from
Lemma~\ref{lem-poly-approximation}.
 That is, for $N\ge1$,
$\varphi\in\mathscr C^2_\sF$, and $v\in U$,
\begin{align*}
 \mathscr L_N^v\varphi(t,x)
 &=\mathfrak d_t\varphi(t,x)
       +D\varphi(t,x)\beta_N(t,x,v)
 +\tfrac12\operatorname{tr}\!\left(
       \sigma_N\sigma_N'(t,x,v)D^2\varphi(t,x)\right)
       \\
 &\quad\,
 +\operatorname{tr}\!\left(
       \sigma_N(t,x,v)D\mathfrak d_\omega\varphi(t,x)\right),
\end{align*}
where $\beta_N(t,x,v)=\beta(t,\pi_Nx,v)$ and
$\sigma_N(t,x,v)=\sigma(t,\pi_Nx,v)$, for
$(t,x)\in[0,T]\times\bR^d$.
For fixed $N,h$, apply Lemma~\ref{lem-upper-comparison-fields}
to the bounded data
$(\beta_N,\sigma_N,f_{N,h},G_{N,h})$, starting at $(t_0,x_0)$.
By Lemma~\ref{lem-upper-comparison-fields}, there exist deterministic constants 
$\Lambda_N$ and $C_{N,h,t_0}>0$,
independent of $0<\delta\le1$, and fields
$\Phi_{N,h,\delta}\in\mathscr C^2_\sF$ such that
\begin{align*}
  \mathscr L_N^{\theta_s}\Phi_{N,h,\delta}(s,x)
                       +f_{N,h}(s,x,\theta_s)&\le0,
       &&\forall\, x\in\bR^d,\quad d\bP\otimes ds\text{-a.e.},\\
 \Phi_{N,h,\delta}(T,x)&\ge G_{N,h}(x),
       &&\forall\,x\in\bR^d,\quad\text{a.s.},\\
 \Phi_{N,h,\delta}(s,x)&\ge-\Lambda_N,
       &&\forall\,(s,x)\in[0,T]\times\bR^d,\quad\text{a.s.},\\
 \Phi_{N,h,\delta}(t_0,x_0)&=J_{N,h}(t_0,x_0;\theta)
                     + C_{N,h,t_0} \frac{\delta}{1+\delta}.
\end{align*}
Here, in view of the construction of $\Phi_{N,h,\delta}$ in \eqref{eq:max-upper-field}, 
the constant $\Lambda_N$ is from the lower bound of the scalar BSDE solution $Y$ in \eqref{eq:max-scalar-bsde} for the scalar cost, 
from which we may get
  \begin{equation}\label{eq:polynomial-upper-bound}
  \Lambda_N=C_0{ \varpi(N)}^{p_0}
\end{equation}
  with the constant $C_0$
    independent of $h,\delta$. 
    Indeed, projection and
convolution with the nonnegative unit-mass kernel $\rho_h$ give
\[
 \max\{|f_{N,h}(s,x,v)|,|G_{N,h}(x)|\}
       \le C{ \varpi(N)}^{p_0}.
\]
Taking conditional expectations in \eqref{eq:max-scalar-bsde}
  bounds the cost functional from below by
$-C(1+T){ \varpi(N)}^{p_0}$, which gives the asserted bound. 

Let $\eta_{N,h}$ be the nonnegative scalar correction in
Lemma~\ref{lem-poly-approximation}, with
$d\eta_{N,h}=-a_{N,h}\,dt+z_{N,h}\,dW$ and
$\eta_{N,h}(T)=b_{N,h}$. Put
\[
 \phi=\Phi_{N,h,\delta}+\eta_{N,h}+\varepsilon P,
 \qquad D_N=\overline B_{N/2}.
\]
On $D_N$ the original and projected dynamics agree, while
$f_N=f$ and $G_N=G$. Hence
\begin{equation}\label{eq:nonnegative-phi-residual}
 \mathscr L^{\theta_s}\phi+f(s,x,\theta_s)
 \le f(s,x,\theta_s)-f_{N,h}(s,x,\theta_s)-a_{N,h}(s)
                   +\varepsilon\mathscr L^{\theta_s}P\le0.
\end{equation}
  Likewise,
$\phi(T)\ge G_{N,h}+b_{N,h}\ge G$ on $D_N$.
Since $u\le Bw_r$, $B\ge1$, and \eqref{eq:polynomial-upper-bound}
holds, on the collar $N/3\le|x|\le N/2$ we have
\begin{equation}\label{eq:polynomial-collar}
 u-\phi\le B\big[w_r(x)+\Lambda_N-\varepsilon w_a(x)\big]<0
\end{equation}
for all sufficiently large deterministic $N$. Indeed, uniformly
for $N/3\le|x|\le N/2$,
\begin{align*}
 \frac{w_r(x)+\Lambda_N}{w_a(x)}
 \le { \varpi(N/3)}^{r-a}
      +C_0{ \varpi(N)}^{p_0}{ \varpi(N/3)}^{-a}
 \longrightarrow0, \text{ as }N\to\infty,
\end{align*}
since $a>\max\{r,p_0\}$. Thus the choice of $N$ is independent of
$\omega,h,\delta$. Also, enlarge $N$  so as to have $|x_0|<N/3$.
The residual of $\phi$ as defined in \eqref{eq:max-residual} is nonnegative by \eqref{eq:nonnegative-phi-residual}.
Lemma~\ref{lem-cmp-polynomial-local} therefore gives 
\[
 u(t_0,x_0)\le J_{N,h}(t_0,x_0;\theta)
       +{ C_{N,h,t_0}}\frac{\delta}{1+\delta}
       +\eta_{N,h}(t_0)+\varepsilon P(t_0,x_0),\quad \text{a.s.}
\]
First let $\delta\downarrow0$ for fixed $N,h$.
Using $|J_{N,h}-J_N|\le\eta_{N,h}$ gives
\[
 u(t_0,x_0)\le J_N(t_0,x_0;\theta)
       +2\eta_{N,h}(t_0)+\varepsilon P(t_0,x_0),\quad \text{a.s.}
\]
For fixed $N$, next let $h\downarrow0$ along an   a.s.
convergent subsequence of $\eta_{N,h}\to0$ in $\cS^2$ to obtain
\[
 u(t_0,x_0)\le J_N(t_0,x_0;\theta)+\varepsilon P(t_0,x_0),\quad \text{a.s.}
\]
Finally letting $N\uparrow\infty$ and using the cost
convergence in Lemma~\ref{lem-poly-approximation}, we obtain
\[
 u(t_0,x_0)\le J(t_0,x_0;\theta)+\varepsilon P(t_0,x_0),\quad \text{a.s.}
\]
Letting $\varepsilon\downarrow0$ shows that $u(t_0,x_0)$ is an
essential lower bound for all the control costs, so it is at most
$V(t_0,x_0)$.  
The terminal inequality and joint continuity extend the result from a
countable dense set to all $(t,x)$ a.s.
\end{proof}

\begin{thm}[Uniqueness under $(\cA1)$]\label{thm-uniqueness-A1}
Assume $(\cA1)$. Every viscosity supersolution
$u\in\cS^2(C_{\mathrm{loc}}(\bR^d))$ with
$u^-\in\cS^2_{\mathrm{pol}}$ satisfies
\[
 u(t,x)\ge V(t,x)\qquad\text{a.s., simultaneously for }
        (t,x)\in[0,T]\times\bR^d.
\]
Consequently, the value function is the unique viscosity solution
of \eqref{SHJB} in $\cS^2_{\mathrm{pol}}$.
\end{thm}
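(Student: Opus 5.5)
The argument mirrors Theorem~\ref{thm-max}, but uses the lower comparison fields of Lemma~\ref{lem-uniq-A1-regularize} in place of the upper fields. Fix a deterministic $(t_0,x_0)$ with $t_0<T$ and $\varepsilon>0$. Use the barrier built before Theorem~\ref{thm-max}, now from the negative part:
\[
 \Gamma=\sup_{t,x}\frac{u^-(t,x)}{\varpi(x)^r}\in L^2(\sF_T),\qquad P=Bw_a ,
\]
with $a>\max\{r,p_0\}$. Then $u^-\le Bw_r$, $B\ge1$, and $\mathscr L^vP\le-P$ for every $v\in U$.

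For fixed $N,h$, apply Lemmas~\ref{lem-uniq-A1-weak}--\ref{lem-uniq-A1-regularize} to the bounded data $(\beta_N,\sigma_N,f_{N,h},G_{N,h})$ of Lemma~\ref{lem-poly-approximation}, on the ball $D_N=\overline B_{N/2}$ with a neighborhood $O$. This gives fields $q_n=q_{n}^{N,h,\delta}$ with the following properties:
\begin{itemize}
\item $q_n\le q^\delta$ on $D_N$;
\item the lower differential inequality \eqref{eq:uniq-A1-qn} holds for all $v$, i.e.\ $R_{q_n}\le0$ for the projected mollified Hamiltonian;
\item $q_n(T)\le G_{N,h}*\rho_\delta$.
\end{itemize}
Now set
\[
 \phi=q_n-\eta_{N,h}-C\delta-\varepsilon P .
\]
The subtracted terms do the following:
\begin{itemize}
\item $\eta_{N,h}$ is the scalar correction of Lemma~\ref{lem-poly-approximation}. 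It absorbs $f-f_{N,h}$ in the residual and $G-G_{N,h}$ at time $T$.
\item $C\delta$, with $C$ the Lipschitz constant of $G_{N,h}$, absorbs $G_{N,h}*\rho_\delta-G_{N,h}$.
\item $-\varepsilon P$ contributes $+\varepsilon P\ge0$ to the residual, so it helps.
\end{itemize}
The projected and original dynamics coincide on $D_N$. Hence $R_\phi\le0$ on $D_N$ $d\bP\otimes dt$-a.e., and $\phi(T)\le G$ on $D_N$. All scalar additions have zero spatial derivatives, so $\phi$ satisfies the hypotheses of Lemma~\ref{lem-cmp-polynomial-local}.

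For the collar $N/3\le|x|\le N/2$, we need $\phi-u<0$. We have $|q_n|\le |q^\delta|+|d_n|+|\eta_n|$, and $q^\delta$ is bounded by $C_0\varpi(N)^{p_0}$ uniformly in $h,\delta$. However, $q_n-q^\delta=d_n-\eta_n\le0$, so $q_n\le q^\delta\le C_0\varpi(N)^{p_0}$. Therefore only an upper bound on $q_n$ is needed, and it is uniform in $n$. Using $-u\le Bw_r$, this gives on the collar
\[
 \phi-u\le C_0\varpi(N)^{p_0}+Bw_r-\varepsilon Bw_a<0
\]
for $N$ large, deterministically, since $a>\max\{r,p_0\}$ and $B\ge1$.

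Lemma~\ref{lem-cmp-polynomial-local}(ii) then yields $\phi(t_0,x_0)\le u(t_0,x_0)$. We pass to the limits in the following order:
\begin{itemize}
\item $n\to\infty$, using $q_n\to q^\delta$ in $\cS^2(C^2(D_N))$ and an a.s.\ subsequence;
\item $\delta\downarrow0$, using \eqref{eq:uniq-A1-shaking-stability}, $|q^\delta-V_{N,h}|\le C_{N,h}\delta$;
\item $h\downarrow0$, using $|V_{N,h}-V_N|\le\eta_{N,h}\to0$;
\item $N\to\infty$, using \eqref{eq-poly-exit-cost};
\item $\varepsilon\downarrow0$.
\end{itemize}
This gives $V(t_0,x_0)\le u(t_0,x_0)$ a.s. Continuity of both fields and a countable dense set of $(t_0,x_0)$ extend this to all points simultaneously; at $t=T$ it is the terminal inequality.

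\textbf{Uniqueness.} A viscosity solution $u\in\cS^2_{\mathrm{pol}}$ has $u^\pm\in\cS^2_{\mathrm{pol}}$. Theorem~\ref{thm-max} gives $u\le V$, the above gives $u\ge V$, and Theorem~\ref{thm-existence} with \eqref{eq-poly-value-bound} shows that $V$ is such a solution.

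The main obstacle is bookkeeping rather than new ideas. One must check that the constant $C\delta$ and the Lipschitz constants, which depend on $N,h$, are sent to zero before $h$ and $N$ move. One must also confirm that the collar estimate needs only the one-sided bound $q_n\le q^\delta$, so that the choice of $N$ is independent of $n,\delta,h,\omega$.
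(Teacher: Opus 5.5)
Your proposal is correct and follows the paper's proof essentially step for step. The common elements are:
\begin{itemize}
\item the barrier $P=Bw_a$ built from $u^-$;
\item the test field $q_{N,h,\delta,n}-\ell_{N,h}\delta-\eta_{N,h}-\varepsilon P$, with the collar handled by the one-sided bound $q_{N,h,\delta,n}\le q_{N,h,\delta}\le\Lambda_N$;
\item Lemma~\ref{lem-cmp-polynomial-local}(ii);
\item the same order of limits, $n\to\infty$, $\delta\downarrow0$, $h\downarrow0$, $N\uparrow\infty$, $\varepsilon\downarrow0$.
\end{itemize}
One wording slip does not affect the argument: $-\varepsilon P$ adds $-\varepsilon\mathscr L^vP\ge\varepsilon P$ to $\mathscr L^v\phi+f$, so it lowers $R_\phi$ rather than adding $+\varepsilon P$ to the residual.
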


\begin{proof} 
Fix deterministic $(t_0,x_0)\in[0,T)\times \bR^d$ and $\varepsilon>0$, and construct $P$ using the
polynomial bound on $u^-$. 
 More precisely, choose $r\ge0$ such that
\[
 \Gamma=\sup_{0\le t\le T,\,x\in\bR^d}
           \frac{u^-(t,x)}{\varpi(x)^r}\in L^2(\sF_T),
\]
fix $a>\max\{r,p_0\}$, and let $(B,Z^B)$ solve
\eqref{eq:polynomial-barrier-bsde} with this $\Gamma$.
Set $P(t,x)=B_tw_a(x)$. The preceding construction gives
$P\in\mathscr C^2_\sF$ with
\begin{equation*}
 B_t\ge1,\qquad u^-(t,x)\le B_tw_r(x),\qquad
 u(t,x)\ge-B_tw_r(x),
\end{equation*}
  for all $(t,x)\in[0,T]\times\bR^d$ a.s.
Moreover, $P\ge0$ and \eqref{eq:polynomial-barrier} gives
\[
 \mathscr L^vP(t,x)\le-P(t,x),\qquad
 \forall\,(x,v)\in\bR^d\times U,\quad d\bP\otimes dt\text{-a.e.}
\]
The choices of $r,a$, and hence $P$, are fixed independently of
$\varepsilon,N,h,\delta,n$ where the parameters $N,h,\delta,n$ 
are introduced and used below.

For fixed integer $N>0$, and real $h\in (0,1)$,
 let $0<\delta\le1$ and denote by
$w_{N,h}^{\delta}$ the shaken value in
Lemma~\ref{lem-uniq-A1-weak} for the data
$(\beta_N,\sigma_N,f_{N,h},G_{N,h})$. Define
\[
 q_{N,h,\delta}(t,x)
 =\int_{\bR^d}\rho_\delta(y)w_{N,h}^{\delta}(t,x-y)\,dy,
 \qquad (t,x)\in[0,T]\times\bR^d.
\]
Here $\rho_\delta\ge0$, $\int\rho_\delta=1$, and
$\operatorname{supp}\rho_\delta\Subset B_{\delta/4}$, as in
\eqref{eq:uniq-A1-q}. To specify the spatial domains, set
\begin{align*}
 D_N=\overline B_{N/2},\qquad
 O_{N,\delta}=B_{N/2+\delta/4},\qquad
 D_{1,N,\delta}=B_{N/2+\delta/2}.
\end{align*}   
The convolution kernels are supported in $D_{1,N,\delta}$ because
$\overline{O_{N,\delta}}-\operatorname{supp}\rho_\delta
 \Subset D_{1,N,\delta}$.
Apply Lemma~\ref{lem-uniq-A1-weak} on $D_{1,N,\delta}$,
Lemma~\ref{lem-uniq-A1-q} with $x_*=0$ and $R=N/2$, and then
Lemma~\ref{lem-uniq-A1-regularize} on $D_N$ with neighborhood
$O_{N,\delta}$. The resulting fields $q_{N,h,\delta,n}$ satisfy,
for every open ball $B\Subset O_{N,\delta}$,
\begin{align*}
 q_{N,h,\delta,n}
 &\in\cS^2(C(\overline B))\cap\cL^2(C^2(\overline B)), \quad
 \mathfrak d_tq_{N,h,\delta,n}
 \in\cL^2(C(\overline B)),\qquad
 \mathfrak d_\omega q_{N,h,\delta,n}
 \in\cL^2(C^1(\overline B;\bR^m)),
\end{align*}
and the stochastic equation in assertion (ii) of
Lemma~\ref{lem-uniq-A1-regularize} on $[0,T]\times O_{N,\delta}$.
They satisfy
\begin{align}\label{eq:polynomial-lower-fields}
 \mathscr L_N^v q_{N,h,\delta,n} (t,x)+f_{N,h}(t,x,v)&\ge0,
                  \quad\forall\,(x,v)\in D_N\times U,
                     \quad d\bP\otimes dt\text{-a.e.},\notag\\
 q_{N,h,\delta,n}(t,x)&\le q_{N,h,\delta}(t,x)\le \Lambda_N, \quad
  \forall\,(t,x)\in[0,T]\times D_N,\quad\text{a.s.},
 \notag\\
 q_{N,h,\delta,n}(T,x)&\le(G_{N,h}*\rho_\delta)(x), \quad \forall\, x\in D_N,\quad a.s.
\end{align}
 We spell out the bounds.
The global cost bounds
\begin{align*}
 |f_{N,h}(s,z,v)|\le C\varpi(N)^{p_0},\qquad
 |G_{N,h}(z)|\le C\varpi(N)^{p_0},
\end{align*}
remain valid at the shaken arguments. Taking conditional expectations
and then the essential infimum in the definition of $w_{N,h}^{\delta}$
therefore gives
\[
 |w_{N,h}^{\delta}(t,x)|
 \le C(1+T-t)\varpi(N)^{p_0}.
\]
Consequently, enlarging the deterministic $C_0$ to satisfy
$C_0\ge C(1+T)$ if necessary, we obtain
\begin{align*}
 |q_{N,h,\delta}(t,x)|
 &\le\int\rho_\delta(y)|w_{N,h}^{\delta}(t,x-y)|\,dy
 \le C(1+T)\varpi(N)^{p_0}
 \le\Lambda_N:=C_0\varpi(N)^{p_0}.
\end{align*}
These bounds hold  for all $(t,x)\in[0,T]\times\bR^d$
a.s., by continuity. Here $C_0$ is independent of $N,h,\delta,n$.
  
 For fixed $N,h,\delta$,
Lemma~\ref{lem-uniq-A1-regularize}(iii) gives
$q_{N,h,\delta,n}\le q_{N,h,\delta}$ on $[0,T]\times D_N$ a.s.
and the convergence as $n\to\infty$ below. Together with
$q_{N,h,\delta}(T,\cdot)=G_{N,h}*\rho_\delta$, this order also gives
the terminal inequality in \eqref{eq:polynomial-lower-fields}.
The convergence as $\delta\downarrow0$ follows from
\eqref{eq:uniq-A1-shaking-stability}, applied to the fixed bounded
data $(\beta_N,\sigma_N,f_{N,h},G_{N,h})$; its constant may depend
on $N,h$.
Consequently, for fixed $N,h$,
\begin{align*}
 q_{N,h,\delta,n}&\longrightarrow q_{N,h,\delta}
       &&\text{in }\cS^2(C^2(D_N))\quad(n\to\infty),\\
 q_{N,h,\delta}&\longrightarrow V_{N,h}
       && \text{in }\cS^2(C(D_N)) \quad(\delta\downarrow0).
\end{align*}
Only these approximating values require the bounded-data spatial
Lipschitz estimates.

Choose a deterministic spatial Lipschitz constant $\ell_{N,h}$ of
$G_{N,h}$ and put
\[
 \phi=q_{N,h,\delta,n}-\ell_{N,h}\delta
                         -\eta_{N,h}-\varepsilon P.
\]
Since $\rho_\delta$ is nonnegative, has integral one, and is supported
in $B_\delta$,
\[
 (G_{N,h}*\rho_\delta)(x)-G_{N,h}(x)
 \le\ell_{N,h}\int |y|\rho_\delta(y)\,dy
 \le\ell_{N,h}\delta.
\]
Using $\eta_{N,h}(T)=b_{N,h}$ and $P\ge0$, the terminal estimate
in \eqref{eq:polynomial-lower-fields} therefore gives
\begin{align*}
 \phi(T,x)\le G_{N,h}(x)-b_{N,h}\le G_N(x)=G(x),
 \qquad \forall\, x\in D_N,\quad \text{a.s.}
\end{align*}
Furthermore, $d\bP\otimes dt$-a.e., on $D_N$, for every $v\in U$,
\[
 \mathscr L^v\phi+f(t,x,v)
 \ge f(t,x,v)-f_{N,h}(t,x,v)+a_{N,h}(t)
                              -\varepsilon\mathscr L^vP\ge0.
\]
All these inequalities hold on common null sets, as required by
the preceding lemmas. Since $u\ge-Bw_r$, the nonnegative
corrections and \eqref{eq:polynomial-lower-fields} yield a.s.
\[
 \phi-u\le B\big[\Lambda_N+w_r(x)-\varepsilon w_a(x)\big]<0
 \quad\text{for }N/3\le|x|\le N/2, \quad \forall\,t\in[0,T],
\]
for every sufficiently large deterministic $N$, independently of
$h,\delta,n$. Also choosse $N$ to be so large that $|x_0|<N/3$.
We have $R_\phi\le0$ because the differential inequality holds
for every control value. Lemma~\ref{lem-cmp-polynomial-local}
therefore gives
\[
 u(t_0,x_0)\ge q_{N,h,\delta,n}(t_0,x_0)
       -\ell_{N,h}\delta-\eta_{N,h}(t_0)
       -\varepsilon P(t_0,x_0),\quad \text{a.s.}
\]
 Let $n\to\infty$ along an almost surely
convergent subsequence of the stated $\cS^2$ convergence, keeping
$N,h,\delta$ fixed. Next, for fixed $N,h$, choose a deterministic
sequence $\delta_k\downarrow0$ along which
$q_{N,h,\delta_k}\to V_{N,h}$ almost surely in $C([0,T]\times D_N)$.
Such a sequence exists by the stated $\cS^2$ convergence.
Intersecting the probability-one events for these countably many
$\delta_k$, and then letting $k\to\infty$ with
$\ell_{N,h}\delta_k\to0$, gives
\[
 u(t_0,x_0)\ge V_{N,h}(t_0,x_0)-\eta_{N,h}(t_0)
                                      -\varepsilon P(t_0,x_0),\quad \text{a.s.}
\]
Using $|V_{N,h}-V_N|\le\eta_{N,h}$ gives
\[
 u(t_0,x_0)\ge V_N(t_0,x_0)-2\eta_{N,h}(t_0)
                                      -\varepsilon P(t_0,x_0), \quad \text{a.s.}
\]
For fixed $N$, leting $h\downarrow0$ along an   a.s. convergent
subsequence of $\eta_{N,h}\to0$ in $\cS^2(\bR)$ yields
\begin{align*}
 u(t_0,x_0)\ge V_N(t_0,x_0)-\varepsilon P(t_0,x_0),\quad \text{a.s.}
\end{align*}
Letting $N\uparrow\infty$ using
Lemma~\ref{lem-poly-approximation}, and finally letting
$\varepsilon\downarrow0$, we conclude that 
$u(t_0,x_0)\ge V(t_0,x_0)$ a.s. Continuity and a countable dense set
give the desired comparison. For a viscosity solution in
$\cS^2_{\mathrm{pol}}$, Theorem~\ref{thm-max} gives the reverse
inequality. Theorem~\ref{thm-existence} and the polynomial value
bound place $V$ in this class, proving the existence and uniqueness.
\end{proof}
}

\begin{appendix}
\section{Measurable selection theorem}
The following measurable selection theorem is referred to \cite{wagner1977survey}.
\begin{thm}\label{thm-MS}
Let $(\Lambda,\mathscr M)$ be a measurable space equipped with a nonnegative measure $\mu$ and let $(\cO,\cB(\cO))$ be a Polish space. Suppose $F$ is a set-valued function from $\Lambda$ to $\cB(\cO)$ satisfying: (i) for $\mu$-a.e. $\lambda \in\Lambda$, $F(\lambda)$ is a closed nonempty subset of $\cO$; (ii) for any open set $O\subset \cO$, $\{\lambda:\, F(\lambda)\cap O\neq \emptyset\}\in \mathscr M$. Then there exists a  measurable function $f$: $(\Lambda,\mathscr M)\rightarrow (\cO,\cB(\cO))$ such that for $\mu$-a.e. $\lambda\in\Lambda$, $f(\lambda)\in F(\lambda)$. 
\end{thm}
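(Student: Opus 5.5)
We plan to follow the classical Kuratowski--Ryll-Nardzewski argument. Two things are needed: a reduction to the case where (i) holds everywhere, and then successive approximation through a countable dense subset of $\cO$.

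\textbf{Reduction.} Since (i) holds $\mu$-a.e., there is a set $N\in\mathscr M$ with $\mu(N)=0$ outside of which $F(\lambda)$ is closed and nonempty. Fix $o_*\in\cO$ and define $\widetilde F=F$ on $N^c$ and $\widetilde F=\{o_*\}$ on $N$. For every open $O$,
\[
\{\widetilde F\cap O\neq\emptyset\}=\bigl(\{F\cap O\neq\emptyset\}\cap N^c\bigr)\cup\bigl(N\cap\{o_*\in O\}\bigr),
\]
and this set belongs to $\mathscr M$. So (ii) persists. A measurable selector of $\widetilde F$ is a $\mu$-a.e. selector of $F$. From now on we assume (i) holds for every $\lambda$.

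\textbf{Successive approximation.} Fix a complete metric $d$ on $\cO$ and a dense sequence $(o_k)_{k\ge1}$, and write $B(o,s)$ for open balls. We build countably-valued measurable maps $f_n:\Lambda\to\{o_k\}$ such that, for every $\lambda$,
\[
F(\lambda)\cap B(f_n(\lambda),2^{-n})\neq\emptyset
\qquad\text{and}\qquad
d(f_n,f_{n+1})<2^{-n}+2^{-n-1}.
\]
For $n=0$, let $f_0(\lambda)=o_k$, where $k$ is the least index with $F(\lambda)\cap B(o_k,1)\neq\emptyset$. Such $k$ exists because $F(\lambda)\neq\emptyset$ and $(o_k)$ is dense.

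For the inductive step, on the event $\{f_n=o_j\}$ set $f_{n+1}=o_k$, where $k$ is the least index with
\[
F\cap B(o_j,2^{-n})\cap B(o_k,2^{-n-1})\neq\emptyset .
\]
Such $k$ exists: choose $y\in F\cap B(o_j,2^{-n})$ and then $o_k$ with $d(o_k,y)<2^{-n-1}$. The triangle inequality through $y$ gives $d(o_j,o_k)<2^{-n}+2^{-n-1}$.

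\textbf{Measurability of the $f_n$.} This is the step that needs care, and it reduces directly to (ii). Each event $\{f_{n+1}=o_k\}$ is built by finitely or countably many set operations from $\{f_n=o_j\}$ and from sets of the form $\{F\cap O\neq\emptyset\}$ with $O$ a finite intersection of open balls. Such $O$ is open, so these sets lie in $\mathscr M$ by (ii), and induction gives measurability of every $f_n$.

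\textbf{Passage to the limit.} The sequence $(f_n(\lambda))$ is Cauchy, because $\sum_n 3\cdot 2^{-n-1}<\infty$. By completeness it converges to some $f(\lambda)$, and $f$ is measurable as a pointwise limit of measurable maps into a metric space. From $d(f_n,F)<2^{-n}$ we get $d(f(\lambda),F(\lambda))=0$. Since $F(\lambda)$ is closed, $f(\lambda)\in F(\lambda)$, which is the required selector.
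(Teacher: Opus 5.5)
Your argument is correct. The paper gives no proof of its own and only cites Wagner's survey; what you wrote is the standard Kuratowski--Ryll-Nardzewski successive-approximation proof of that cited result, and your modification of $F$ on the null set $N$ correctly reduces the $\mu$-a.e.\ hypothesis to the everywhere case while preserving (ii). One wording fix: the least index $k$ actually selected need not be the one produced from your chosen $y$, so obtain the bound $d(o_j,o_k)<2^{-n}+2^{-n-1}$ through any point of the nonempty set $F\cap B(o_j,2^{-n})\cap B(o_k,2^{-n-1})$.
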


{
In the proof of Theorem \ref{thm-existence}, the selector $a(s,x)$ is
constructed from a fixed countable dense subset $\{v_j:j\geq1\}$ of $U$.
With $Q_\phi(s,x,v)=\mathscr L^v\phi(s,x)+f(s,x,v)$, one takes the
first $v_j$ satisfying
\[
Q_\phi(s,x,v_j)\leq\inf_{i\geq1}Q_\phi(s,x,v_i)+\eps.
\]
The countable infimum and the first-index selection are
$\sP\otimes\cB(\bR^d)$-measurable. Continuity in $v$ identifies this
infimum with the infimum over $U$; on the common exceptional set one
assigns the fixed value $v_1$. Evaluating $a(s,\cdot)$ at the lagged
state $X^n_{t_k}$ on each interval $(t_k,t_{k+1}]$ gives the
progressively measurable controls $\theta^n$ used in that proof.
}

{
In the proof of Lemma~\ref{lem-cmp-polynomial-local}, apply the
selection theorem directly at the stopping time $\tau$, with
$(\Lambda,\mathscr M,\mu)=(A,\sF_\tau|_A,\bP|_A)$ and
$\cO=D$. The gap $(u-\phi)(\tau,x)$, or $(\phi-u)(\tau,x)$
for the supersolution assertion, is measurable in $\omega$ and
continuous in $x$. Its maximum is attained on the compact ball $D$.
The argmax multifunction is nonempty and closed; its measurability
follows using countable dense subsets of compact subsets of $D$.
Theorem~\ref{thm-MS} therefore gives an $\sF_\tau$-measurable
maximizer $\xi$ on $A$. Its positive gap and the collar condition
place it in $B_{r_0}$; extend it by zero off $A$.
}

\section{Proof of Proposition~\ref{prop-classical-viscosity}}
\begin{proof}[Proof of Proposition~\ref{prop-classical-viscosity}]
The terminal condition and the required local $\cS^2$ regularity
are part of Definition~\ref{defn-classical}. For $F=u$ or a test
field $F=\phi$, write
\begin{align}
 Q_F(s,x,v)={}&\mathfrak d_sF(s,x)
       +\beta(s,x,v)\cdot DF(s,x)+f(s,x,v)
 +\tfrac12\operatorname{tr}\!\left[
       \sigma\sigma'(s,x,v)D^2F(s,x)\right]
        \nonumber\\
  &
 +\operatorname{tr}\!\left[
       \sigma(s,x,v)D\mathfrak d_\omega F(s,x)\right]{.}
\end{align}
Equation~\eqref{eq:classical-equation} says that $d\bP\otimes ds$-a.e.,
\begin{equation}\label{eq:classical-Q-zero}
 \inf_{v\in U}Q_u(s,x,v)=0,
 \qquad Q_u(s,x,v)\ge0\quad(v\in U), \quad\forall x\in\bR^d.
\end{equation}
 
We use the exclusion formulation in
Definition~\ref{defn-viscosity}. Suppose first that an upper-contact
test $\phi$ violates the subsolution condition. On an event
$A\in\sF_\tau$ of positive probability, the contact holds until a
stopping time $\hat\tau>\tau$, while, for some $r,\eps>0$ and a
stopping time $\rho>\tau$ on $A$,
\[
 R_\phi(s,x)\ge2\eps
 \quad\text{for a.e. }s\in(\tau,\rho),\quad x\in B_r(\xi).
\]
Reduce $A$ so that $|\xi|\le N$ and $\tau\le T-b_0$ there,
for deterministic $N,b_0>0$, and choose $r$ smaller than the
contact radius. Extend $\xi$ by zero outside $A$; all contact and
residual inequalities below are used on $A$. For any control $\theta$, start
$X^\theta=X^{\tau,\xi;\theta}$ and stop it at
\[
 \eta_h^\theta=(\tau+h)\wedge T\wedge\hat\tau\wedge\rho
       \wedge\inf\{s\ge\tau:|X_s^\theta-\xi|\ge r/2\}.
\]
The fourth-moment increment bound in Lemma~\ref{lem-SDE}, Chebyshev's inequality, and
$\hat\tau\wedge\rho>\tau$ on $A$ allow one fixed
$0<h<b_0\wedge1$ and $m>0$ such that
\begin{equation}\label{eq:classical-duration}
 \inf_{\theta\in\cU}E[1_A(\eta_h^\theta-\tau)]\ge m.
\end{equation}
This is the estimate proved in \eqref{eq-existence-duration}; its
constants depend on $N,r,A$, but not on $\theta$.

Put $F=\phi-u$. On $A$, the stopped state stays in a deterministic ball.
Spatial cutoff and the random-initial-time version of
Lemma~\ref{lem-ito-wentzell} therefore apply to both $\phi$ and
$u$ using only their local test-function norms. The stochastic
integral multiplied by $1_A$ is square integrable after stopping. Since
$F(\tau,\xi)=0$ on $A$, for every $\theta$,
\begin{equation}\label{eq:classical-contact-Ito}
 E[1_AF(\eta_h^\theta,X_{\eta_h^\theta}^\theta)]
 =E\left[1_A\int_\tau^{\eta_h^\theta}
       (Q_\phi-Q_u)(s,X_s^\theta,\theta_s)\,ds\right].
\end{equation}
The random-start formula follows by the finite-valued stopping-time
approximation used in the proof of Theorem~\ref{thm-existence}.
At upper contact the left side is nonnegative. By
\eqref{eq:classical-Q-zero}, it follows that
\begin{equation}\label{eq:classical-upper-bound}
 0\le E\left[1_A\int_\tau^{\eta_h^\theta}
                      Q_\phi(s,X_s^\theta,\theta_s)\,ds\right]
 \quad\text{for every }\theta.
\end{equation}

Here is the control-selection estimate used for both contact
directions. For either $Q_\phi$ or $Q_u$, choose a fixed countable
dense set $\{v_j\}\subset U$ and, at each $(s,x)$, take the first
$v_j$ whose value of $Q$ is within $\eps$ of $\inf_{v\in U}Q$.
{Continuity in $v$ and separability of $U$ make this a
predictable selector $a_Q(s,x)$; set $a_Q=v_1$ on the common
exceptional set. For $n\ge1$, put $t_k=(\tau+kh/n)\wedge T$,
$0\le k\le n$, and construct $\theta^n$ and
$X^n=X^{\tau,\xi;\theta^n}$ successively by
\[
 \theta_s^n=a_Q(s,X^n_{t_k}),\qquad Y_s^n=X^n_{t_k},
 \qquad t_k<s\le t_{k+1},\quad 0\le k<n.
\]
Set $Y_\tau^n=\xi$ and use $v_1$ for the control outside
$(\tau,t_n]$. At each step, $X^n_{t_k}$ is
$\sF_{t_k}$-measurable, so the prescribed control is predictable
and Lemma~\ref{lem-SDE}(i) gives the state on the next interval.
For $\Delta_n:=\sup_{\tau\le s\le(\tau+h)\wedge T}|X_s^n-Y_s^n|$,
the fourth-moment estimates in Lemma~\ref{lem-SDE}(ii) give
\[
 E\Delta_n^4
 \le C(h/n)^2\sum_{k=0}^{n-1}E\varpi(X^n_{t_k})^4
 \le Ch^2/n\longrightarrow0, \quad \text{as }n\to\infty.
\]
Fix $D=\overline B_{N+r}$ strictly inside a larger deterministic
ball. On $A$, before $\eta_h^{\theta^n}$, both $X_s^n$ and
$Y_s^n$ lie in $B_{r/2}(\xi)\subset D$.}
Local test-function norms on the larger ball and
$(\cA1)$ give
\[
\begin{aligned}
 K_Q(s)&:=\sup_{\substack{x\in D\\v\in U}}|Q(s,x,v)|\in\cL^2,\\
 \omega_Q(s,a)&:=\sup_{\substack{x,y\in D,\ |x-y|\le a\\v\in U}}
           |Q(s,x,v)-Q(s,y,v)|\longrightarrow0
           \quad\text{a.e. as }a\downarrow0,
\end{aligned}
\]
with $\omega_Q(s,a)\le2K_Q(s)$. Splitting according to
$\{\Delta_n\le a\}$ and then letting
$n\to\infty$ and $a\downarrow0$ yields
\begin{equation}\label{eq:classical-frozen-error}
 E\left[1_A\int_\tau^{\eta_h^{\theta^n}}
 |Q(s,X_s^n,\theta_s^n)-Q(s,Y_s^n,\theta_s^n)|\,ds\right]
 \longrightarrow0.
\end{equation}
This is the same modulus argument as in the proof of
Theorem~\ref{thm-existence}; it uses no regularity of the
selector as a function of $x$.

Since $R_\phi=-\inf_vQ_\phi\ge2\eps$ on the stopped cylinder,
apply this construction to $Q_\phi$. Then
$Q_\phi(s,Y_s^n,\theta_s^n)\le-\eps$ before the stop, and
 \eqref{eq:classical-duration},
\eqref{eq:classical-upper-bound} and \eqref{eq:classical-frozen-error}
yield
\[
 0\le E\left[1_A\int_\tau^{\eta_h^{\theta^n}}
       Q_\phi(s,X_s^n,\theta_s^n)\,ds\right]
 \le-\eps m+o(1).
\]
This is impossible for large $n$. Thus $u$ is a viscosity subsolution.

For a lower-contact test, suppose instead that
$R_\phi\le-2\eps$ on such a stopped cylinder. Then
$Q_\phi(s,x,v)\ge2\eps$ for every $v$ there.
The same localization gives \eqref{eq:classical-duration}; now
\eqref{eq:classical-contact-Ito} has a nonpositive left side.
Select $\theta^n$ using $Q_u$, whose infimum is zero by
\eqref{eq:classical-Q-zero}. Then
$Q_u(s,Y_s^n,\theta_s^n)\le\eps$, and
\eqref{eq:classical-frozen-error} gives
\[
 2\eps E[1_A(\eta_h^{\theta^n}-\tau)]
 \le E\left[1_A\int_\tau^{\eta_h^{\theta^n}}Q_\phi\,ds\right]
 \le E\left[1_A\int_\tau^{\eta_h^{\theta^n}}Q_u\,ds\right]
 \le\eps E[1_A(\eta_h^{\theta^n}-\tau)]+o(1),
\]
contradicting \eqref{eq:classical-duration}. This proves the
supersolution condition.
\end{proof}
 
\end{appendix}

\bibliographystyle{siam}

\begin{thebibliography}{10}

\bibitem{bank2026duality}
{\sc P.~Bank, J.~R. Dause, F.~de~Feo, and P.~K. Friz}, {\em Duality for
  stochastic control with non-markovian random coefficients}, arXiv preprint
  arXiv:2609.05101,  (2026).

\bibitem{bauerle2020portfolio}
{\sc N.~B\"auerle and S.~Desmettre}, {\em Portfolio optimization in fractional
  and rough heston models}, SIAM Journal on Financial Mathematics, 11 (2020),
  pp.~240--273.

\bibitem{bayer2023rough}
{\sc C.~Bayer, P.~K. Friz, M.~Fukasawa, J.~Gatheral, A.~Jacquier, and
  M.~Rosenbaum}, {\em Rough volatility}, SIAM, 2023.

\bibitem{bender2016-first-order}
{\sc C.~Bender and N.~Dokuchaev}, {\em A first-order {BSPDE} for swing option
  pricing}, Mathematical Finance, 26 (2016), pp.~461--491.

\bibitem{bensoussan2013mean}
{\sc A.~Bensoussan, J.~Frehse, P.~Yam, et~al.}, {\em Mean field games and mean
  field type control theory}, vol.~101, Springer, 2013.

\bibitem{bouchard-nutz2016-target}
{\sc B.~Bouchard and M.~Nutz}, {\em Stochastic target games and dynamic
  programming via regularized viscosity solutions}, Mathematics of Operations
  Research, 41 (2016), pp.~109--124.

\bibitem{buckdahn2015pathwise}
{\sc R.~Buckdahn, C.~Keller, J.~Ma, and J.~Zhang}, {\em Pathwise viscosity
  solutions of stochastic {PDEs} and forward path-dependent {PDEs}---a rough
  path view}, arXiv:1501.06978,  (2015).

\bibitem{buckdahn2007pathwise}
{\sc R.~Buckdahn and J.~Ma}, {\em Pathwise stochastic control problems and
  stochastic {HJB} equations}, SIAM J. Control Optim., 45 (2007),
  pp.~2224--2256.

\bibitem{cardaliaguet2015master}
{\sc P.~Cardaliaguet, F.~Delarue, J.-M. Lasry, and P.-L. Lions}, {\em The
  master equation and the convergence problem in mean field games},
  arXiv:1509.02505,  (2015).

\bibitem{cardaliaguet2025mean}
{\sc P.~Cardaliaguet, B.~Seeger, and P.~Souganidis}, {\em Mean field games with
  common noise and degenerate idiosyncratic noise}, The Annals of Applied
  Probability, 35 (2025), pp.~1531--1569.

\bibitem{cont2013functional}
{\sc R.~Cont and D.-A. Fourni{\'e}}, {\em Functional {It{\^o}} calculus and
  stochastic integral representation of martingales}, Ann. Probab., 41 (2013),
  pp.~109--133.

\bibitem{crandall1992user}
{\sc M.~G. Crandall, H.~Ishii, and P.-L. Lions}, {\em User's guide to viscosity
  solutions of second order partial differential equations}, Bull. Amer. Math.
  Soc., 27 (1992), pp.~1--67.

\bibitem{crandall2000lp}
{\sc M.~G. Crandall, M.~Kocan, and A.~{\'S}wiech}, {\em L$^p$-theory for fully
  nonlinear uniformly parabolic equations}, Commun. Partial Differ. Equ., 25
  (2000), pp.~1997--2053.

\bibitem{DiestelUhl1977}
{\sc J.~Diestel and J.~J. Uhl, Jr.}, {\em Vector Measures}, vol.~15 of
  Mathematical Surveys, American Mathematical Society, Providence, R.I., 1977.

\bibitem{DuQiuTang10}
{\sc K.~Du, J.~Qiu, and S.~Tang}, {\em $\textrm{L}^p$ theory for
  super-parabolic backward stochastic partial differential equations in the
  whole space}, Appl. Math. Optim., 65 (2011), pp.~175--219.

\bibitem{DuTangZhang-2013}
{\sc K.~Du, S.~Tang, and Q.~Zhang}, {\em $\textrm{W}^{m,p}$-solution ($p\ge 2$)
  of linear degenerate backward stochastic partial differential equations in
  the whole space}, J. Differ. Equ., 254 (2013), pp.~2877--2904.

\bibitem{Du_Zhang_DegSemilin2012}
{\sc K.~Du and Q.~Zhang}, {\em Semi-linear degenerate backward stochastic
  partial differential equations and associated forward-backward stochastic
  differential equations}, Stoch. Process. Appl., 123 (2013), pp.~1616--1637.

\bibitem{ekren2014viscosity}
{\sc I.~Ekren, C.~Keller, N.~Touzi, and J.~Zhang}, {\em On viscosity solutions
  of path dependent {PDEs}}, Ann. Probab., 42 (2014), pp.~204--236.

\bibitem{ekren2016viscosity-1}
{\sc I.~Ekren, N.~Touzi, and J.~Zhang}, {\em Viscosity solutions of fully
  nonlinear parabolic path dependent {PDEs: Part I}}, Ann. Probab., 44 (2016),
  pp.~1212--1253.

\bibitem{ekren2016pseudo}
{\sc I.~Ekren and J.~Zhang}, {\em {Pseudo-Markovian} viscosity solutions of
  fully nonlinear degenerate {PPDEs}}, Probability, Uncertainty and
  Quantitative Risk, 1 (2016), p.~6.

\bibitem{EnglezosKaratzas09}
{\sc N.~Englezos and I.~Karatzas}, {\em Utility maximization with habit
  formation: Dynamic programming and stochastic \textrm{PDEs}}, \textrm{SIAM}
  J. Control Optim., 48 (2009), pp.~481--520.

\bibitem{El_Karoui-reflec-1997}
{\sc N.~E\textrm{l Karoui}, C.~Kapoudjian, E.~{Pardoux}, S.~Peng, and M.~C.
  Quenez}, {\em Reflected solutions of backward \textrm{SDE's}, and related
  obstacle problems for \textrm{PDE's}}, Ann. Probab., 25 (1997), pp.~702--737.

\bibitem{Karoui_Peng_Quenez}
{\sc N.~E\textrm{l Karoui}, S.~Peng, and M.~C. Quenez}, {\em Backward
  stochastic differential equations in finance}, Math. Finance, 7 (1997),
  pp.~1--71.

\bibitem{El-Karoui-Peng-Quenez-2001}
\leavevmode\vrule height 2pt depth -1.6pt width 23pt, {\em A dynamic maximum
  principle for the optimization of recursive utilities under constraints},
  Ann. Appl. Probab., 11 (2001), pp.~664--693.

\bibitem{Hu_Ma_Yong02}
{\sc Y.~Hu, J.~Ma, and J.~Yong}, {\em On semi-linear degenerate backward
  stochastic partial differential equations}, Probab. Theory Relat. Fields, 123
  (2002), pp.~381--411.

\bibitem{juutinen2001definition}
{\sc P.~Juutinen}, {\em On the definition of viscosity solutions for parabolic
  equations}, Proceedings of the American Mathematical Society, 129 (2001),
  pp.~2907--2911.

\bibitem{krylov1999-policies}
{\sc N.~V. Krylov}, {\em Approximating value functions for controlled
  degenerate diffusion processes by using piece-wise constant policies},
  Electron. J. Probab., 4 (1999), pp.~1--19.

\bibitem{Krylov_09}
{\sc N.~V. Krylov}, {\em On the {It\^o}-{W}entzell formula for
  distribution-valued processes and related topics}, Probab. Theory Relat.
  Fields, 150 (2011), pp.~295--319.

\bibitem{kunita1981some}
{\sc H.~Kunita}, {\em Some extensions of {Ito's} formula}, in S{\'e}minaire de
  Probabilit{\'e}s XV 1979/80, Springer, 1981, pp.~118--141.

\bibitem{Leao-etal-2018}
{\sc D.~Le{\~a}o, A.~Ohashi, and A.~B. Simas}, {\em A weak version of
  path-dependent functional it{\^o} calculus}, The Annals of Probability, 46
  (2018), pp.~3399--3441.

\bibitem{liang2026viscosity}
{\sc D.~Liang and Q.~Meng}, {\em Viscosity solutions of stochastic
  hamilton--jacobi--bellman equations with jumps}, arXiv preprint
  arXiv:2605.20593,  (2026).

\bibitem{LionsSouganidis1998b}
{\sc P.~Lions and P.~Souganidis}, {\em Fully nonlinear stochastic partial
  differential equations: Non-smooth equations and applications}, C.R. Acad.
  Sci. paris, 327 (1998), pp.~735--741.

\bibitem{ma2012non}
{\sc J.~Ma, H.~Yin, and J.~Zhang}, {\em On non-{M}arkovian forward--backward
  {SDEs} and backward stochastic {PDEs}}, Stoch. Process. Appl., 122 (2012),
  pp.~3980--4004.

\bibitem{Malliavin-1978}
{\sc P.~Malliavin}, {\em Stochastic calculus of variation and hypoelliptic
  operators}, in Proc. Intern. Symp. SDE Kyoto 1976, Kinokuniya, Tokyo and
  Wiley, New York, 1978, pp.~195--263.

\bibitem{meng2023optimal}
{\sc Q.~Meng, Y.~Dong, Y.~Shen, and S.~Tang}, {\em Optimal controls of
  stochastic differential equations with jumps and random coefficients:
  stochastic hamilton--jacobi--bellman equations with jumps}, Applied
  Mathematics \& Optimization, 87 (2023), p.~3.

\bibitem{Pardoux1979}
{\sc E.~Pardoux}, {\em Stochastic partial differential equations and filtering
  of diffusion processes}, Stoch.,  (1979), pp.~127--167.

\bibitem{Peng_92}
{\sc S.~Peng}, {\em Stochastic {H}amilton-{J}acobi-{B}ellman equations}, {SIAM}
  J. Control Optim., 30 (1992), pp.~284--304.

\bibitem{Peng-DPP-1997}
{\sc S.~Peng}, {\em Backward stochastic differential equations: Stochastic
  optimization theory and viscosity solution for {HJB} equations}, in Topics on
  Stochastic Analysis (in Chinese), J.-A. Yan, S.~Peng, S.~Fang, and L.~Wu,
  eds., Science Press, Beijing, 1997, pp.~85--138.

\bibitem{peng1999open}
{\sc S.~Peng}, {\em Open problems on backward stochastic differential
  equations}, in Control of Distributed Parameter and Stochastic Systems,
  Springer, 1999, pp.~265--273.

\bibitem{peng2011backward}
\leavevmode\vrule height 2pt depth -1.6pt width 23pt, {\em Backward stochastic
  differential equation, nonlinear expectation and their applications}, in
  Proceedings of the International Congress of Mathematicians, 2010,
  pp.~393--432.

\bibitem{peng2011note}
\leavevmode\vrule height 2pt depth -1.6pt width 23pt, {\em Note on viscosity
  solution of path-dependent {PDE and G-martingales}}, arXiv:1106.1144,
  (2011).

\bibitem{picarelli-reisinger2020-error}
{\sc A.~Picarelli and C.~Reisinger}, {\em Probabilistic error analysis for some
  approximation schemes to optimal control problems}, Systems \& Control Lett.,
  137 (2020), p.~104619.

\bibitem{pP05}
{\sc P.~E. Protter}, {\em Stochastic Integration and Differential Equations},
  Springer, Berlin Heidelberg, 2005.

\bibitem{Qiu2014weak}
{\sc J.~Qiu}, {\em Weak solution for a class of fully nonlinear stochastic
  hamilton--jacobi--bellman equations}, Stoch. Process. Appl., 127 (2017),
  pp.~1926--1959.

\bibitem{Qiu2018viscosity}
\leavevmode\vrule height 2pt depth -1.6pt width 23pt, {\em Viscosity solutions
  of stochastic {Hamilton--Jacobi--Bellman} equations}, SIAM J. Control Optim.,
  56 (2018), pp.~3708--3730.

\bibitem{QiuTangMPBSPDE11}
{\sc J.~Qiu and S.~Tang}, {\em Maximum principles for backward stochastic
  partial differential equations}, J. Funct. Anal., 262 (2012), pp.~2436--2480.

\bibitem{QiuWei-RBSPDE-2013}
{\sc J.~Qiu and W.~Wei}, {\em On the quasi-linear reflected backward stochastic
  partial differential equations}, J. Funct. Anal., 267 (2014), pp.~3598--3656.

\bibitem{qiu2019uniqueness}
{\sc J.~Qiu and W.~Wei}, {\em Uniqueness of viscosity solutions of stochastic
  hamilton-jacobi equations}, Acta Mathematica Scientia, 39 (2019),
  pp.~857--873.

\bibitem{QiuZhang2023games}
{\sc J.~Qiu and J.~Zhang}, {\em Stochastic differential games with random
  coefficients and stochastic hamilton--jacobi--bellman--isaacs equations}, The
  Annals of Applied Probability, 33 (2023), pp.~889--930.

\bibitem{Tang_98}
{\sc S.~Tang}, {\em The maximum principle for partially observed optimal
  control of stochastic differential equations}, \textrm{SIAM} J. Control
  Optim., 36 (1998), pp.~1596--1617.

\bibitem{Tang-Wei-2013}
{\sc S.~Tang and W.~Wei}, {\em On the cauchy problem for backward stochastic
  partial differential equations in {H{\"o}lder} spaces}, Ann. Probab., 44
  (2016), pp.~360--398.

\bibitem{tang2012path}
{\sc S.~Tang and F.~Zhang}, {\em Path-dependent optimal stochastic control and
  viscosity solution of associated {Bellman} equations}, Discrete and
  Continuous Dynamical Systems - Series A, 35 (2015), pp.~5521--5553.

\bibitem{van-neerven-veraar-weis2007}
{\sc J.~M. A.~M. van Neerven, M.~C. Veraar, and L.~Weis}, {\em Stochastic
  integration in {UMD} banach spaces}, Ann. Probab., 35 (2007), pp.~1438--1478.

\bibitem{wagner1977survey}
{\sc D.~H. Wagner}, {\em Survey of measurable selection theorems}, SIAM J.
  Control Optim., 15 (1977), pp.~859--903.

\bibitem{wang1992-II}
{\sc L.~Wang}, {\em On the regularity theory of fully nonlinear parabolic
  equations: {II}}, Commun. Pure Appl. Math., 45 (1992), pp.~141--178.

\bibitem{ye2025fractional}
{\sc Y.~Ye, Y.~Li, and S.~Tang}, {\em Fractional backward stochastic partial
  differential equations with applications to stochastic optimal control of
  partially observed systems driven by l{\'e}vy processes}, SIAM Journal on
  Control and Optimization, 63 (2025), pp.~1314--1347.

\bibitem{yong-zhou1999stochastic}
{\sc J.~Yong and X.~Y. Zhou}, {\em Stochastic controls: {Hamiltonian systems}
  and {HJB} equations}, vol.~43, Springer Science \& Business Media, 1999.

\bibitem{zhou2023viscosity}
{\sc J.~Zhou}, {\em Viscosity solutions to second order path-dependent
  hamilton--jacobi--bellman equations and applications}, The Annals of Applied
  Probability, 33 (2023), pp.~5564--5612.

\bibitem{Zhou_93}
{\sc X.~Zhou}, {\em On the necessary conditions of optimal controls for
  stochastic partial differential equations}, \textrm{SIAM} J. Control Optim.,
  31 (1993), pp.~1462--1478.

\end{thebibliography}

\end{document}